\documentclass[12pt]{amsart}
\usepackage[latin1]{inputenc}
\usepackage[T1]{fontenc}
\usepackage{amsmath, amssymb}
\usepackage{amsthm}
\usepackage{lastpage}
\usepackage{fancyhdr}
\usepackage{graphicx}
\usepackage{bbm}
\usepackage{empheq}
\usepackage{geometry}
\usepackage[matrix, arrow, curve]{xy}
\usepackage{color}
\definecolor{darkblue}{rgb}{0,0,0.6}
\usepackage[ocgcolorlinks,colorlinks=true, citecolor=darkblue, filecolor=darkblue,
linkcolor=darkblue, urlcolor=darkblue]{hyperref}
\usepackage[nameinlink,capitalise,noabbrev]{cleveref}
\numberwithin{equation}{section}
\setcounter{tocdepth}{1}

\DeclareMathOperator{\asdim}{asdim}

\DeclareMathOperator*{\supp}{supp}

\DeclareMathOperator{\diam}{diam}
\DeclareMathOperator{\pr}{pr}
\DeclareMathOperator{\Map}{Map}

\newcommand{\mcn}{\mathfrak{n}}
\newcommand{\mcp}{\mathfrak{p}}

\newcommand{\bbF}{\mathbbm{F}}
\newcommand{\bbZ}{\mathbbm{Z}}
\newcommand{\bbK}{\mathbbm{K}}

\newcommand{\bbN}{\mathbbm{N}}
\newcommand{\bbL}{\mathbbm{L}}

\newcommand{\mcC}{\mathcal{C}}

\newcommand{\mcD}{\mathcal{D}}

\newcommand{\mcS}{\mathcal{S}}
\newcommand{\mcB}{\mathcal{B}}
\newcommand{\mcA}{\mathcal{A}}

\newcommand{\mcU}{\mathcal{U}}
\newcommand{\mcF}{\mathcal{F}}

\newcommand{\mcT}{\mathcal{T}}

\DeclareMathOperator{\im}{im}

\renewcommand{\phi}{\varphi}
\DeclareMathOperator*{\colim}{colim}
\numberwithin{equation}{section}
\newtheorem{thm}[equation]{Theorem}
\newtheorem{prop}[equation]{Proposition}
\newtheorem{cor}[equation]{Corollary}
\newtheorem{lemma}[equation]{Lemma}

\newtheorem*{thm*}{Theorem}
\newtheorem{thmA}{Theorem}

\newtheorem*{cor*}{Corollary}
\newtheorem*{prop*}{Proposition}
\newtheorem*{lemma*}{Lemma}
\newtheorem*{mconj*}{Meta-Conjecture}
\newtheorem*{conj*}{Conjecture}
\newtheorem{cor+}{Corollary}
\theoremstyle{definition}

\newtheorem{defi}[equation]{Definition}
\newtheorem{rem}[equation]{Remark}
\newtheorem{nota}[equation]{Notation}

\newtheorem*{bsp*}{Example}
\newtheorem*{example*}{Example}
\newtheorem*{defi*}{Definition}
\newtheorem*{rem*}{Remark}
\newtheorem*{nota*}{Notation}
\begin{document}
\title{On the K-theory of groups with Finite Decomposition Complexity}
\author{Daniel Kasprowski}
\date{\today}
\subjclass[2010]{19D50 (Primary) 19G24, 20F69 (Secondary)}

\address{Fachbereich Mathematik und Informatik,
 Westf\"{a}lische Wilhelms-Universit\"{a}t M\"{u}nster,
 Einsteinstr. 62, D-48149 M\"{u}nster, Germany}
\email{daniel.kasprowski@math.uni-muenster.de}

\thanks{This work was supported by the SFB 878
 ``Groups, Geometry and Actions''.}

\begin{abstract}
It is proved that the assembly map in algebraic $K$- and $L$-theory with respect to the family of finite subgroups is injective for groups $\Gamma$ with finite quotient finite decomposition complexity (a strengthening of finite decomposition complexity introduced by Guentner, Tessera and Yu) that admit a finite dimensional model for $\underbar E\Gamma$ and have an upper bound on the order of their finite subgroups. In particular, this applies to finitely generated linear groups over fields with characteristic zero with a finite dimensional model for $\underbar E\Gamma$.
\end{abstract}
\maketitle
\section*{Introduction}
\noindent Assembly maps in algebraic $K$- and $L$-theory are a useful tool to study the $K$-~and $L$-theory of group rings $R[\Gamma]$. This is important for understanding geometric properties of manifolds with fundamental group $\Gamma$.

More precisely, for every ring $R$ and any group $\Gamma$ we can consider the functor $\bbK_R\colon Or\Gamma\to \mathfrak{Spectra}$ from the orbit category of $\Gamma$ to the category of spectra sending $\Gamma/H$ to (a spectrum weakly equivalent to) the $K$-theory spectrum $\bbK(R[H])$ for every subgroup $H\leq\Gamma$. For any such functor $F\colon Or\Gamma\to \mathfrak{Spectra}$ a $\Gamma$-homology theory $\bbF$ can be constructed via
\[\bbF(X):=\Map_\Gamma(\_,X_+)\wedge_{Or\Gamma}F,\]
see \cite{davislueck}. We will denote its homotopy groups by $H_n^\Gamma(\_,F):=\pi_n\bbF(X)$. The assembly map for the family of finite subgroups is the map
\begin{equation}
H_n^\Gamma(\underbar E\Gamma;\bbK_R)\to H^\Gamma_n(pt;\bbK_R)\cong K_n(R[\Gamma])\label{eq1}
\end{equation}
induced by the map $\underbar E\Gamma\to pt$.\\

Finite decomposition complexity, introduced by Guentner, Tessera and Yu in \cite{fdc} and \cite{rigidity}, is a generalization of finite asymptotic dimension. In \cite{k-theory} Ramras, Tessera and Yu prove the injectivity of the assembly map for algebraic $K$-theory for groups $\Gamma$ with finite decomposition complexity and finite classifying space $B\Gamma$. In this article we use the methods of Bartels and Rosenthal \cite{bartels} to relax the finiteness assumption on $B\Gamma$ to allow for groups with torsion. To do so we need the stronger notion of finite quotient finite decomposition complexity. Our main result is the following theorem.
\begin{thmA}
\label{thmA}
Let $\Gamma$ be a discrete group with finite quotient finite decomposition complexity and a global upper bound on the order of its finite subgroups and let $R$ be a ring. Assume there exists a finite dimensional model for $\underbar E\Gamma$. Then the assembly map 
\[H_*^\Gamma(\underbar E\Gamma;\bbK_R)\rightarrow K_*(R[\Gamma])\]
in algebraic $K$-theory is a split injection.
\end{thmA}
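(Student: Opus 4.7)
The plan is to follow the Bartels--Rosenthal framework \cite{bartels} for split injectivity of the $K$-theoretic assembly map, upgrading their finite asymptotic dimension hypothesis to FQFDC. Concretely, I would associate to $\Gamma$ and the finite-dimensional model for $\underbar E\Gamma$ an equivariant continuously controlled $K$-theory spectrum $\bbK^\Gamma_R(\underbar E\Gamma\times[1,\infty))$, with Bartels--Reich metric control at the open end, and use the Davis--L\"{u}ck machinery to factor the assembly map \eqref{eq1} through this spectrum. The bound on the orders of finite subgroups enters both to control the coefficient rings that appear in the controlled algebra and to ensure that a cellular transfer map is well-defined. Producing a splitting of the assembly map is then reduced, in the standard way, to showing that a certain ``forget control'' map is split injective.

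Next, I would exploit the finite-dimensionality of $\underbar E\Gamma$ by filtering the controlled spectrum along the skeleta of $\underbar E\Gamma$. An induction on dimension, combined with controlled Mayer--Vietoris, reduces the statement to the vanishing of a continuously controlled equivariant $K$-theory of $\Gamma$ itself, with coefficients in twisted group rings of the finite stabilizers. The uniform bound on the orders of the finite stabilizers is what keeps these coefficient rings uniformly under control throughout the reduction, and also ensures that only finitely many isomorphism types of stabilizer subgroups have to be tracked.

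The heart of the argument is then a transfinite induction along the FDC hierarchy. The base case, where the metric family is bounded, is trivial. For the inductive step, the defining property of FQFDC supplies, at each scale $R$, a decomposition of $\Gamma$ into two subfamilies whose quotients lie in a strictly lower class of the hierarchy; fed into a controlled Mayer--Vietoris sequence this yields the desired vanishing from the inductive hypothesis, very much in the spirit of Ramras--Tessera--Yu \cite{k-theory} for torsion-free groups, but now taking place in the equivariant continuously controlled setting over $\underbar E\Gamma\times [1,\infty)$.

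The main technical difficulty I anticipate is the compatibility between the metric decompositions supplied by FQFDC and the equivariant controlled algebra. In the torsion-free setting of \cite{k-theory} one works with $\Gamma$ directly; here the presence of finite subgroups forces the decompositions to interact well with taking quotients, which is precisely what the \emph{quotient} strengthening of FDC is designed to provide. Translating the set-theoretic decompositions of $\Gamma$ into honest cofiber sequences of equivariant continuously controlled $K$-theory spectra of $\underbar E\Gamma\times [1,\infty)$, in a way that is compatible with the cellular transfer coming from the first step, is the part I expect to consume the bulk of the technical effort and is the place where FQFDC is used in its full strength.
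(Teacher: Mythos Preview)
Your proposal has the right overall shape but misses the specific mechanisms that make the argument work in the presence of torsion.

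First, the paper does not filter by skeleta of $\underbar E\Gamma$. Instead, for each finite subgroup $G\leq\Gamma$ it filters $X=\underbar E\Gamma$ by unions of fixed-point sets $Z_k^G = X^{\mcS_k^G}$, where $\mcS_k^G$ runs through an increasing chain of conjugation-closed families of subgroups of $G$. The bound on the order of finite subgroups guarantees a \emph{uniform} length $m$ for these filtrations across all finite $G\leq\Gamma$; this, and not merely bounding coefficient rings, is its real role. The point of the isotropy stratification is that on each stratum the $G$-equivariant controlled category becomes equivalent to a \emph{non-equivariant} controlled category over the \emph{quotient}, namely $\mcA^G_\Gamma(Z_k^G,Z_{k-1}^G)\simeq \mcA[H_k^G]_c(G\backslash Z_k^G,G\backslash Z_{k-1}^G)$. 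Only after this reduction does FQFDC enter: it supplies FDC for the disjoint unions $\coprod_i G_i\backslash(\cdot)$ of these quotients. A skeletal filtration does not produce such quotients, so your proposed reduction ``to a continuously controlled equivariant $K$-theory of $\Gamma$ itself'' does not connect to the quotient hypothesis built into FQFDC.

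Second, split injectivity is obtained not by directly factoring the assembly map through a single controlled spectrum, but via a \emph{bounded Descent Principle}: one must show $\colim_K\pi_n\bbK\bigl(\prod^{bd}_{J}\mcA_\Gamma(\Gamma K)\bigr)^\Gamma = 0$ for every $\Gamma$-set $J$ with finite stabilizers, where the colimit is over finite subcomplexes $K\subseteq X$. The bounded products and the construction of the bounded mapping space $\Map^{bd}_\Gamma(\underbar E\Gamma,\bbK\mcA)$ are substantial ingredients you do not mention. Third, the FDC induction of Ramras--Tessera--Yu is not rerun equivariantly over $\underbar E\Gamma\times[1,\infty)$; rather, after the reduction above one compares $G_i\backslash(Z_k^{G_i}\cap\Gamma K)$ with Rips complexes $P_s(S_i)$ of bounded-geometry subsets of $G_i\backslash\Gamma x$, using relative uniform $\mcF in$-contractibility of $\underbar E\Gamma$, and then invokes the RTY vanishing theorem as a black box on those Rips complexes.
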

In \cref{proof} \cref{main} is proved. This is a slightly stronger version than \cref{thmA}. In \cref{main} the assembly map is proved to be injective for every small additive $\Gamma$-category $\mcA$. \cref{thmA} follows from this by taking $\mcA$ to be (a small skeleton of) the category of finitely generated free $R$-modules with trivial $\Gamma$-action.\\
In \cref{ltheory} an $L$-theoretic version of \cref{main} is proved.\\
~\\
If $\Gamma$ is torsion-free, then having finite quotient finite decomposition complexity is the same as having finite decomposition complexity. So we immediately get the following.
\begin{cor+}
Let $\Gamma$ be a discrete group with finite decomposition complexity that admits a finite dimensional model for $E\Gamma$ (in particular $\Gamma$ is torsion-free). Then the assembly map \eqref{eq1} is split injective for every ring $R$.
\end{cor+}
Since groups with finite asymptotic dimension have finite quotient finite decomposition complexity, we also get the following strengthening of Theorem A of \cite{bartels}.
\begin{cor+}
Let $\Gamma$ be a discrete group with finite asymptotic dimension and a global upper bound on the order of its finite subgroups. Assume there exists a finite dimensional model for $\underbar E\Gamma$. Then the assembly map \eqref{eq1} is split injective for every ring $R$.
\end{cor+}
In \cite{bartels} no assumption on the order of the finite subgroups is made but there is a gap in the proof of Proposition 7.4., namely wedge products and homotopy fixed points are interchanged, at this point Lemma 7.1(i) can not be applied as stated. Therefore, the proofs of the main results from \cite{bartels} are only correct under the additional assumption that there exists a cocompact model for $\underbar E\Gamma$. In \cref{des} we prove a bounded version of the Descent Principle. This can be used to fix the proofs of the main theorems of \cite{bartels} under the additional assumption of an upper bound on the order of finite subgroups. If the proof of the Descent Principle 7.5 in \cite{bartels} can be fixed without further assumptions, then our main theorem holds without the assumption on the order of the finite subgroups.\\
\\
In \cite{fdc} it is shown that linear groups have finite decomposition complexity and a slight modification of this proof will yield that they even have finite quotient finite decomposition complexity, as we will show in \cref{groups}.  By Selberg's Lemma \cite{selberg} a finitely generated linear group over a field of characteristic zero is virtually torsion-free and hence contains a torsion-free normal subgroup of finite index. Thus, it has an upper bound on its finite subgroups. By Alperin and Shalen \cite{alperin} it was proved that a finitely generated subgroup of a linear group over a field of characteristic zero has finite virtual cohomological dimension if and only if there is a global upper bound on the Hirsch rank of its unipotent subgroups. Furthermore, if a group has finite virtual cohomological dimension it admits a finite dimensional model for $\underbar E\Gamma$ by \mbox{\cite[Theorem 6.4]{MR1757730}}. Therefore, we get the following corollary:
\begin{cor+}
Let $F$ be a field of characteristic zero, $\Gamma$ a finitely generated subgroup of $GL_n(F)$ with a global upper bound on the Hirsch rank of its unipotent subgroups. Then the $K$-theoretic assembly map \eqref{eq1} is split injective for every ring $R$.
\end{cor+}
A finitely generated linear group $\Gamma$ over a field of positive characteristic has finite asymptotic dimension by \cite[Theorem 3.1]{rigidity} and $\underbar E\Gamma$ admits a finite dimensional model by \cite[Corollary 5]{poschar}.
\begin{cor+}
Let $F$ be a field of positive characteristic, $\Gamma$ a finitely generated subgroup of $GL_n(F)$. Suppose $\Gamma$ has an upper bound on the order of its finite subgroups, then the $K$-theoretic assembly map \eqref{eq1} is split injective for every ring $R$.
\end{cor+}
\textbf{Acknowledgments:} This article is part of my PhD thesis. I would like to thank Fabian Hebestreit, Christoph Winges and especially my supervisor Arthur Bartels for many helpful discussions. I would also like to thank the referee for his suggestions improving this article.
\tableofcontents
~\\
\begin{nota*}~
\begin{itemize}
\item $\Gamma$ will always denote a group, and all groups will be countable and discrete.
\item $\mcA$ will always denote an additive category, and all categories will be small.
\item Metrics are allowed to take on the value $\infty$ and a \emph{metric $\Gamma$-space} will always be a metric space with an isometric (left) $\Gamma$-action. 
\item For a metric space $X$, a subspace $Y\subseteq X$ and $R>0$ define
\[Y^R:=\{x\in X\mid d(x,Y)<R\}.\]
\item For metric spaces $\{(X_i,d_i)\}_{i\in I}$ we define $\coprod_{i\in I}X_i$ to be the formal set-theoretic disjoint union of the spaces $X_i$, i.e. points in $\coprod_{i\in I}X_i$ are pairs $(x,i)$ with $x\in X_i$ and we give this space the metric
\[d(x,i),(y,j))=\left\{\begin{matrix} d_{i}(x,y)&\text{if }i=j\\\infty&\text{else}\end{matrix}\right.\]
\end{itemize}
\end{nota*}
\section{Metric properties of \texorpdfstring{\underline E$\Gamma$}{EG}}
In this section we will first show that any discrete, countable group $\Gamma$ that admits a finite dimensional model for $\underbar E\Gamma$ also admits a finite dimensional simplicial model with a proper $\Gamma$-invariant metric.
\begin{defi}
A metric space $(X,d)$ (resp. the metric $d$) is called \emph{proper} if for every $R>0$ and every $x\in X$ the closed ball $\bar B_R(x):=\{y\in X\mid d(x,y)\leq R\}\subseteq X$ is compact.\\
A metric $d$ on $X$ is called \emph{finite} if $d(x,y)<\infty$ for all $x,y\in X$.
\end{defi}
\begin{lemma}
\label{metric1}
Let $X$ be a finite dimensional $\Gamma$-CW-complex with countably many cells. Then $X$ is $\Gamma$-homotopy equivalent to a (countable) simplicial $\Gamma$-CW-complex of the same dimension.
\end{lemma}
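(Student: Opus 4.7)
The plan is to proceed by induction on the dimension $n$ of $X$. The base case $n=0$ is immediate: a $0$-dimensional $\Gamma$-CW-complex is a disjoint union of orbits $\Gamma/H_i$, which is already a $0$-dimensional simplicial $\Gamma$-complex with countably many vertices.

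For the inductive step, I would write $X$ as the pushout
\[X = X^{n-1} \cup_{\coprod_i \Gamma/H_i \times S^{n-1}} \coprod_i \Gamma/H_i \times D^n\]
over countably many equivariant attaching maps, and apply the inductive hypothesis to obtain a $\Gamma$-homotopy equivalence $f\colon X^{n-1} \to Y^{n-1}$ with $Y^{n-1}$ an $(n-1)$-dimensional simplicial $\Gamma$-CW-complex. Since $\Gamma$ acts trivially on the factor $D^n$ of each cell, each equivariant attaching map is determined by a map $\phi_i\colon S^{n-1}\to (X^{n-1})^{H_i}$ into the fixed-point set. After replacing $Y^{n-1}$ by its second barycentric subdivision, all the fixed-point sets $(Y^{n-1})^{H}$ simultaneously become honest subcomplexes. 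I would then invoke the classical simplicial approximation theorem, after a sufficiently fine subdivision of $S^{n-1}$, to replace each composite $f\circ\phi_i\colon S^{n-1}\to (Y^{n-1})^{H_i}$ by a homotopic simplicial map $\tilde\phi_i$. Attaching $\Gamma/H_i\times\Delta^n$ (with $\Delta^n$ its standard simplicial structure) along the $\Gamma$-equivariant extensions of the $\tilde\phi_i$ produces an $n$-dimensional simplicial $\Gamma$-CW-complex $Y$, and the homotopies between $\phi_i$ and $\tilde\phi_i$ assemble into a $\Gamma$-homotopy equivalence $X\simeq_\Gamma Y$ by the standard pushout argument.

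The main technical obstacle I anticipate is coordinating the simplicial approximation across countably many attaching maps while preventing the dimension from growing. Dimension preservation is actually automatic, since subdivision does not raise dimension and the inductive step only attaches $n$-cells on top of an $(n-1)$-dimensional base. The coordination across the countably many stabilizers $H_i$ is handled by a single double barycentric subdivision of $Y^{n-1}$, after which every fixed-point set is simultaneously a subcomplex, so the simplicial approximation theorem applies uniformly to each $f\circ\phi_i$. Countability of the resulting simplicial $\Gamma$-CW-complex $Y$ follows because $X$ has only countably many cells and each subdivision of a finite simplex produces only finitely many new simplices.
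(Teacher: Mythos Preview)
Your approach is essentially the one the paper has in mind: the paper does not give a self-contained argument but simply cites Hatcher~2C.5 for the non-equivariant case and asserts that the construction there can be carried out $\Gamma$-equivariantly. Hatcher's proof is precisely the skeleton-by-skeleton induction using simplicial approximation of attaching maps that you outline, so your proposal is the intended equivariant adaptation.

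One technical point deserves care. Your sentence ``attaching $\Gamma/H_i\times\Delta^n$ along the $\Gamma$-equivariant extensions of the $\tilde\phi_i$ produces an $n$-dimensional simplicial $\Gamma$-CW-complex'' is not literally correct: gluing a standard simplex along a simplicial map $\tilde\phi_i\colon\partial\Delta^n\to Y^{n-1}$ that is not injective does not in general yield a simplicial complex. What Hatcher actually does (and what you should do equivariantly) is replace the adjunction space by the mapping cone of $\tilde\phi_i$, which is homotopy equivalent to it, and then triangulate this mapping cone using the standard fact that the mapping cylinder of a simplicial map $K\to L$ is a simplicial complex containing $L$ and the barycentric subdivision of $K$ as subcomplexes. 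With this correction your inductive step goes through, the dimension stays at $n$, and countability is preserved for the reasons you give.
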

This lemma is stated in \cite{mislin} and is proved in \cite[2C.5]{hatcher} for $\Gamma=\{e\}$. The construction from this proof can be done in an equivariant fashion.
\begin{lemma}
\label{metric2}
Let $X$ be a finite dimensional, countable (simplicial) $\Gamma$-CW-complex with finite stabilizers. Then $X$ is $\Gamma$-homotopy equivalent to a locally finite, finite dimensional, countable (simplicial) $\Gamma$-CW-complex.
\end{lemma}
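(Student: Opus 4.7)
The plan is to present $X$ as the (equivariant) homotopy colimit of an exhausting sequence of cocompact subcomplexes, each of which is automatically locally finite, and then to take an equivariant mapping telescope as the desired locally finite model.

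Since $X$ has countably many cells, enumerate its $\Gamma$-orbits of (open) simplices as $\Gamma\sigma_1,\Gamma\sigma_2,\ldots$. For each $n$ let $X_n\subseteq X$ be the smallest $\Gamma$-invariant simplicial subcomplex containing $\sigma_1,\ldots,\sigma_n$. Then $X_n$ is a $\Gamma$-CW-subcomplex with only finitely many $\Gamma$-orbits of cells, so $\Gamma\backslash X_n$ is a finite complex and the action of $\Gamma$ on $X_n$ is cocompact. Since all cell stabilizers are finite, the action is also proper, and the key observation is that such a proper cocompact $\Gamma$-CW-complex is locally finite: around any point $y\in X_n$ only finitely many $\Gamma$-orbits of cells have closure meeting $y$ (by cocompactness), and within each such orbit compactness of a closed cell $\bar\sigma$ together with discreteness of $\Gamma y$ shows that only finitely many translates $\gamma\sigma$ contain $y$ in their closure. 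Hence each $X_n$ is a locally finite, cocompact, finite dimensional simplicial $\Gamma$-CW-complex.

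Next, form the $\Gamma$-equivariant mapping telescope
\[T:=\hocolim\bigl(X_1\hookrightarrow X_2\hookrightarrow\cdots\bigr),\]
with $\Gamma$ acting trivially on the interval direction. Because every inclusion $X_n\hookrightarrow X_{n+1}$ is a $\Gamma$-cofibration of simplicial $\Gamma$-CW-complexes and $\bigcup_n X_n=X$, the canonical projection $T\to\colim_n X_n=X$ is a $\Gamma$-homotopy equivalence. The space $T$ has dimension $\dim X+1$, is countable, and is locally finite since every point lies in at most two consecutive cocompact layers $X_n\times[n,n+1]$ and $X_{n+1}\times[n+1,n+2]$, each of which is locally finite. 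A standard $\Gamma$-equivariant prism triangulation of each $\sigma\times[n,n+1]$ turns $T$ into a simplicial $\Gamma$-CW-complex with the required properties.

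The main technical point is the local finiteness of $X_n$: this is where the finite-stabilizer hypothesis is essential, because without properness of the action the orbit $\Gamma y$ could accumulate and the star of $y$ would potentially meet infinitely many cells within a single $\Gamma$-orbit. Once this is established the remainder is the standard mapping-telescope argument together with a routine triangulation of prisms.
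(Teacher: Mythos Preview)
Your proof is correct and follows essentially the same strategy as the paper: exhaust $X$ by cocompact $\Gamma$-subcomplexes and take the equivariant mapping telescope, which is locally finite because of the finite-stabilizer hypothesis, and then triangulate the prisms for the simplicial statement. The only difference is cosmetic: the paper verifies that the projection $T\to X$ is a $\Gamma$-homotopy equivalence by directly checking weak equivalences on all fixed-point sets and invoking the equivariant Whitehead theorem, whereas you appeal to the general fact that the telescope of a sequence of $\Gamma$-cofibrations is $\Gamma$-homotopy equivalent to its colimit.
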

\begin{proof}
Let $\{\sigma_n\}_{n\in\bbN}$ be an enumeration of the $\Gamma$-cells of $X$. For $t\geq 0$ let $X_t$ be the smallest subcomplex of $X$ containing $Y_t:=\{\sigma_n\mid n\leq \lfloor t\rfloor\}$. Since $Y_t$ contains only finitely many $\Gamma$-cells, $X_t$ is finite for every $t\geq 0$ as well.\\
The mapping telescope $T:=\{(x,t)\in X\times [0,\infty)\mid x\in X_t\}$ is a $\Gamma$-CW complex and since $X$ has only finite stabilizers, $T$ is locally finite.\\
The natural projection $p:T\rightarrow X$ is bijective on $\pi_0$. Let $x_0\in X$ and choose $t>0$ with $x_0\in X_t$. Let $f:(S^n,pt)\rightarrow (X,x_0)$ be a pointed map. There exists $t'$ with $f(S^n)\subseteq X_{t'}$. The inclusion $X_{t'}\times\{t'\}\subseteq T$ gives a map \mbox{$g':(S^n,pt)\rightarrow (T,(x_0,t'))$} with $p\circ g'=f$. Using the linear path from $(x_0,t)$ to $(x_0,t')$ we can construct a map \mbox{$g:(S^n,pt)\rightarrow (T,(x_0,t))$} with $p\circ g\simeq f$. Therefore, $p$ is surjective on $\pi_n$. A similar argument shows that $p$ is injective on $\pi_n$ as well.\\
By the same argument for each subgroup $G\leq \Gamma$ the projection of the fixed point spaces $p:T^G\rightarrow X^G$ is an isomorphism for all homotopy groups.\\
Since both $T$ and $X$ are $\Gamma$-CW complexes, the map $p$ is a $\Gamma$-homotopy equivalence by \cite[Propostion II.2.7]{tomdieck}.\\
If $X$ is simplicial, then there is a simplicial structure on $T$ with vertices $(v,n)$, where $n\in\bbN$ and $v$ a vertex of $X_n$. This comes from triangulating each prism $\sigma\times[n,n-1]$, where $\sigma$ is a simplex of $X_n$. 
\end{proof}
\begin{lemma}
\label{countable}
If $X$ is a model for $\underbar E\Gamma$, then $X$ contains a countable $\Gamma$-subcomplex which is still a model for $\underbar E\Gamma$.
\end{lemma}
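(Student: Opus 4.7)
The plan is to build the desired subcomplex by a standard inductive diagonalization. Since $\Gamma$ is countable, the collection of finite subgroups of $\Gamma$ is countable; enumerate them as $H_1, H_2, \ldots$. Because $X$ is a model for $\underline E\Gamma$, each $X^{H_j}$ is contractible, hence nonempty, so we may pick a point $x_j \in X^{H_j}$. Let $Y_0$ be the smallest $\Gamma$-subcomplex of $X$ containing the orbits $\Gamma\cdot x_j$ for all $j$. Since each $x_j$ lies in a single cell and its closure involves only finitely many lower-dimensional cells, $Y_0$ has countably many $\Gamma$-cells, and since stabilizers in $X$ are finite, $Y_0$ itself is countable.

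I would then construct a nested sequence of countable $\Gamma$-subcomplexes $Y_0 \subseteq Y_1 \subseteq Y_2 \subseteq \cdots$ of $X$ such that every homotopy class in $\pi_k(Y_n^{H_j})$ with $j,k \leq n$ becomes nullhomotopic in $Y_{n+1}^{H_j}$. To pass from $Y_n$ to $Y_{n+1}$: for each relevant pair $(j,k)$ note that $Y_n^{H_j}$ is a subcomplex of the $\Gamma$-CW-complex $Y_n$ (fixed points of a finite subgroup of a $\Gamma$-CW-complex form a CW-complex), and a countable CW-complex has countable homotopy groups since any map from $S^k$ has image in a finite subcomplex and there are only countably many homotopy classes of maps between finite CW-complexes. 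For each such homotopy class pick a representative $f\colon S^k\to Y_n^{H_j}$; since $X^{H_j}$ is contractible, $f$ extends to some $F\colon D^{k+1}\to X^{H_j}$. The compact image of $F$ meets only finitely many cells, so adjoining the $\Gamma$-orbits of these cells to $Y_n$ for all countably many data $(j,k,[f])$ enlarges $Y_n$ by countably many cells, yielding a countable $\Gamma$-subcomplex $Y_{n+1}$.

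Set $Y := \bigcup_{n} Y_n$, which is a countable $\Gamma$-subcomplex of $X$. Since $Y\subseteq X$, all stabilizers in $Y$ are finite and $Y^H = \emptyset$ for infinite $H \leq \Gamma$. For any finite subgroup $H = H_j$, the point $x_j$ shows $Y^H$ is nonempty. Given any $k \geq 0$ and any map $g\colon S^k \to Y^H$, the image of $g$ is compact, hence contained in $Y_n^H$ for some $n \geq \max(j,k)$; by construction $g$ is nullhomotopic in $Y_{n+1}^H \subseteq Y^H$. Therefore $\pi_k(Y^H) = 0$ for all $k$, and since $Y^H$ is a CW-complex, it is contractible. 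This gives that $Y$ is a model for $\underline E\Gamma$.

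The main obstacle is the bookkeeping needed to simultaneously handle all finite subgroups and all homotopy dimensions while preserving countability at each stage. The essential input making the diagonalization work is the double countability fact: countably many finite subgroups to treat, and countable homotopy groups for each $Y_n^{H_j}$ because $Y_n$ is countable; this is what keeps each step from blowing up in cardinality.
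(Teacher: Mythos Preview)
Your argument is correct and follows the same overall strategy as the paper: build an increasing chain $Y_0\subseteq Y_1\subseteq\cdots$ of countable $\Gamma$-subcomplexes with nonempty fixed-point sets, arrange that homotopy of the fixed-point sets dies in the colimit, and take the union.

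The one difference is in the inductive step. You kill individual homotopy classes via a diagonalization over pairs $(j,k)$, invoking the fact that a countable CW-complex has countable homotopy groups. The paper instead makes the entire inclusion $X_{i-1}^G\hookrightarrow X_i^G$ nullhomotopic in one stroke: since $X^G$ is contractible, there is a nullhomotopy $X_{i-1}^G\times I\to X^G$, and the image of this map from a countable complex lies in a countable subcomplex of $X$, which (together with the corresponding subcomplexes for all the countably many finite $G$) is adjoined to form $X_i$. This sidesteps the enumeration of homotopy classes and the diagonalization over dimensions; at the end, each $X_i^G$ is contractible in $X_{i+1}^G$, so $\bigcup_i X_i^G$ has trivial homotopy groups directly. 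Your route is a little more hands-on but equally valid; the paper's buys a shorter inductive step at no real cost.
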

\begin{proof}
Because $\Gamma$ has only countably many finite subgroups there exists a countable $\Gamma$-subcomplex $X_0\subseteq X$ with fixed point sets $X_0^G\neq \emptyset$ for all $G\leq\Gamma$ finite.\\
Inductively define countable $\Gamma$-subcomplexes $X_i\subseteq X$ such that $X_{i-1}^G\hookrightarrow X_i^G$ is null homotopic for every finite subgroup $G\leq \Gamma$. Those exist because $X^G$ is contractible and they can be chosen as countable complexes since the image of every contraction of $X_{i-1}^G$ in $X^G$ lies in a countable subcomplex.\\
Since $(\bigcup_{i\in\bbN} X_i)^G=\bigcup_{i\in \bbN} X_i^G$ and $X_i^G$ is contractible in $X_{i+1}^G$, the subcomplex $(\bigcup_{i\in \bbN}X_i)^G$ has vanishing homotopy groups and is therefore contractible. So $\bigcup_{i\in\bbN}X_i$ is a countable subcomplex of $X$ which is still a model for $\underbar E\Gamma$.
\end{proof}
By combining the three lemmas above we get:
\begin{prop}
\label{metric}
If $\Gamma$ admits a finite dimensional model for $\underbar E\Gamma$, then it also admits a finite dimensional simplicial $\Gamma$-CW-model for $\underbar E\Gamma$ with a proper $\Gamma$-invariant metric.
\end{prop}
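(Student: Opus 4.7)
The plan is to chain together the three preceding lemmas and then equip the resulting complex with the standard simplicial metric. Starting from a finite dimensional model $X$ for $\underbar E\Gamma$, I first apply \cref{countable} to pass to a countable $\Gamma$-subcomplex $X_1 \subseteq X$ that is still a model for $\underbar E\Gamma$; by construction $\dim X_1 \leq \dim X < \infty$. Next, \cref{metric1} replaces $X_1$ by a countable simplicial $\Gamma$-CW-complex $X_2$ of the same dimension that is $\Gamma$-homotopy equivalent to $X_1$. To apply \cref{metric2} to $X_2$ I need finite stabilizers, which comes for free: if $G \leq \Gamma$ is infinite then $X_1^G = \emptyset$, and since a $\Gamma$-homotopy equivalence $X_2 \simeq_\Gamma X_1$ restricts on each subgroup to an ordinary homotopy equivalence $X_2^G \simeq X_1^G$, we also have $X_2^G = \emptyset$. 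Therefore \cref{metric2} produces a locally finite, finite dimensional, countable simplicial $\Gamma$-CW-complex $X_3$ which is $\Gamma$-homotopy equivalent to $X_1$. The same fixed-point argument, applied to all subgroups, shows that $X_3^G$ is contractible for $G$ finite and empty for $G$ infinite, so $X_3$ remains a model for $\underbar E\Gamma$.

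To finish, I equip $X_3$ with the length metric obtained by identifying each simplex with the standard Euclidean simplex. Since $\Gamma$ acts by simplicial automorphisms and these are isometries on each closed simplex, the resulting metric is automatically $\Gamma$-invariant. Local finiteness together with finite dimensionality imply that any closed ball of finite radius meets only finitely many simplices, hence lies in a finite and therefore compact subcomplex, so the metric is proper.

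The main point requiring attention is verifying the finite-stabilizer hypothesis of \cref{metric2}; once one observes that this condition is inherited across any $\Gamma$-homotopy equivalence to a model for $\underbar E\Gamma$, the rest is essentially bookkeeping. Each lemma's output satisfies the hypotheses needed for the next, and a locally finite, finite dimensional simplicial $\Gamma$-complex carries a canonical proper $\Gamma$-invariant metric, which is what the proposition asserts.
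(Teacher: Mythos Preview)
Your proposal is correct and follows essentially the same route as the paper: apply \cref{countable}, then \cref{metric1}, then \cref{metric2}, and finally equip the resulting locally finite simplicial complex with the simplicial path metric. You are more explicit than the paper in two places---verifying the finite-stabilizer hypothesis of \cref{metric2} via the fixed-point behavior under $\Gamma$-homotopy equivalence, and arguing why the simplicial path metric is proper---but these are exactly the details the paper suppresses, so there is no genuine difference in strategy.
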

\begin{proof}
By \cref{countable} there exists a countable, finite dimensional $\Gamma$-CW-model for $\underbar E\Gamma$. So by \cref{metric1} and \cref{metric2} the group $\Gamma$ admits a locally finite, finite dimensional, simplicial $\Gamma$-CW-model $X$ for $\underbar E\Gamma$. We can take the unique path metric on $X$ that restricts to the Euclidean metric on every simplex. For a definition of the simplicial path metric see \cite[Section 2]{k-theory}. This metric has the desired properties.
\end{proof}
\subsection*{Uniform contractibility}
In the following we will need some contractibility properties. We will first define uniform contractibility with respect to subspaces and then prove the results about uniform contractibility from \cite{bartels} in the relative case.
\begin{rem}
Let $G$ be a finite group, $X$ a metric $G$-space and $q\colon X\rightarrow G\backslash X$ the quotient map. Then
\[d_{G\backslash X}(y,y'):=d_X(q^{-1}(y),q^{-1}(y'))\quad\text{for}\;y,y'\in G\backslash X\]
defines a metric on $G\backslash X$. We will always consider quotients of metric spaces by an isometric action of a finite group as metric spaces using this metric.
\end{rem}
In \cite[Definition 1.1]{bartels} uniform $\mcF in$-contractibility was defined. We will need a relative version of this definition.
\begin{defi}
Let $X$ be a proper metric $\Gamma$-space, $Y\subseteq X$. $X$ is said to be \emph{uniformly $\mcF in$-contractible with respect to $Y$} if for every $R>0$ there exists an $S>0$ such that the following holds: For every $G\leq\Gamma$ finite and every $G$-invariant subset $B\subseteq X$ of diameter less than $R$ with $B\cap Y^R\neq\emptyset$ the inclusion $B\cap Y^R\hookrightarrow B^S$ is $G$-equivariantly null homotopic.\\
We say that a metric space $X$ is \emph{uniformly contractible with respect to $Y\subseteq X$} if for every $R>0$ there is an $S>0$ such that $B_R(x)\cap Y^R\hookrightarrow B_S(x)$ is null homotopic for every $x\in X$ with $B_R(x)\cap Y^R\neq\emptyset$. \\
Note that every metric space is uniformly contractible with respect to the empty subset.
\end{defi}
In contrast to the definition in \cite{bartels} we require $S$ to be independent of the finite subgroup $G$.\\
In \cite[Lemma 1.5]{bartels} it was proved that if a group $\Gamma$ admits a cocompact model $X$ for $\underbar E\Gamma$, then $X$ is uniformly $\mcF in$-contractible. This can be modified to show the following relative version.
\begin{lemma}
\label{uni1}
Let $X$ be a $\Gamma$-CW-model for $\underbar E\Gamma$ with a proper $\Gamma$-invariant metric and let $K\subseteq X$ be a finite subcomplex. Then $X$ is uniformly $\mcF in$-contractible with respect to $\Gamma K$.
\end{lemma}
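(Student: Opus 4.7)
The idea is to translate $B$ into a bounded neighborhood of $K$, use properness to reduce to finitely many possibilities for $G$, and then invoke equivariant contractibility of $X$.

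Fix $R>0$ and take a finite subgroup $G\leq\Gamma$ together with a $G$-invariant $B\subseteq X$ of diameter less than $R$ meeting $(\Gamma K)^R$. I would first pick $b\in B\cap(\Gamma K)^R$ and choose $\gamma\in\Gamma$, $k\in K$ with $d(b,\gamma k)<R$. Since $\gamma^{-1}$ is an isometry and $(\Gamma K)^R$ is $\Gamma$-invariant, replacing $B$ by $\gamma^{-1}B$ and $G$ by $\gamma^{-1}G\gamma$ I may assume $b\in B_R(K)$ and hence $B\subseteq B_{2R}(K)$; a null homotopy for the translated data transports back along $\gamma$ to one for the original. Properness of the metric makes $\bar B_{2R}(K)$ compact, so it sits inside a finite subcomplex $L\subseteq X$. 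Because $X$ is a model for $\underbar E\Gamma$ its cell stabilizers are finite, hence the $\Gamma$-action is proper and $\Gamma_L:=\{g\in\Gamma\mid gL\cap L\neq\emptyset\}$ is finite. Any $G$-invariant $\emptyset\neq B\subseteq L$ forces $G\subseteq\Gamma_L$, so only finitely many finite subgroups $G_1,\dots,G_m\leq\Gamma_L$ can arise.

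For each $G_i$ the fixed point set $X^H$ is contractible for every subgroup $H\leq G_i$; equivariant obstruction theory then exhibits $X$ as $G_i$-equivariantly contractible and provides a $G_i$-equivariant homotopy $H_i\colon X\times[0,1]\to X$ from $\id_X$ to a constant map. By compactness of $L\times[0,1]$, the number $\delta_i:=\sup\{d(x,H_i(x,t))\mid x\in L,\,t\in[0,1]\}$ is finite; set $S:=1+\max_i\delta_i$. For $B$ translated as above with $G=G_i$, the restriction of $H_i$ to $(B\cap(\Gamma K)^R)\times[0,1]$ is the required $G_i$-equivariant null homotopy, and since every point moves by at most $\delta_i<S$ its image lies in $B^S$. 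The finiteness of the list $G_1,\dots,G_m$ is precisely what allows $S$ to be chosen independently of $G$---the strengthening of the definition relative to~\cite{bartels}---and together with the $G_i$-equivariant contractibility of $X$ this is the real content of the argument; the rest is bookkeeping of the $\Gamma$-action and the proper metric.
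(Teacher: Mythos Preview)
Your proof is correct and follows essentially the same strategy as the paper: use the $\Gamma$-action and properness of the metric to translate $B$ into a fixed compact region, reduce to finitely many possible finite groups $G$ by properness of the action, and then invoke $G$-equivariant contractibility of $X$ to produce a null homotopy with controlled displacement. The paper organizes this slightly differently---it first thickens $B$ to a $G$-invariant finite subcomplex and then enumerates pairs $(\text{subcomplex},G)$ up to the $\Gamma$-action rather than fixing a single $L$ and enumerating only the groups---but the content is the same.
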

\begin{proof}
Let $R>0$ and a finite subcomplex $K\subseteq X$ be given. Because the metric on $X$ is proper $K^R$ is contained in a finite subcomplex $K'\subseteq X$. Let $k$ be an upper bound on the diameter of cells in $\Gamma K'$. Every subspace $Y\subseteq \Gamma K'$ (invariant under some $G\leq\Gamma$) of diameter less than $R$ is contained in a finite subcomplex of $\Gamma K'$ (invariant under $G$) of diameter less than $R':=R+k$. Let $\mcB$ be the set of all finite subcomplexes of $\Gamma K'$ of diameter less than $R'$. Define $\mcT:=\{(B,G)\mid B\in\mcB,G\leq\Gamma_B\}$, where $\Gamma_B:=\{\gamma\in\Gamma\mid\gamma B=B\}$. $\Gamma$ acts on $\mcT$ by $\gamma(B,G):=(\gamma B,G^{\gamma})$, where $G^{\gamma}:=\gamma G\gamma^{-1}$. Since the metric on $X$ is proper there exists a finite subcomplex $K''$ containing $K'^{R'}$. For each $(B,G)\in\mcT$ there exists $\gamma\in \Gamma$ with $\gamma B\cap K'\neq\emptyset$ and therefore $\gamma B\in K''$. The complex $K''$ is finite and so up to translation by $\Gamma$ there are only finitely many choices of $B$. Furthermore, the subgroups $\Gamma_B$ are finite since the $\Gamma$-action on $X$ is proper. So for each $(B,G)\in\mcT$ with fixed $B$ there are only finitely many choices of $G$. This implies that the quotient $\Gamma\backslash\mcT$ is finite.\\
For every $(B,G)$ there exists an $S=S(B,G)>0$ such that $B$ is $G$-equivariantly contractible in $B^S$ since $X$ is $G$-equivariantly contractible by assumption. Because the action of $\Gamma$ on $\mcT$ has finite quotient, this $S$ can be chosen independently of $B$ and $G$.
\end{proof}
\begin{nota}
\label{nota:1.9}
Let $X$ be a space with an action of a group $\Gamma$ and let $\mcS$ be a collection of subsets of $\Gamma$. Define $X^\mcS$ to be the union of all fixed sets $X^H$, where $H\in\mcS$. If $\mcS$ is closed under conjugation by elements of $\Gamma$, then $X^\mcS$ is a $\Gamma$-invariant subspace of~$X$.
\end{nota}
Also \cite[Lemma 1.4]{bartels} can be modified to a relative version.
\begin{lemma}
\label{uni2}
Let $X$ be a $\Gamma$-space with a proper $\Gamma$-invariant metric and $Y\subseteq X$ be a $\Gamma$-invariant subspace. For every finite subgroup $G\leq\Gamma$ let $J(G)$ be the set of families $\mcS$ of subgroups of $G$ such that $\mcS$ is closed under conjugation by $G$. For all $n\in\bbN$ define
\[J_n:=\{(G,\mcS)\mid G\leq\Gamma, |G|\leq n, \mcS\in J(G)\}\]
and assume $X$ is uniformly $\mcF$in-contractible with respect to $Y$. Then $\coprod_{(G,\mcS)\in J_n}G\backslash X^\mcS$ is uniformly contractible with respect to $\coprod_{(G,\mcS)\in J_n}G\backslash(Y\cap X^\mcS)$ for every $n\in\bbN$.
\end{lemma}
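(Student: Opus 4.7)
Given $R > 0$, the plan is to set $S := R + S'$, where $S' = S'(2R)$ is obtained from the uniform $\mcF in$-contractibility of $X$ with respect to $Y$ at input $2R$. For fixed $(G, \mcS) \in J_n$ and $x \in G\backslash X^\mcS$ with $C := B_R(x) \cap (G\backslash(Y \cap X^\mcS))^R$ nonempty, write $q\colon X^\mcS \to G\backslash X^\mcS$ for the quotient, pick a lift $\tilde x \in X^\mcS$ of $x$, and let $G_{\tilde x}$ denote its stabilizer in $G$. The key set to consider is
\[\tilde C := B_R(\tilde x) \cap (Y \cap X^\mcS)^R \subseteq X^\mcS.\]

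Using the formula for the quotient metric together with $G$-invariance of $Y \cap X^\mcS$, one verifies that $q$ restricts to a surjection $\tilde C \twoheadrightarrow C$, that $\tilde C$ is $G_{\tilde x}$-invariant of diameter less than $2R$, and that $\tilde C \subseteq Y^R \subseteq Y^{2R}$. The uniform $\mcF in$-contractibility hypothesis, applied with the finite group $G_{\tilde x}$ and the set $\tilde C$ (at input $2R$), yields a $G_{\tilde x}$-equivariant null homotopy $F\colon \tilde C \times I \to \tilde C^{S'} \subseteq B_{R + S'}(\tilde x)$. Setting $\bar F(y, t) := q(F(\tilde y, t))$ for any lift $\tilde y \in \tilde C$ of $y$ would give a candidate null homotopy $C \times I \to B_S(x) \subseteq G\backslash X^\mcS$, with $\bar F_0 = \id_C$ and $\bar F_1$ constant.

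The main obstacle is well-definedness of $\bar F$. Two lifts $\tilde y_1, \tilde y_2 \in \tilde C$ of the same $y \in C$ satisfy $\tilde y_2 = g\tilde y_1$ for some $g \in G$, and one needs $q(F(\tilde y_1, t)) = q(F(\tilde y_2, t))$. For $g \in G_{\tilde x}$ this is immediate from $G_{\tilde x}$-equivariance of $F$. For $g \in G \setminus G_{\tilde x}$ the argument is more delicate; note however that properness of the $\Gamma$-action forces $d(g \tilde x, \tilde x) < 2R$ for any such $g$, so only finitely many such $g$ occur. To handle these I would either construct $F$ more carefully (for instance, so that its terminal value lies in $X^G$ rather than merely $X^{G_{\tilde x}}$ and so that paths along the contraction are compatible with the $G$-action in the quotient) or promote $F$ to an honest $G$-equivariant null homotopy on the enlarged $G$-invariant set $G \tilde C$ by a second application of uniform $\mcF in$-contractibility at a larger input parameter. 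The subtlety in the latter approach is bounding $\diam(G \tilde C) \leq 2R + \diam(G \tilde x)$ uniformly over $(G, \mcS) \in J_n$; since $|G| \leq n$ does not by itself control $\diam(G \tilde x)$, this is the critical technical point, which I expect is resolved by a careful analysis of the overlaps $g \tilde C \cap \tilde C$ combined with properness.
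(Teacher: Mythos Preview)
Your proposal correctly locates the difficulty but does not resolve it. You see that a $G_{\tilde x}$-equivariant null homotopy on $\tilde C$ need not descend to the quotient, because lifts of the same point may differ by some $g\in G\setminus G_{\tilde x}$; and you see that enlarging to $G\tilde C$ fails because $\diam(G\tilde x)$ is not controlled by $|G|\le n$. What is missing is the mechanism that simultaneously (i) bounds the diameter of the set on which you apply $\mcF in$-contractibility and (ii) guarantees disjointness of the remaining $G$-translates.

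The paper's device is to replace the stabilizer $G_{\tilde x}$ by the ``chain subgroup''
\[
H:=\{\,g\in G \mid \exists\, e=g_1,\ldots,g_m=g\in G \text{ with } d(g_i\tilde x,g_{i+1}\tilde x)\le 2R \text{ for all } i\,\}.
\]
Because $|H|\le |G|\le n$, the set $B:=(H\tilde x)^R$ has diameter at most $2Rn$; hence one applies uniform $\mcF in$-contractibility at input $2Rn$ (so your choice $S'=S'(2R)$ is too small) to obtain an $H$-equivariant null homotopy of $B\cap Y^R$ inside $B^{S}$ onto an $H$-fixed point $z$. The definition of $H$ is rigged so that for $g\in G\setminus H$ one has $gB\cap B=\emptyset$: if $gh_1x_1=h_2x_2$ with $h_i\in H$, $x_i\in B_R(\tilde x)$, then $d(h_2^{-1}gh_1\tilde x,\tilde x)\le 2R$, forcing $h_2^{-1}gh_1\in H$ and hence $g\in H$. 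Thus the $G$-translates of $B$ are pairwise disjoint, and the $H$-equivariant contraction extends by translation to a $G$-equivariant homotopy on $GB$ collapsing each $gB$ to $gz$. This now descends to the quotient and restricts to $X^{\mcS}$ by $G$-equivariance.

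In short, the ``careful analysis of the overlaps $g\tilde C\cap\tilde C$'' you anticipate is precisely the construction of $H$; without it (or an equivalent idea) the argument does not close.
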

\begin{proof}
Let $n\in\bbN$ and $R>0$ be given. Let $S>0$ be such that for every finite subgroup $G\leq\Gamma$ and every $G$-invariant subset $B\subseteq X$ of diameter less or equal to $2Rn$ with $B\cap Y^R\neq\emptyset$ the inclusion $B\cap Y^R\hookrightarrow B^S$ is $G$-equivariantly null homotopic.\\
Let $(G,\mcS)\in J_n$, $y\in G\backslash X^\mcS$ and $x\in q^{-1}(y)$, where $q\colon X^\mcS\rightarrow G\backslash X^\mcS$ is the quotient map. Let $H$ be the subgroup of $G$ consisting of all $g\in G$ for which there is a sequence $g_1,..,g_m\in G$ such that $g_1=e$, $g_m=g$ and $d(g_ix,g_{i+1}x)\leq 2R$. This is indeed a subgroup since for $g,h\in H$ with sequences $e=g_1,..,g_{m}=g$ and $e=h_1,..,h_{m'}=h$ as above the sequence $e=g_1,..,g_m=g,gh_1,..,gh_{m'}=gh$ shows that $gh\in H$. The diameter of $B:=Hx^R\subseteq X$ is bounded by $2R|G|\leq 2Rn$. Therefore, if $B\cap Y^R\neq\emptyset$, the inclusion $B\cap Y^R\hookrightarrow B^S$ is $H$-equivariantly null homotopic. In particular let $z\in B^S$ be the point fixed by $H$ on which $B\cap Y^R$ contracts (for some choice of a null homotopy). 
Let $g\in G$ and $y\in gB\cap B$, then there are $h_1,h_2\in H,x_1,x_2\in x^R$ such that $y=gh_1x_1=h_2x_2$. Then
\[d(h_2^{-1}gh_1x,x)\leq d(h_2^{-1}gh_1x,h_2^{-1}gh_1x_1)+d(h_2^{-1}gh_1x_1,x_2)+d(x_2,x)\leq R+0+R\]
and $h_2^{-1}gh_1\in H$ and thus also $g\in H$. This shows that for $g\in G-H$ we have $gB\cap B=\emptyset$ and so for $g,g'\in G$ the sets $gB$ and $g'B$ only intersect non-trivially if they are equal. Therefore, the inclusion $GB\cap Y^R\hookrightarrow GB^S$ is $G$-equivariantly homotopic to a map that sends $gB$ to $gz$. By $G$-equivariance this homotopy can be restricted to $X^\mcS$, which induces the required null homotopy on the quotient.
\end{proof}
\section{Finite quotient Finite Decomposition Complexity}
\begin{defi}
A metric space $X$ decomposes over a class of metric spaces $\mcC$ if for every $r>0$ there exists a decomposition $X=U_r\cup V_r$ and further decompositions
\[U_r=\bigcup_{i\in I_r}U_{r,i},\quad V_r=\bigcup_{j\in J_r}V_{r,j}\]
such that $d(U_{r,i},U_{r,i'})>r$ and $d(V_{r,j},V_{r,j'})>r$ for all $i,i'\in I_r,j,j'\in J_r$ with $i\neq i',j\neq j'$ and such that the metric spaces $\coprod_{i\in I_r}U_{r,i}$ and $\coprod_{j\in J_r}V_{r,j}$ are in~$\mcC$.\\
A class of metric spaces $\mcC$ is stable under decomposition if every metric space which decomposes over $\mcC$ actually belongs to $\mcC$.
\end{defi}
A metric space $X$ is called \emph{semi-bounded} if there exists $R>0$ such that for all $x,y\in X$ either $d(x,y)<R$ or $d(x,y)=\infty$.
\begin{defi}
The class of metric spaces $\mcD$ with \emph{finite decomposition complexity} is the minimal class of metric spaces which contains all semi-bounded spaces and is stable under decomposition. We say $X$ has FDC if it is contained in $\mcD$.
\end{defi}
This definition of FDC is the same as the original definition (\cite[Definition 2.1.3]{fdc}) but using only metric spaces instead of metric families. A metric family $\{X_i\}$ has FDC in the sense of \cite{fdc} if and only if $\coprod X_i$ has FDC as in the above definition.
\begin{defi}
Let $X$ be a metric $\Gamma$-space and $Y\subseteq X$ a subspace. We say that $Y$ has \emph{finite quotient FDC} (fqFDC) if for every $n\in \bbN$ the space \[\coprod_{G\leq\Gamma, |G|\leq n}G\backslash GY\] has FDC.
\end{defi}
\begin{rem}
As remarked in the introduction if \cite[Theorem 7.5]{bartels} is true, then we do not need the assumption on the upper bound on the order of the finite subgroups of $\Gamma$ in our main result, \cref{main}. In that case it would also suffice that $G\backslash \Gamma$ has FDC for every finite subgroup of $\Gamma$ (and a proper invariant metric on $\Gamma$) instead of assuming that $\coprod_{G\leq \Gamma, |G|\leq n}G\backslash \Gamma$ has FDC for every $n\in\bbN$.
\end{rem}
Recall the following definition of asymptotic dimension. By \cite[Theorem 9.9]{roe} it is equivalent to the original definition of Gromov \cite{gromov}.
\begin{defi}
Let $X$ be a metric space. The \emph{dimension of a cover} $\mcU$ of $X$ is the smallest integer $n$ such that each $x\in X$ is contained in at most $n+1$ members of $\mcU$. The cover $\mcU$ is called \emph{bounded} if $\sup_{U\in\mcU}\diam U<\infty$. $\mcU$ has \emph{Lebesgue number} at least $R>0$ if the ball of radius $R$ around $x$ is contained in some $U\in\mcU$ for each $x\in X$.\\
The \emph{asymptotic dimension} of $X$ is the smallest integer $n$ such that for any $R>0$ there exists an $n$-dimensional bounded cover $\mcU$ of $X$ whose Lebesgue number is at least $R$.
\end{defi}
\begin{lemma}
\label{dimension}
Let $X$ be a metric $\Gamma$-space with finite asymptotic dimension. Then $X$ has fqFDC.
\end{lemma}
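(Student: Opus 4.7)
The plan is to reduce fqFDC to the well-known fact (proved in \cite{fdc}) that metric spaces of finite asymptotic dimension have FDC. Thus it suffices to exhibit, for each $n \in \bbN$, a uniform bound on the asymptotic dimension of $Z_n := \coprod_{G \leq \Gamma,\, |G| \leq n} G \backslash X$.

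First I would show that for any finite $G \leq \Gamma$ acting by isometries, $\asdim(G \backslash X) \leq |G|(\asdim X + 1) - 1$. Given a bounded cover $\mcU$ of $X$ of dimension $\leq \asdim X$ with Lebesgue number at least $R$, I would push $\mcU$ forward along the quotient map $q \colon X \to G \backslash X$ to obtain $q_*\mcU := \{q(U) : U \in \mcU\}$. Using that $B_R([x]) = q(B_R(x))$ for every $[x] \in G\backslash X$ and that $q$ does not increase distances, $q_*\mcU$ is bounded by the same constant as $\mcU$ and again has Lebesgue number at least $R$. Its multiplicity at $[x]$ is bounded by $\sum_{y \in Gx}|\{U \in \mcU : y \in U\}| \leq |G|(\asdim X + 1)$, since an orbit has at most $|G|$ elements, each lying in at most $\asdim X + 1$ members of $\mcU$.

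Second, because distinct components of $Z_n$ are at infinite distance from one another, the covers of the individual quotients $G \backslash X$ (with $|G| \leq n$) produced in the first step assemble into a single cover of $Z_n$ sharing the same Lebesgue number, diameter bound and dimension. This yields $\asdim Z_n \leq n(\asdim X + 1) - 1 < \infty$, and the cited theorem then gives $Z_n \in \mcD$ as required. The delicate point is the multiplicity bound in the first step; everything else is bookkeeping, exploiting that all relevant quantities are controlled uniformly by $\asdim X$ and $n$. The unavoidable factor of $|G|$ in the multiplicity bound (arising from non-free orbits) is precisely what forces the definition of fqFDC to quantify over $|G| \leq n$ separately rather than over all finite subgroups simultaneously.
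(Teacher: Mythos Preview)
Your proposal is correct and follows essentially the same route as the paper: push forward a bounded cover of $X$ with large Lebesgue number along each quotient map $q_G\colon X\to G\backslash X$, bound the resulting multiplicity by $|G|(\asdim X+1)$, assemble these covers over the disjoint union, and then invoke \cite[Theorem~4.1]{fdc}. Your bound $\asdim Z_n\le n(\asdim X+1)-1$ is even slightly sharper than the paper's stated $n(\asdim X+1)$, but the argument is the same.
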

\begin{proof}
Let $k:=\asdim X$ and let $R>0$ be given. Let $\mcU$ be an at most $k$-dimensional bounded cover of $X$ whose Lebesgue number is at least $R$. For every $G\leq\Gamma$ finite let $p_G\colon X\rightarrow G\backslash X$ be the quotient map. Define $\mcU^G:=\{p_G(U)\mid U\in\mcU\}$. Then $\{\mcU^G\}_{|G|\leq n}$ is a bounded cover of $\coprod_{G\leq\Gamma,|G|\leq n}G\backslash X$ with Lebesque number at least $R$ and dimension at most $n(k+1)$. Therefore, $\coprod_{G\leq\Gamma,|G|\leq n}G\backslash X$ has asymptotic dimension at most $n(k+1)$.\\
By \cite[Theorem 4.1]{fdc} a space with finite asymptotic dimension has FDC.
\end{proof}
\section{Permanence for fqFDC}
\label{perm}
We begin by recalling some elementary concepts from coarse geometry.
\begin{defi}
\label{coarse}
A map $f\colon X\rightarrow Y$ is
\begin{itemize}
\item \emph{uniformly expansive} if there exists a non-decreasing function \[\rho\colon [0,\infty)\rightarrow[0,\infty)\] such that for every $x, y \in X$ with $d(x,y)<\infty$
\begin{equation*}
d(f(x), f(y))\leq \rho(d(x, y)),
\end{equation*}
\item \emph{effectively proper} if there exists a proper non-decreasing function \[\delta\colon [0,\infty)\rightarrow[0,\infty)\] such that for every $x, y \in X$ with $d(x,y)<\infty$
\begin{equation*}
\delta(d(x, y))\leq d(f(x), f(y)),
\end{equation*}
\item a \emph{coarse embedding} if it is both uniformly expansive and effectively proper,
\item a \emph{coarse equivalence} if it is a coarse embedding and there exists a coarse embedding $g\colon Y\to X$ and $C>0$ such that for all $x\in X,y\in Y$ \[d((g\circ f)(x),x)\leq C,\quad d((f\circ g)(y),y)\leq C.\]
\item \emph{metrically coarse} if it is uniformly expansive and proper. If $X$ is proper and the metric on $X$ is finite, then $f$ is metrically coarse if it is a coarse embedding.
\end{itemize}
A metrically coarse homotopy between proper continuous maps is called a \emph{metric homotopy}.
\end{defi}
\begin{rem}~
\begin{itemize}
\item In the case where the metric spaces have a finite metric these definitions coincide with the common definitions but we allow effectively proper maps to map points with infinite distance close together.
\item Let $X=\bigcup_{i\in I}X_i,Y=\bigcup_{j\in J}Y_j$, where different components have infinite distance and the metric restricted to each $X_i$ resp. $Y_i$ is finite. Then viewing $X$ and $Y$ as families of metric spaces $\{X_i\}_{i\in I},\{Y_j\}_{j\in J}$ the above definitions coincide with those from \cite{rigidity}.
\end{itemize}
\end{rem}
Recall the following permanence properties of FDC from \cite{fdc}. We will restate them using only spaces instead of families.
\begin{lemma}[{Coarse Invariance \cite[3.1.3]{fdc}}]
\label{3.1.3}
Let $X,Y$ be metric spaces. If there is a coarse embedding from $X$ to $Y$ and $Y$ has FDC, then so does $X$. In particular if $X$ has FDC, then for any family of subspaces $\{X_i\}_{i\in I}$ the space $\coprod_{i\in I} X_i$ has FDC.
\end{lemma}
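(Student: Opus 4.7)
The plan is to run an induction on the construction of $\mcD$, packaged so that the class we induct on is manifestly closed under taking coarse sub-embeddings. Concretely, define
\[
\mcC := \{Y \text{ metric space} \mid \text{every } X \text{ admitting a coarse embedding } X\to Y \text{ lies in } \mcD\}.
\]
Since $\mcD$ is by definition the minimal class containing every semi-bounded space that is stable under decomposition, it will suffice to check those two closure properties for $\mcC$; then $\mcD\subseteq \mcC$, which is exactly the first statement. The ``in particular'' clause then follows because in the sense of \cref{coarse} the obvious map $\coprod_{i\in I} X_i \to X$ is a coarse embedding: any two points of the domain at finite distance lie in the same summand $X_i$, on which the map is an isometry, so both $\rho$ and $\delta$ may be taken to be the identity.

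For the base case I would verify that if $f\colon X\to Y$ is a coarse embedding and $Y$ is semi-bounded with constant $R$, then $X$ is semi-bounded. Pick an effective-properness function $\delta$ for $f$. For $x,y\in X$ with $d(x,y)<\infty$, uniform expansiveness gives $d(f(x),f(y))\leq\rho(d(x,y))<\infty$, so by semi-boundedness of $Y$ we in fact have $d(f(x),f(y))<R$. Then $\delta(d(x,y))<R$, and since $\delta$ is proper the preimage $\delta^{-1}([0,R))$ is bounded, yielding an $R'$ with $d(x,y)<R'$. Hence $X$ is semi-bounded, so $X\in\mcD$, and $Y\in\mcC$.

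For the decomposition step, suppose $Y$ decomposes over $\mcC$, and let $f\colon X\to Y$ be a coarse embedding with expansion bound $\rho$. Given $r>0$, set $r':=\rho(r)$ and apply the decomposition of $Y$ at scale $r'$ to obtain $Y=U'\cup V'$, $U'=\bigcup_i U'_i$, $V'=\bigcup_j V'_j$ with $d(U'_i,U'_{i'})>r'$ for $i\neq i'$ (and similarly for the $V'_j$) and with $\coprod_i U'_i,\coprod_j V'_j\in\mcC$. Set $U_i:=f^{-1}(U'_i)$ and $V_j:=f^{-1}(V'_j)$; then $X=(\bigcup_i U_i)\cup(\bigcup_j V_j)$. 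If $x\in U_i$ and $x'\in U_{i'}$ satisfy $d(x,x')\leq r$, then $d(f(x),f(x'))\leq\rho(r)=r'$, contradicting $d(U'_i,U'_{i'})>r'$; so $d(U_i,U_{i'})>r$, and symmetrically for the $V_j$. Moreover $f$ restricts to a coarse embedding $\coprod_i U_i \to \coprod_i U'_i$ (it preserves summand indices on pairs of finite distance), and since $\coprod_i U'_i\in\mcC$ we obtain $\coprod_i U_i\in\mcD$. The same applies to the $V$'s, so $X$ decomposes over $\mcD$ and therefore $X\in\mcD$ by stability under decomposition. Hence $Y\in\mcC$.

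The only subtle point I expect to have to keep straight is the interaction between the generalized coarse-embedding definition from \cref{coarse} (which ignores pairs at infinite distance) and the metric on $\coprod$ (which inserts infinite distances between summands); once that bookkeeping is clean, no single step is deep. The real content of the argument is the choice of the auxiliary class $\mcC$, which is what lets the induction close on itself rather than requiring coarse invariance to be argued after the fact.
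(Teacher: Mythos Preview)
The paper does not give its own proof of this lemma: it is quoted verbatim from \cite[3.1.3]{fdc} and left unproved, so there is nothing in the paper to compare against line by line. That said, your argument is correct and is precisely the standard one: define the auxiliary class $\mcC$ of spaces $Y$ for which every coarse pre-image lies in $\mcD$, check that $\mcC$ contains the semi-bounded spaces and is stable under decomposition, and conclude $\mcD\subseteq\mcC$ by minimality. Your handling of the extended-metric conventions (pairs at infinite distance, the map $\coprod_i X_i\to X$) matches how the paper sets things up in \cref{coarse} and the notation block, so the ``in particular'' clause goes through as you say.
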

\begin{thm}[{Fibering \cite[3.1.4]{fdc}}]
\label{3.1.4}
Let $X,Y$ be metric spaces and $f\colon X\rightarrow Y$ uniformly expansive. Assume $Y$ has finite decomposition complexity and that for every $R>0$ and every family of subspaces $\{Z_i\}_{i\in I}$ of $Y$ with $\diam Z_i\leq R$ for all $i\in I$ also the space $\coprod_{i\in I}f^{-1}(Z_i)$ has FDC. Then $X$ has FDC.
\end{thm}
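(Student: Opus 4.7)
The plan is to prove this by structural induction on how $Y$ is built up within $\mcD$. I will define $\mcD'$ to be the class of metric spaces $Y'$ with the property that every uniformly expansive map $f'\colon X' \to Y'$ satisfying the preimage hypothesis of the theorem (for $f'$ in place of $f$) forces $X' \in \mcD$. Since $\mcD$ is the minimal class containing all semi-bounded spaces and closed under decomposition, it will suffice to show (i) every semi-bounded space lies in $\mcD'$, and (ii) $\mcD'$ is itself stable under decomposition. Together these give $\mcD \subseteq \mcD'$, and specializing the defining property of $\mcD'$ to $Y \in \mcD$ and our given $f$ yields the conclusion.

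For (i), if $Y'$ is semi-bounded with parameter $R$, then $Y'$ splits as $\coprod_i Y'_i$ with each component of diameter less than $R$. Applying the preimage hypothesis to this bounded family directly gives $X' = \coprod_i (f')^{-1}(Y'_i) \in \mcD$.

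For (ii), suppose $Y'$ decomposes over $\mcD'$. Given $f'\colon X' \to Y'$ uniformly expansive with control function $\rho$ and satisfying the preimage hypothesis, and given $r>0$, I decompose $Y'$ at scale $\rho(r)$ as $Y' = U \cup V$ with $U = \bigcup_i U_i$, $V = \bigcup_j V_j$, and $\coprod_i U_i, \coprod_j V_j \in \mcD'$. Setting $U'_i := (f')^{-1}(U_i)$, uniform expansiveness forces $d(U'_i,U'_{i'}) > r$ for $i \neq i'$, since two points of $X'$ at distance $\leq r$ must map to points of $Y'$ at distance $\leq \rho(r)$, and such points cannot lie in distinct $U_i$. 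Hence pulling back yields a decomposition of $X'$ at scale $r$. It remains to place $\coprod_i U'_i$ in $\mcD$, which I get by applying the defining property of $\mcD'$ (for $\coprod_i U_i \in \mcD'$) to the restriction $f'|\colon \coprod_i U'_i \to \coprod_i U_i$, using that any uniformly bounded family inside $\coprod_i U_i$ (with its disjoint-union metric) is automatically a uniformly bounded family in $Y'$, so the preimage hypothesis transfers to the restriction. Treating $V$ symmetrically completes the decomposition of $X'$.

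The delicate bookkeeping, and the only spot I expect to pay real attention, is in step (ii): choosing the scale $\rho(r)$ on $Y'$ so that the pulled-back decomposition is genuinely $r$-separated on $X'$, and verifying that the preimage hypothesis transfers cleanly through the restriction $f'|_{\coprod_i U'_i}$. Once these are set up carefully, no further ideas are needed and the transfinite (or, more concretely, the closure-style) induction closes.
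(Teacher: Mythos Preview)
The paper does not give its own proof of this theorem: it is quoted verbatim from \cite[3.1.4]{fdc} and used as a black box. Your argument is correct and is essentially the standard proof one finds in that reference---a closure/structural-induction argument showing that the class $\mcD'$ of ``good targets'' contains the semi-bounded spaces and is stable under decomposition, hence contains $\mcD$.

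Two small points worth tightening. In step~(i) you assert $X' = \coprod_i (f')^{-1}(Y'_i)$ as metric spaces, not just as sets; this requires the observation that if $d_{X'}(x,x')<\infty$ then $d_{Y'}(f'(x),f'(x'))\leq\rho(d_{X'}(x,x'))<\infty$, so points mapping to distinct components $Y'_i$ are already at infinite distance in $X'$. In step~(ii), when you transfer the preimage hypothesis to $f'|\colon \coprod_i U'_i \to \coprod_i U_i$, you should note explicitly that a subset $Z\subseteq\coprod_i U_i$ of finite diameter lies in a single $U_{i_0}$, so $(f'|)^{-1}(Z)=(f')^{-1}(Z)\subseteq U'_{i_0}$ carries the subspace metric from $X'$ on both sides, and hence $\coprod_\alpha (f'|)^{-1}(Z_\alpha)=\coprod_\alpha (f')^{-1}(Z_\alpha)$ isometrically. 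With these remarks the argument is complete.
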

\begin{thm}[{Finite Union \cite[3.1.7]{fdc}}]
\label{3.1.7}
Let $X$ be a metric space expressed as a union $X=\bigcup_{i=1}^nX_i$ of finitely many subspaces. If the space $\coprod_{i=1}^nX_i$ has FDC, so does $X$.
\end{thm}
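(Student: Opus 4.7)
The plan is a two-layer induction. First, an outer induction on $n$ reduces the statement to the case $n=2$. Second, a transfinite inner induction on decomposition complexity handles the two-set case.

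\textbf{Reduction to $n=2$.} The case $n=1$ is vacuous since $X=X_1=\coprod_{i=1}^{1}X_i$ as metric spaces. For $n\geq 2$ set $Y:=\bigcup_{i=1}^{n-1}X_i$. The inclusion $\coprod_{i=1}^{n-1}X_i\hookrightarrow\coprod_{i=1}^{n}X_i$ is an isometric embedding on each component and hence a coarse embedding, so Coarse Invariance (\cref{3.1.3}) gives $\coprod_{i=1}^{n-1}X_i\in\mcD$. The inductive hypothesis then yields $Y\in\mcD$, and the isometric embedding $X_n\hookrightarrow\coprod_{i=1}^{n}X_i$ gives $X_n\in\mcD$ as well. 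Applying the $n=2$ case to $X=Y\cup X_n$ completes the outer induction.

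\textbf{Two-set case.} Write $\mcD=\bigcup_{\alpha}\mcD_\alpha$ for the transfinite stratification in which $\mcD_0$ is the class of semi-bounded spaces and $\mcD_{\alpha+1}$ consists of spaces that decompose over $\bigcup_{\beta\leq\alpha}\mcD_\beta$. I propose to show by transfinite induction on $\alpha$ that whenever $A\in\mcD_\alpha$ and $B\in\mcD$, the union $A\cup B$ lies in $\mcD$. Given $r>0$, fix a scale $r'\gg r$ and use the $\mcD_\alpha$-structure of $A$ to decompose $A=A'\cup A''$ with families $A'=\bigsqcup_i A'_i$ and $A''=\bigsqcup_j A''_j$ that are $r'$-separated in $A$ (hence $r'$-separated in $X$, since $A$ carries the subspace metric from $X$) and whose coproducts lie in $\bigcup_{\beta<\alpha}\mcD_\beta$. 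Independently decompose $B=B'\cup B''$ at scale $r$ into $r$-separated families whose coproducts lie in $\mcD$. Cover $X$ by $U:=A'\cup B'$ and $V:=A''\cup B''$, and regroup the pieces of $U$ by taking connected components of the graph on vertex set $\{A'_i\}\cup\{B'_k\}$ whose edges record proximity less than $r$ in $X$. For $r'$ chosen sufficiently large, each connected component contains at most one $A'$-piece, so the components are $r$-separated in $X$, and their coproduct embeds coarsely into $\bigl(\coprod_i A'_i\bigr)\sqcup\bigl(\coprod_k B'_k\bigr)$, which lies in $\mcD$ by the inner inductive hypothesis combined with Coarse Invariance. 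The symmetric treatment applies to $V$, completing the decomposition of $X$ over $\mcD$.

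The main obstacle is the bookkeeping in the regrouping step: one must choose $r'$ large enough to prevent any single $B'_k$ from simultaneously meeting the $r$-neighborhoods of two distinct $A'_i$, and one must verify that the amalgamated pieces still decompose over strictly simpler classes so that the transfinite induction closes. The base case $\alpha=0$ anchors the argument: semi-boundedness of $A$ supplies a uniform diameter bound $R_0$ on its finite-distance components, so that $r':=r+2R_0$ suffices. Higher levels are then handled by passing to a sub-decomposition of $A$ at an increased scale dictated by the diameters of the relevant $B'_k$ (which in turn, if not bounded, must themselves be further decomposed at an even larger scale before the graph-component argument is applied). This iterative scale-adjustment is the technically delicate part of the argument and the step most in need of careful verification.
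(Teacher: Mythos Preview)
The paper does not give its own proof of this theorem; it is quoted from \cite[3.1.7]{fdc}. However, the paper's proof of the fqFDC analogue, \cref{finunion}, reveals the intended argument, and it is essentially one line that you have overlooked: for $n=2$, the equality $X=X_1\cup X_2$ is \emph{already} a decomposition of $X$ over $\mcD$. Take $U_r=X_1$ and $V_r=X_2$ with singleton index sets $I_r=J_r=\{\ast\}$; the $r$-separation condition is vacuous, and $\coprod_{\{\ast\}}X_1=X_1$, $\coprod_{\{\ast\}}X_2=X_2$ both lie in $\mcD$ by Coarse Invariance (\cref{3.1.3}) applied to the isometric inclusions $X_i\hookrightarrow\coprod_{j=1}^{n}X_j$. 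Since $\mcD$ is stable under decomposition, $X\in\mcD$. Your outer induction on $n$ is correct and is all that is needed beyond this.

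Your transfinite inner induction is therefore unnecessary, and it also contains a genuine gap at precisely the step you flag. The claim ``for $r'$ chosen sufficiently large, each connected component contains at most one $A'$-piece'' is false in general: a single $B'_k$ can have arbitrarily large diameter (pieces in an FDC decomposition are $r$-separated, not bounded), so it may come within distance $r$ of two distinct $A'_i$'s no matter how large $r'$ is. Your suggested remedy---choosing $r'$ after seeing the diameters of the $B'_k$, or else further decomposing the $B'_k$ at a scale depending on $r'$---is circular, because $r'$ must be fixed \emph{before} you decompose $A$, while the $B'_k$ and their diameters depend on a decomposition you want to refine \emph{after} seeing $r'$. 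There is no well-founded ordinal on which this closes as written, and none is needed given the trivial argument above.
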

The following are generalizations to fqFDC of the above permanence properties.
\begin{lemma}[Coarse Invariance]
\label{coarse invariance}
Let $X,Y$ be metric $\Gamma$-spaces. If there exists a \mbox{$\Gamma$-equivariant} coarse embedding from $X$ to $Y$ and $Y$ has fqFDC, then so does $X$. In particular if $X$ has fqFDC, then for any family of subspaces $\{X_i\}_{i\in I}$ the space $\coprod_{i\in I}X_i$ has fqFDC.
\end{lemma}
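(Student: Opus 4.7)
The plan is to bootstrap from the plain-FDC coarse invariance in \cref{3.1.3}. Given a $\Gamma$-equivariant coarse embedding $f\colon X\to Y$ with control functions $\rho$ (uniform expansiveness) and $\delta$ (effective properness), for each finite subgroup $G\leq\Gamma$ the $\Gamma$-equivariance of $f$ ensures that $f$ descends to a well-defined map $\bar f_G\colon G\backslash X \to G\backslash Y$ between the metric quotients. The first step is to verify that $\bar f_G$ is again a coarse embedding with the \emph{same} control functions $\rho$ and $\delta$; this is a direct check using the identity $f(gx)=gf(x)$, the description $d_{G\backslash X}(Gx,Gx') = \min_{g\in G} d_X(x, gx')$ of the quotient metric, and monotonicity of $\rho$ and $\delta$, which lets them commute past the minima defining the quotient metrics on both sides.

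Next I would package these quotient maps together. Since points in distinct components of a formal disjoint union lie at infinite distance, assembling the maps $\bar f_G$ over all finite subgroups $G$ with $|G|\leq n$ yields a coarse embedding
\[\coprod_{|G|\leq n} G\backslash X \longrightarrow \coprod_{|G|\leq n} G\backslash Y\]
with the same control functions $\rho,\delta$. Because $Y$ has fqFDC, the target has FDC, and hence so does the source by \cref{3.1.3}. As $n$ was arbitrary, this shows that $X$ has fqFDC.

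For the ``in particular'' clause, given a family $\{X_i\}_{i\in I}$ of subspaces of a $\Gamma$-space $X$ with fqFDC, I would view $\coprod_i X_i$ as a subspace of the metric $\Gamma$-space $\coprod_i X$ with componentwise $\Gamma$-action. Then for each $n$ one has the identification
\[\coprod_{|G|\leq n} G\backslash G\Bigl(\coprod_i X_i\Bigr) \;=\; \coprod_i \coprod_{|G|\leq n} G\backslash GX_i,\]
which is a disjoint union of subspaces of $\coprod_{|G|\leq n} G\backslash X$. The latter has FDC by hypothesis, so the ``in particular'' clause of \cref{3.1.3} concludes.

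I do not expect a serious obstacle: the content of the lemma is essentially that passing to quotients by finite group actions and taking disjoint unions are coarse-geometric operations that preserve the relevant control data. The only step requiring care is bookkeeping the interaction of the monotone control functions with the minimum defining the quotient metric.
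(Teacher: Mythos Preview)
Your proposal is correct and follows essentially the same approach as the paper: descend $f$ to the quotient maps $\bar f_G$ using $\Gamma$-equivariance, verify that the same control functions $\rho,\delta$ work on the quotients via the min-formula for the quotient metric, assemble over $|G|\le n$ into a single coarse embedding, and invoke \cref{3.1.3}. Your treatment of the ``in particular'' clause (rearranging the double disjoint union and applying the subspace case of \cref{3.1.3}) is more explicit than the paper's, which leaves that step implicit, but the idea is the same.
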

\begin{proof}
Let $\rho,\delta$ be given as in \cref{coarse}. Then for all finite $G\leq \Gamma$ the map $\bar f\colon G\backslash X\rightarrow G\backslash Y$ fulfills the following for all $\bar x,\bar y\in G\backslash X$:
\[d(\bar f(\bar x),\bar f(\bar y))=\min_{g\in G}d(f(x),gf(y))\leq \min_{g\in G} \rho(d(x,gy))=\rho(d(\bar x,\bar y))\]
and
\[\delta(d(\bar x,\bar y))=\min_{g\in G}\delta(d(x,gy))\leq\min_{g\in G}d(f(x),f(gy))=d(\bar f(\bar x),\bar f(\bar y)).\]
So for every $k>0$ the map \[F\colon \coprod_{G\leq \Gamma,|G|\leq k}G\backslash X\rightarrow \coprod_{G\leq\Gamma,|G|\leq k}G\backslash Y\] is a coarse embedding as well. Since $\coprod_{G\leq\Gamma,|G|\leq k} G\backslash Y$ has FDC by assumption, so does $\coprod_{G\leq\Gamma,|G|\leq k} G\backslash X$ by \cref{3.1.3}.
\end{proof}
\begin{lemma}[Fibering]
\label{fibering}
Let $X,Y$ be metric $\Gamma$-spaces, $f\colon X\rightarrow Y$ uniformly expansive and $\Gamma$-equivariant. Assume $Y$ has fqFDC and for all $R>0$ the space $\coprod_{y\in Y}f^{-1}(B_R(y))$ has fqFDC (as a subspace of $\coprod_{y\in Y}X$ with componentwise $\Gamma$-action). Then $X$ has fqFDC.
\end{lemma}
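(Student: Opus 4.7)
The plan is to apply the non-equivariant Fibering Theorem \cref{3.1.4} to the map
\[\bar F_n\colon \coprod_{G\leq\Gamma,\,|G|\leq n} G\backslash X \;\to\; \coprod_{G\leq\Gamma,\,|G|\leq n} G\backslash Y\]
induced componentwise by $f$, for each $n\in\bbN$. Showing that $\coprod_{|G|\leq n} G\backslash X$ has FDC in this way is exactly what the conclusion asks for. The first two hypotheses of \cref{3.1.4} come for free: the map $\bar F_n$ is uniformly expansive with the same modulus as $f$, by the same computation used in the proof of \cref{coarse invariance}, and the target has FDC by the fqFDC assumption on $Y$.

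For the preimage condition, given any family $\{Z_i\}$ of subspaces of the target with $\diam Z_i\leq R$, I would pick a point $\bar y_i\in Z_i$ for each $i$ to embed $Z_i\subseteq B_{R+1}(\bar y_i)$, and then invoke Coarse Invariance \cref{3.1.3} to reduce to the larger family
\[\coprod_{G\leq\Gamma,\,|G|\leq n}\;\coprod_{\bar y\in G\backslash Y}\bar F_n^{-1}(B_{R+1}(\bar y)).\]
A direct computation using the $\Gamma$-equivariance of $f$ identifies $\bar F_n^{-1}(B_{R+1}(\bar y))$ with $\pi_G(f^{-1}(B_{R+1}(y)))$, where $\pi_G\colon X\to G\backslash X$ is the quotient map and $y\in Y$ is any lift of $\bar y$.

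The last step is to match this up with the hypothesis on preimages. Under the componentwise $\Gamma$-action on $\coprod_{y\in Y} X$ (trivial on the index $y$), the $G$-saturation of $\coprod_y f^{-1}(B_{R+1}(y))$ equals $\coprod_y G\cdot f^{-1}(B_{R+1}(y))$, whose $G$-quotient is exactly $\coprod_y \pi_G(f^{-1}(B_{R+1}(y)))$. Hence the fqFDC assumption on $\coprod_y f^{-1}(B_{R+1}(y))$ yields FDC of $\coprod_{|G|\leq n}\coprod_{y\in Y} \pi_G(f^{-1}(B_{R+1}(y)))$, and the family I need to control, indexed by $\bar y\in G\backslash Y$, is a sub-family of this obtained by choosing a single lift per orbit, so a final application of \cref{3.1.3} delivers its FDC. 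I expect the main delicacy of the proof to be the indexing bookkeeping---verifying that the componentwise $\Gamma$-action on $\coprod_y X$ gives precisely the quotient needed so the fqFDC hypothesis feeds directly into the preimage condition of \cref{3.1.4}; once this identification is in place the conclusion follows routinely from \cref{3.1.4}.
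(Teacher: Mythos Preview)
Your proposal is correct and follows essentially the same route as the paper: apply \cref{3.1.4} to the induced map on $\coprod_{|G|\leq n}G\backslash X$, and verify the preimage condition by identifying $f_G^{-1}$ of a ball in $G\backslash Y$ with $\pi_G$ of the preimage of a lifted ball in $Y$, then invoking the fqFDC hypothesis together with \cref{3.1.3}. The only cosmetic difference is that the paper first lifts each bounded $Z_i$ to a bounded $Z_i'\subseteq Y$ and then passes to the quotient, whereas you enlarge to balls in the quotient and lift those; the key identification and the chain of reductions are the same.
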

\begin{proof}
As above the induced map $F\colon\coprod_{|G|\leq k}G\backslash X\rightarrow \coprod_{|G|\leq k}G\backslash Y$ is uniformly expansive and $\coprod_{|G|\leq k}G\backslash Y$ has FDC by assumption. By \cref{3.1.4} it suffices to show that for every $R>0$ and every family $\{Z_i\}_{i\in I}$ of subspaces of $\coprod_{|G|\leq k}G\backslash Y$ with $\diam Z_i<R$ for all $i\in I$ the space $\coprod_{i\in I} F^{-1}(Z_i)$ has FDC. Because of the bound on the diameter for all $i\in I$ there exists a $G_i\leq\Gamma$ with $|G_i|\leq k$ and $Z_i\subseteq G_i\backslash Y$. For all $i\in I$ let $pr_i\colon Y\rightarrow G_i\backslash Y$ be the projection and choose $y_i\in pr_i^{-1}(Z_i)$. Define $Z_i':=pr_i^{-1}(Z_i)\cap B_R(y_i)$. Then $pr_i(Z_i')=Z_i$ and $\diam Z_i'<R$.\\
Let $pr_i'\colon X\rightarrow G_i\backslash X$ be the projection and $f_i\colon G_i\backslash X\rightarrow G_i\backslash Y$ the map induced by $f$. Then
\[pr_i'(f^{-1}(Z_i'))=f_i^{-1}(pr_i(Z_i'))=f_i^{-1}(Z_i),\]
and since $\coprod_{i\in I}f^{-1}(Z_i')\subseteq \coprod_{y\in Y}f^{-1}(B_R(y))$ has fqFDC by assumption, in particular $\coprod_{i\in I}pr_i'(f^{-1}(Z_i'))=\coprod f_i^{-1}(Z_i)$ has FDC. But this is the same as $\coprod_{i\in I} F^{-1}(Z_i)$.
\end{proof}
\begin{lemma}[Finite Union]
\label{finunion}
Let $X$ be a metric $\Gamma$-space, $Y\subseteq X$, for $i\in\{0,..,k\}$ let $X_i\subseteq X$ have fqFDC and assume $Y\subseteq \bigcup_{i=1}^k X_i$. Then $Y$ has fqFDC.
\end{lemma}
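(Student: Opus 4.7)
The plan is to unwind the definition of fqFDC: for each fixed $n\in\bbN$, I will show that $\coprod_{|G|\leq n}G\backslash GY$ has FDC by transferring this question to the $X_i$ via permanence properties already established for plain FDC in \cref{3.1.3} and \cref{3.1.7}.

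First, from $Y\subseteq\bigcup_{i=1}^k X_i$ one has $GY\subseteq\bigcup_{i=1}^k GX_i$ for every subgroup $G\leq\Gamma$. Since the quotient map $X\to G\backslash X$ is surjective and commutes with unions, this descends to an isometric inclusion $G\backslash GY\subseteq\bigcup_{i=1}^k G\backslash GX_i$ inside $G\backslash X$ with its quotient metric. Reindexing the disjoint union over $\{G\leq\Gamma:|G|\leq n\}$ and commuting $\coprod$ with the finite union $\bigcup$ yields an isometric embedding
\[
\coprod_{|G|\leq n} G\backslash GY \;\hookrightarrow\; \bigcup_{i=1}^{k} A_i,\qquad A_i:=\coprod_{|G|\leq n} G\backslash GX_i,
\]
viewed as subspaces of the ambient space $\coprod_{|G|\leq n} G\backslash X$.

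By the fqFDC hypothesis on $X_i$, each $A_i$ has FDC. A brief direct verification shows that a finite disjoint union of FDC spaces is again FDC: for $A\sqcup B$ with $d(A,B)=\infty$ and any $r>0$, the decomposition $U_r=A$, $V_r=B$ (each a single piece) has vacuous separation conditions, and the associated coproducts are simply $A$ and $B$, both already in $\mcD$; stability of $\mcD$ under decomposition then gives $A\sqcup B\in\mcD$. Iterating, $\coprod_{i=1}^k A_i$ has FDC, and the Finite Union theorem \cref{3.1.7} yields that $\bigcup_{i=1}^k A_i$ has FDC. Coarse Invariance \cref{3.1.3} applied to the isometric inclusion above then implies that the subspace $\coprod_{|G|\leq n}G\backslash GY$ has FDC. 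Since $n$ was arbitrary, $Y$ has fqFDC.

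I anticipate no serious obstacle — the lemma is essentially the pullback of the FDC Finite Union theorem along a subspace inclusion. The only two routine verifications are the bookkeeping identity $G\backslash\bigcup_i GX_i=\bigcup_i G\backslash GX_i$ as isometric subspaces of $G\backslash X$, and the closure of FDC under finite disjoint unions noted above; both are immediate from the definitions.
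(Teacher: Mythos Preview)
Your proof is correct and follows essentially the same approach as the paper. The paper's version is terser: it treats the case $k=2$ by observing directly that $\coprod_{|G|\leq n}G\backslash GY$ decomposes (in the sense of the FDC definition) over a subspace of $\big(\coprod_{|G|\leq n}G\backslash GX_1\big)\amalg\big(\coprod_{|G|\leq n}G\backslash GX_2\big)$, and then inducts on $k$; you instead handle all $k$ at once by quoting \cref{3.1.7} and then restricting via \cref{3.1.3}, which amounts to the same argument unpacked.
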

\begin{proof}
In the case $k=2$ the space $\coprod_{|G|\leq n} G\backslash GY$ decomposes over a subspace of $\left(\coprod_{|G|\leq n}G\backslash GX_1\right)\amalg \left(\coprod_{|G|\leq n}G\backslash GX_2\right)$, which has FDC by assumption. Therefore, $\coprod_{|G|\leq n}G\backslash GY$ has FDC. The general case follows by induction.
\end{proof}
\begin{rem}
It is well known that also a finite union of spaces with finite asymptotic dimension has again finite asymptotic dimension, see \cite[Proposition 9.13]{roe}.
\end{rem}
\section{Groups with fqFDC}
\label{groups}
On every group $\Gamma$ there exists a finite proper left invariant metric. For any two finite proper left invariant metrics $d,d'$ on $\Gamma$ the identity map $(\Gamma,d)\rightarrow (\Gamma,d')$ is a coarse embedding. For finitely generated groups this is shown in \cite[Proposition 1.15]{roe} and the proof is essentially the same for proper left invariant metrics in general. Therefore, the following definition does not depend on the chosen metric.
\begin{defi}
A group $\Gamma$ has \emph{fqFDC} if $(\Gamma,d)$ has fqFDC for a finite proper left invariant metric $d$.
\end{defi}
\begin{rem}
For any subgroup $G\leq\Gamma$ and any finite proper left invariant metric $d$ on $\Gamma$ the function
\[d_{G\backslash\Gamma}(G\gamma,G\gamma'):=\inf_{g\in G}d(g\gamma,\gamma')=\min_{g\in G}d(g\gamma,\gamma')\]
defines a finite proper metric on $G\backslash\Gamma$.\\
If we have a finite proper left invariant metric on $\Gamma$ and a normal subgroup $K\trianglelefteq \Gamma$, we will always consider this metric on $\Gamma/K=K\backslash\Gamma$. This metric is again left invariant.\\
If we talk about fqFDC in context of a group $\Gamma$, we will always mean $\Gamma$ with a chosen finite proper left invariant metric.
\end{rem}
\begin{lemma}
\label{lem:preimage}
Let $K\leq \Gamma$ be normal and $\pi\colon \Gamma\to \Gamma/K$ be the quotient map. For every $r>0$ and $\gamma\in\Gamma$ we have that
\[\pi^{-1}(B_r(\gamma K))=\gamma KB_r(e)=\gamma B_r(e)K.\]
\end{lemma}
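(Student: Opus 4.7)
The plan is to unfold the definition of the quotient metric on $\Gamma/K = K\backslash\Gamma$ and then repeatedly apply left-invariance of $d$ together with normality of $K$ to rearrange group elements. Recall that the quotient metric is given by
\[
d_{\Gamma/K}(\gamma K,\gamma'K)\;=\;\min_{k\in K}d(k\gamma,\gamma'),
\]
so $\gamma'\in\pi^{-1}(B_r(\gamma K))$ if and only if there exists $k\in K$ with $d(k\gamma,\gamma')<r$.

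For the inclusion $\pi^{-1}(B_r(\gamma K))\subseteq \gamma KB_r(e)$, I would take such a $\gamma'$ and $k$, and use left-invariance to rewrite $d(k\gamma,\gamma')<r$ as $(k\gamma)^{-1}\gamma'\in B_r(e)$, so that $\gamma'\in k\gamma B_r(e)\subseteq K\gamma B_r(e)=\gamma KB_r(e)$; the last equality is normality, $K\gamma=\gamma K$. For the reverse inclusion, given $\gamma'=\gamma k_0 b$ with $k_0\in K$ and $b\in B_r(e)$, normality provides $k_1\in K$ with $\gamma k_0=k_1\gamma$, and then
\[
d(k_1\gamma,\gamma')\;=\;d(\gamma k_0,\gamma k_0 b)\;=\;d(e,b)\;<\;r
\]
by left-invariance, so $\gamma'\in\pi^{-1}(B_r(\gamma K))$.

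Finally, to verify $\gamma KB_r(e)=\gamma B_r(e)K$, I would again invoke normality: any $kb$ with $k\in K$ and $b\in B_r(e)$ rewrites as $b(b^{-1}kb)\in B_r(e)K$ since $b^{-1}kb\in K$, and conversely any $bk$ rewrites as $(bkb^{-1})b\in KB_r(e)$. Left-multiplying by $\gamma$ gives the claimed equality. There is no real obstacle here; the proof is pure bookkeeping with left-invariance and the identity $K\gamma=\gamma K$.
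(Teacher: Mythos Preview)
Your proof is correct and follows essentially the same approach as the paper: both unwind the quotient metric, use left-invariance to identify $B_r(\gamma k)$ with $\gamma k B_r(e)$, and invoke normality $K\gamma=\gamma K$ to rearrange. The paper compresses this into the single chain $\pi^{-1}(B_r(\gamma K))=(\gamma K)^r=\bigcup_{k\in K}B_r(\gamma k)=\bigcup_{k\in K}\gamma kB_r(e)=\gamma KB_r(e)$ and leaves the final equality $\gamma KB_r(e)=\gamma B_r(e)K$ implicit, but your more detailed version is the same argument.
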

\begin{proof}
We have
\[\pi^{-1}(B_r(\gamma K))=(\gamma K)^r=\bigcup_{k\in K}B_r(\gamma k)=\bigcup_{k\in K} \gamma kB_r(e)=\gamma KB_r(e).\]
\end{proof}
To prove some permanence properties for groups we first need a stronger version of fqFDC.
\begin{defi}
A group $\Gamma$ has \emph{strong fqFDC} if for all groups $\Gamma'$ which contain $\Gamma$ as a normal subgroup and all $k\in\bbN$ the space $\coprod_{H\leq\Gamma',|H|\leq k}H\backslash H\Gamma$ has FDC.
\end{defi}
\begin{lemma}
\label{ext}
Strong fqFDC is closed under extensions $K\rightarrow \Gamma\rightarrow Q$ where $K$ is a characteristic subgroup of $\Gamma$.
\end{lemma}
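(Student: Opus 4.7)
The plan is to apply the Fibering Theorem (\cref{3.1.4}) to the projection by $K$. Fix $\Gamma\trianglelefteq\Gamma'$ and $k\in\bbN$; we must show $\coprod_{H\leq\Gamma',|H|\leq k}H\backslash H\Gamma$ has FDC. Since $K$ is characteristic in $\Gamma$ and $\Gamma\trianglelefteq\Gamma'$, $K$ is normal in $\Gamma'$, so we may form $Q':=\Gamma'/K$ with projection $\pi$. Then $Q=\pi(\Gamma)\trianglelefteq Q'$, and for each finite $H\leq\Gamma'$ the map $\pi$ descends to a $1$-Lipschitz map $f_H\colon H\backslash H\Gamma\to\bar H\backslash\bar HQ$, where $\bar H:=\pi(H)$. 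Assembling these yields a uniformly expansive map
\[F\colon X:=\coprod_{|H|\leq k}H\backslash H\Gamma\longrightarrow Y:=\coprod_{|H|\leq k}\bar H\backslash\bar HQ.\]

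I would first observe that every component of $Y$ is a component of $\coprod_{\bar H\leq Q',|\bar H|\leq k}\bar H\backslash\bar HQ$, which has FDC by strong fqFDC of $Q$ applied to $Q\trianglelefteq Q'$. Hence Coarse Invariance (\cref{3.1.3}) gives that $Y$ has FDC. By the Fibering Theorem it then suffices to prove that, for every $R>0$ and every family $\{Z_i\}_{i\in I}$ of subsets of $Y$ with $\diam Z_i<R$, the coproduct $\coprod_i F^{-1}(Z_i)$ has FDC.

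Each $Z_i$ lives in one component of $Y$, say labeled by $H_i$, so $F^{-1}(Z_i)=f_{H_i}^{-1}(Z_i)$. Picking $\bar H_i q_i\in Z_i$ with lift $\tilde q_i\in\Gamma$, \cref{lem:preimage} (applied in $\Gamma'$) gives
\[f_{H_i}^{-1}(Z_i)\subseteq H_i\backslash\bigl(H_i\tilde q_i K B_R^{\Gamma'}(e)\bigr).\]
Setting $H_i':=\tilde q_i^{-1}H_i\tilde q_i\leq\Gamma'$, the crucial step is that the assignment $H_i\tilde q_i x\mapsto H_i' x$ is, by left-invariance of the $\Gamma'$-metric, a well-defined isometry
\[H_i\backslash\bigl(H_i\tilde q_i K B_R^{\Gamma'}(e)\bigr)\cong H_i'\backslash\bigl(H_i' K B_R^{\Gamma'}(e)\bigr).\]
The target sits inside the $R$-neighborhood of $H_i'\backslash H_i'K$ in $H_i'\backslash\Gamma'$, which is coarsely equivalent to $H_i'\backslash H_i'K$. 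Because $|H_i'|=|H_i|\leq k$ and $H_i'\leq\Gamma'$, this is a component of $\coprod_{L\leq\Gamma',|L|\leq k}L\backslash LK$, which has FDC by strong fqFDC of $K$ applied to $K\trianglelefteq\Gamma'$. Coarse Invariance then delivers FDC of $\coprod_i F^{-1}(Z_i)$, finishing the Fibering argument.

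The main obstacle is the change-of-basepoint step: conjugation by $\tilde q_i$ need not be isometric on $\Gamma'$ and can move elements of $H_i$ arbitrarily far from the identity. The argument succeeds because one only needs the induced map on $H_i$-orbits, where left-invariance of the metric on $\Gamma'$ makes the identification an honest isometry. The remaining work is bookkeeping: verifying that $K$ stays normal in $\Gamma'$ and that $H_i'$ remains of order at most $k$ inside $\Gamma'$, so that the strong fqFDC hypotheses on $K$ and on $Q$ apply with the correct ambient groups.
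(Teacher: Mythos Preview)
Your proof is correct and follows essentially the same route as the paper: apply the Fibering Theorem to the projection $\Gamma'\to\Gamma'/K$ (using that $K\trianglelefteq\Gamma'$ and $Q\trianglelefteq\Gamma'/K$), invoke strong fqFDC of $Q$ for the base, and reduce the fibers to quotients $H'\backslash H'K$ via the conjugation isometry $H\backslash H\tilde q\,(\cdot)\cong H^{\tilde q}\backslash H^{\tilde q}(\cdot)$, then invoke strong fqFDC of $K$. The one minor difference is in the treatment of the $B_R(e)$ factor in the fibers: the paper writes $\coprod_{|H|\leq k,\,q} H\backslash Hq B_r(e)K\subseteq\bigcup_{\gamma\in B_r(e)}\coprod_{|H|\leq k,\,q} q\gamma\,H^{q\gamma}\backslash H^{q\gamma}K$ and uses the Finite Union Theorem (exploiting that $B_r(e)$ is finite for a proper metric), whereas you observe that $H_i'\backslash H_i'KB_R(e)$ is the $R$-neighbourhood of $H_i'\backslash H_i'K$ and hence uniformly coarsely equivalent to it. Both arguments are valid and equally short.
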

\begin{proof}
Let $K\trianglelefteq \Gamma$ be characteristic, $Q:=\Gamma/K$ and $\Gamma\trianglelefteq \Gamma'$. Assume $K$ and $Q$ have strong fqFDC.\\
Since $K$ is a characteristic subgroup of $\Gamma$ the group $K$ is normal in $\Gamma'$ and $Q$ is normal in $\Gamma'/K$. So we get a uniformly expansive map 
\[\coprod_{H\leq\Gamma',|H|\leq k}H\backslash H\Gamma\longrightarrow \coprod_{H\leq\Gamma',|H|\leq k}(H/H\cap K)\backslash (H/H\cap K)Q\]
and $\coprod_{H\leq\Gamma',|H|\leq k}(H/H\cap K)\backslash (H/H\cap K)Q$ has FDC because $Q$ has strong fqFDC. So by \cref{3.1.4} and \cref{lem:preimage} it suffices to show that for all $r>0$ the space $\coprod_{|H|\leq k,q\in\Gamma}H\backslash HqB_r(e)K$ has FDC. We have
\[\coprod_{|H|\leq k,q\in\Gamma}H\backslash HqB_r(e)K\subseteq\bigcup_{\gamma\in B_r(e)}\coprod_{|H|\leq k,q\in\Gamma} H\backslash Hq\gamma K=\bigcup_{\gamma\in B_r(e)}\coprod_{|H|\leq k,q\in\Gamma}q\gamma H^{q\gamma}\backslash H^{q\gamma}K\]
where $H^{q\gamma}=(q\gamma)^{-1}Hq\gamma$.
Now the lemma follows by \cref{3.1.7} and the assumption that $K$ has strong fqFDC.
\end{proof}
\begin{lemma}
Let $\Gamma$ be the direct union of groups $\Gamma_i$ having finite asymptotic dimension. Than $\Gamma$ has strong fqFDC.
\end{lemma}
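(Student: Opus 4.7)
My plan is to bootstrap from the case of a single $\Gamma_i$ and then combine these using the direct union structure.

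First I would verify that any group $\Gamma_0$ with finite asymptotic dimension has strong fqFDC. For $\Gamma_0\trianglelefteq\tilde\Gamma$ and $H\leq\tilde\Gamma$ with $|H|\leq k$, the subspace $H\Gamma_0$ is a disjoint union of at most $k$ isometric translates $h\Gamma_0$, so $\asdim H\Gamma_0=\asdim\Gamma_0$; the argument used to prove \cref{dimension} then shows $\asdim(H\backslash H\Gamma_0)\leq k(\asdim\Gamma_0+1)-1$, uniformly in $H$. Hence $\coprod_{|H|\leq k,H\leq\tilde\Gamma} H\backslash H\Gamma_0$ has finite asymptotic dimension, and in particular has FDC by \cite[Theorem 4.1]{fdc}. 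Thus every $\Gamma_i$ in the direct union has strong fqFDC.

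Now given $\Gamma\trianglelefteq\Gamma'$, $k\in\bbN$, and $r>0$, I would decompose $X:=\coprod_{|H|\leq k, H\leq\Gamma'}H\backslash H\Gamma$ at scale $r$. Distinct $H$-summands of $X$ lie at infinite distance from one another, so each $H\backslash H\Gamma$ can be decomposed independently. For fixed $H$, set $D_H:=\max_{h\in H}d(e,h)$; by properness of the metric $B_{r+D_H}(e)\cap\Gamma$ is finite, so by the direct union structure there exists $i_0=i_0(H,r)$ with $B_{r+D_H}(e)\cap\Gamma\subseteq\Gamma_{i_0}$. The key observation is that every $b\in B_r(e)\cap H\Gamma$ can be written as $b=hg$ with $h\in H$ and $g\in\Gamma$ satisfying
\[d(e,g)\leq d(e,h)+d(e,b)\leq D_H+r,\]
so $g\in\Gamma_{i_0}$ and hence $B_r(e)\cap H\Gamma\subseteq H\Gamma_{i_0}$. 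Consequently the $r$-ball around any $[\alpha]\in H\backslash H\Gamma$ (with $\alpha\in\Gamma$) lies in $H\backslash H\alpha H\Gamma_{i_0}$. Taking the equivalence relation on $\Gamma$ generated by $\alpha_1\sim\alpha_2$ iff $H\alpha_1H\Gamma_{i_0}\cap H\alpha_2H\Gamma_{i_0}\neq\emptyset$ then partitions $H\backslash H\Gamma$ into $r$-separated pieces. Each piece is a finite union of $H$-double-coset quotients $H\backslash H\beta\Gamma_{i_0}$, and each such quotient is $1$-Lipschitz bijective to a space of the form $(\beta^{-1}H\beta\cap\Gamma_{i_0})\backslash\Gamma_{i_0}$, where the acting group is a finite subgroup of $\Gamma_{i_0}$ of order at most $k$. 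The coproduct of all pieces (over all $H$ and all $\alpha$-classes) thus embeds as a subfamily of $\coprod_{j}\coprod_{|G|\leq k,G\leq\Gamma_j}G\backslash\Gamma_j$, which has FDC by Step 1 applied to each $\Gamma_j$ together with the coarse-invariance permanence from \cref{coarse invariance}.

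The hardest part will be verifying the $r$-separation of pieces when $H$ contains elements outside $\Gamma$: the ball $B_r(e)\subseteq\Gamma'$ then intersects non-trivial cosets of $\Gamma$, so naive $\Gamma_{i_0}$-coset pieces in $\Gamma$ are not automatically $r$-separated in $H\backslash H\Gamma$. The fix is the absorbing enlargement of $i_0$ by $D_H$ together with gluing double cosets linked by $H$-conjugation; a direct computation using left-invariance and the normality of $\Gamma$ in $\Gamma'$ shows that any $r$-close pair of representatives in distinct pieces forces a ``witness'' element of $\Gamma$ into $B_{r+D_H}(e)\cap\Gamma\subseteq\Gamma_{i_0}$, which then forces the pieces to coincide. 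The non-uniformity of $D_H$ across the coproduct is harmless precisely because different $H$-summands are at infinite distance in $X$ and can be treated with their own choice of $i_0(H,r)$.
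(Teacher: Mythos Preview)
There is a genuine gap in your uniformity argument. Your index $i_0=i_0(H,r)$ depends on $D_H=\max_{h\in H}d(e,h)$, and $D_H$ is unbounded as $H$ ranges over all subgroups of $\Gamma'$ of order at most $k$. Your claim that this is ``harmless because different $H$-summands are at infinite distance'' is precisely what fails: the definition of FDC for $\coprod_{|H|\leq k}H\backslash H\Gamma$ requires that the coproduct of \emph{all} decomposition pieces, taken over all $H$ at once, lie in $\mcD$. Your pieces land in quotients of $\Gamma_{i_0(H,r)}$ for unbounded $i_0$, hence in a space of the form $\coprod_j\coprod_{|G|\leq k,\,G\leq\Gamma_j}G\backslash\Gamma_j$ with $j$ ranging over an infinite set; neither \cref{3.1.3} nor \cref{coarse invariance} gives FDC of such a coproduct (those results concern subspaces of a single FDC space, not a coproduct of unrelated FDC spaces with unbounded asymptotic dimension). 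A secondary issue is that your ``$1$-Lipschitz bijective'' map goes from $(\beta^{-1}H\beta\cap\Gamma_{i_0})\backslash\Gamma_{i_0}$ \emph{to} $H\backslash H\beta\Gamma_{i_0}$, so it does not by itself yield a coarse embedding of the piece into $G\backslash\Gamma_j$; you would need effective properness, which is not automatic since $\beta^{-1}H\beta$ contains elements outside $\Gamma_{i_0}$.

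The paper circumvents both problems with a pigeonhole/word-shortening trick. One replaces your $H\Gamma_{i_0}$ by the subgroup $Z_H:=\langle B_r(e)\cap H\Gamma\rangle\leq\Gamma'$, so that $H\Gamma$ is the $r$-disjoint union of cosets $\gamma Z_H$. The key claim is that $Z_H\cap\Gamma\leq\Gamma_i$ for a \emph{single} $i$ chosen so that $B_{2(k+1)r}(e)\cap\Gamma\subseteq\Gamma_i$, depending only on $r$ and $k$: any $z\in Z_H$ can be written as $h_1\gamma_1\cdots h_m\gamma_m\gamma$ with $h_j\gamma_j\in B_r(e)$ and $\gamma\in\Gamma_i$, and if $m>k\geq|H|$ two of the tail products $h_n\cdots h_m$ must agree in $H$, allowing the word to be shortened modulo an element of $B_{2(k+1)r}(e)\cap\Gamma\subseteq\Gamma_i$; hence one may take $m\leq k$, and then $z\in\Gamma$ forces $z\in\Gamma_i$. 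With this uniformity one bounds $\asdim\big(\coprod_{\gamma,|H|\leq k}H\gamma Z_H\big)$ in terms of $\asdim\Gamma_i$ and $k$ alone \emph{before} passing to the $H$-quotient, and the argument of \cref{dimension} finishes the job.
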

\begin{proof}
Let $k\in\bbN$ and $\Gamma\trianglelefteq\Gamma'$ be given. For every $r>0$ and $H\leq\Gamma'$ with $|H|\leq k$ define
\[Z_H:=\langle B_r(e)\cap H\Gamma\rangle.\]
Since
\[H\Gamma=\coprod^{r-disj}_{\gamma Z_H\in H\Gamma/Z_H}\gamma Z_H,\]
in particular
\[H\Gamma=\coprod^{r-disj}_{H\gamma Z_H\in H\backslash H\Gamma/Z_H}H\gamma Z_H.\]
Therefore, $\coprod_{|H|\leq k}H\backslash H\Gamma$ $r$-decomposes over $\coprod_{\gamma\in \Gamma,|H|\leq k}H\backslash H\gamma Z_H$.\\
So it suffices to show that $\coprod_{\gamma\in \Gamma,|H|\leq k}H\backslash H\gamma Z_H$ has FDC for every $r>0$.\\
Let $i$ be such that $B_{2(k+1)r}(e)\cap \Gamma\subseteq \Gamma_i$.\\
\\
\textbf{Claim:} $Z_H\cap\Gamma\leq\Gamma_i$ for all $|H|\leq k$.\\
\\
Using this we conclude that $\coprod_{|H|\leq k}Z_H\cap\Gamma$ has finite asymptotic dimension. Furthermore, $Z_H/Z_H\cap\Gamma\leq H\Gamma/\Gamma\cong H/H\cap \Gamma$ has less or equal to $k$ elements. For every $H$ choose $h^H_i\in Z_H$ with $Z_H=\bigcup_{i=1}^kh^H_i(Z_H\cap\Gamma)$. Since $h^H_i(Z_H\cap\Gamma)$ is isometric to $Z_H\cap\Gamma$ for every $1\leq i\leq k$ the space
$\coprod_{|H|\leq k}h^H_i(Z_H\cap\Gamma)$ has finite asymptotic dimension and, therefore, also 
\[\coprod_{|H|\leq k}Z_H=\bigcup_{i=1}^k\coprod_{|H|\leq k}h_i^H(Z_H\cap \Gamma)\]
has finite asymptotic dimension.\\
Now enumerating the elements of each $H$ with $|H|\leq k$ we conclude in the same way that
\[\coprod_{\gamma\in\Gamma,|H|\leq k}H\gamma Z_H=\bigcup_{i=1}^k\coprod_{\gamma\in\Gamma,|H|\leq k}h^H_i\gamma Z_H\]
has finite asymptotic dimension.\\
If $\asdim \coprod_{\gamma\in\Gamma,|H|\leq k}H\gamma Z_H\leq m$, then $\asdim\coprod_{\gamma\in\Gamma,|H|\leq k}H\backslash H\gamma Z_H\leq k(m+1)$ (see the proof of \cref{dimension}).\\
Therefore, $\coprod_{\gamma\in\Gamma,|H|\leq k}H\backslash H\gamma Z_H$ has FDC.\\
\\
It remains to prove the above claim:\\
Let $z\in Z_H$ and let $m$ be minimal with
\[z=h_1\gamma_1\ldots h_{m}\gamma_{m}\gamma\]
for some $h_j\in H,\gamma_j\in\Gamma,\gamma\in\Gamma_i$ such that $h_j\gamma_j\in B_r(e)$ for all $j$.\\
If $m>k\geq |H|$, then there exist integers $n_1,n_2$ with $m-|H|\leq n_1<n_2\leq m$ and \[h_{n_1}\ldots h_m=h_{n_2}\ldots h_m.\] Therefore, 
\[(h_{n_1}\gamma_{n_1}\ldots h_m\gamma_m)^{-1}h_{n_2}\gamma_{n_2}\ldots h_m\gamma_m\in \Gamma\cap B_{2(k+1)r}(e)\subseteq \Gamma_i\]
and there exists $\gamma'\in\Gamma_i$ with
\[h_{n_1}\gamma_{n_1}\ldots h_m\gamma_m=h_{n_2}\gamma_{n_2} \ldots h_m\gamma_m\gamma'.\]
So $m$ is not minimal, a contradiction.\\
Let $z\in Z_H\cap\Gamma$ be represented as
\[z=h_1\gamma_1\ldots h_{m}\gamma_{m}\gamma\]
for some $h_j\gamma_j\in B_{r}(e)\cap H\Gamma,\gamma\in\Gamma_i$ with $m\leq k$.\\
Then $h_1\gamma_1\ldots h_{m}\gamma_{m}\in \Gamma\cap B_{kr}(e)\subseteq \Gamma_i$ and therefore also $z=h_1\gamma_1\ldots h_{m}\gamma_{m}\gamma\in\Gamma_i$. This proves the claim.
\end{proof}
By the classification of finitely generated abelian groups we immediately get the following:
\begin{cor}
\label{abelian}
Abelian groups have strong fqFDC.\qed
\end{cor}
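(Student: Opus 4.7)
The plan is to invoke the immediately preceding lemma, which states that any group expressible as a direct union of subgroups of finite asymptotic dimension has strong fqFDC. This reduces the corollary to two very small observations about abelian groups.

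First, I would note that every abelian group $A$ is the directed union of its finitely generated subgroups: the collection of finitely generated subgroups of $A$, ordered by inclusion, is directed (the subgroup generated by the union of two finitely generated subgroups is again finitely generated and contains both), and its union is all of $A$. Second, by the classification of finitely generated abelian groups, every such subgroup is isomorphic to $\bbZ^{n}\oplus F$ for some $n\geq 0$ and some finite abelian group $F$. Since $\bbZ^{n}$ (with any finite proper left invariant metric, in particular the word metric coming from a finite generating set) has asymptotic dimension $n$, and since pairing with the finite group $F$ only alters the metric by a bounded amount (hence gives a coarsely equivalent space), each such subgroup has finite asymptotic dimension.

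With these two observations, the preceding lemma applies directly and yields that $A$ has strong fqFDC. There is no real obstacle; the only place requiring any care is matching the interpretation of ``direct union'' in the preceding lemma with the directed union of finitely generated subgroups of $A$, and ensuring that the finite proper left invariant metric on $A$ restricts to a coarsely equivalent finite proper left invariant metric on each $\Gamma_{i}$, which is immediate from \cref{coarse invariance} and the remark on uniqueness up to coarse equivalence of such metrics.
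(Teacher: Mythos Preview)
Your proposal is correct and matches the paper's argument exactly: the paper simply remarks that the corollary follows from the classification of finitely generated abelian groups together with the preceding lemma on direct unions, which is precisely what you do (with a bit more detail spelled out).
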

Combining \cref{ext} and \cref{abelian} yields:
\begin{cor}
\label{solv}
Solvable groups have strong fqFDC.\qed
\end{cor}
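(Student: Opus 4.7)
The plan is to induct on the derived length of the solvable group $\Gamma$, using the derived series $\Gamma = \Gamma^{(0)} \trianglerighteq \Gamma^{(1)} \trianglerighteq \cdots \trianglerighteq \Gamma^{(n)} = \{e\}$, where $\Gamma^{(i+1)} := [\Gamma^{(i)}, \Gamma^{(i)}]$. The crucial structural fact I would invoke is that each commutator subgroup $\Gamma^{(i+1)}$ is a characteristic subgroup of $\Gamma^{(i)}$, which is exactly the hypothesis needed to apply \cref{ext}.

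For the base case, a group of derived length $0$ is trivial and one of derived length $1$ is abelian, hence has strong fqFDC by \cref{abelian}. For the inductive step, assume every solvable group of derived length at most $n$ has strong fqFDC, and let $\Gamma$ have derived length $n+1$. Then $\Gamma' = [\Gamma,\Gamma]$ has derived length $n$, so by induction $\Gamma'$ has strong fqFDC. The quotient $\Gamma/\Gamma'$ is abelian, and so has strong fqFDC by \cref{abelian}. Since $\Gamma'$ is characteristic in $\Gamma$, the extension
\[
1 \longrightarrow \Gamma' \longrightarrow \Gamma \longrightarrow \Gamma/\Gamma' \longrightarrow 1
\]
is of the type allowed by \cref{ext}, and we conclude that $\Gamma$ has strong fqFDC.

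There is essentially no obstacle: all the work is contained in \cref{ext} and \cref{abelian}, and the only content of the corollary is the observation that the derived series of a solvable group is a finite chain of characteristic-subgroup extensions with abelian quotients. I would just need to be careful to note that $[\Gamma^{(i)}, \Gamma^{(i)}]$ is characteristic in $\Gamma^{(i)}$ (not just normal), which is immediate because any automorphism of $\Gamma^{(i)}$ sends commutators to commutators.
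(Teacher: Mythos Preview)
Your proof is correct and is exactly the argument the paper intends: the corollary is stated with a \qed\ immediately after ``Combining \cref{ext} and \cref{abelian} yields,'' and your induction on derived length using that $[\Gamma,\Gamma]$ is characteristic in $\Gamma$ is precisely how one combines these two results.
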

To show that finitely generated linear groups have fqFDC we need the following extension property.
\begin{prop}
\label{ext2}
Let $K\rightarrow \Gamma\rightarrow Q$ be an extension and let $K$ have strong fqFDC and $Q$ fqFDC. Then $\Gamma$ has fqFDC.
\end{prop}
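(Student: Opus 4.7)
The plan is to mirror the proof of \cref{ext} but applied to the fqFDC of $\Gamma$ rather than strong fqFDC, using only the fqFDC of $Q$. Fix $k\in\bbN$; we must show that $\coprod_{H\leq\Gamma,\,|H|\leq k}H\backslash\Gamma$ has FDC. The quotient map $\pi\colon\Gamma\to Q$ descends, for each finite $H\leq\Gamma$, to a $1$-Lipschitz map $H\backslash\Gamma\to \bar H\backslash Q$ with $\bar H:=H/(H\cap K)\leq Q$, and I assemble these into a uniformly expansive map
\[F\colon\coprod_{H\leq\Gamma,\,|H|\leq k}H\backslash\Gamma\;\longrightarrow\;\coprod_{H\leq\Gamma,\,|H|\leq k}\bar H\backslash Q.\]
I then apply the Fibering Theorem \cref{3.1.4} to $F$.

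First I would verify that the target has FDC. Since $|\bar H|\leq |H|\leq k$ and $\bar H\leq Q$, each component $\bar H\backslash Q$ of the target is isometric to a component of $\coprod_{H'\leq Q,\,|H'|\leq k}H'\backslash Q$, which has FDC by the fqFDC of $Q$. The target is then a family of subspaces of this space (with possibly many copies of each component), and \cref{3.1.3} yields that it has FDC.

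Next I would verify the fibering condition. Given $R>0$ and a family $\{Z_i\}_{i\in I}$ of subsets of the target with $\diam Z_i\leq R$, each $Z_i$ lies in a single component $\bar{H_i}\backslash Q$ and, after picking any point $\bar{H_i}q_iK\in Z_i$, in the ball $B_R(\bar{H_i}q_iK)$. A direct calculation with the quotient metric, together with left invariance, shows $F^{-1}\bigl(B_R(\bar{H_i}q_iK)\bigr)=H_i\backslash H_iq_iB_R(e)K$. Hence $\coprod_i F^{-1}(Z_i)$ is a family of subspaces of $W:=\coprod_{H\leq\Gamma,\,|H|\leq k,\,q\in\Gamma}H\backslash HqB_R(e)K$, and it suffices to show $W$ has FDC. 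Since $\Gamma$ is discrete and the metric is proper, $B_R(e)=\{\gamma_1,\dots,\gamma_m\}$ is finite, so each component of $W$ is a union of the $m$ subspaces $H\backslash Hq\gamma_jK$; by the Finite Union \cref{3.1.7} it is enough that $\coprod_{j,H,q}H\backslash Hq\gamma_jK$ has FDC. The translation-by-$(q\gamma_j)^{-1}$ isometry identifies $H\backslash Hq\gamma_jK\cong H^{q\gamma_j}\backslash H^{q\gamma_j}K$ where $H^{q\gamma_j}\leq\Gamma$ has order at most $k$, just as in the proof of \cref{ext}. Thus the coproduct is a family of subspaces of $\coprod_{H'\leq\Gamma,\,|H'|\leq k}H'\backslash H'K$, which has FDC by the strong fqFDC of $K$. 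Applying \cref{3.1.3} delivers FDC of this coproduct, and assembling these steps via \cref{3.1.4} shows that $\coprod_{H\leq\Gamma,\,|H|\leq k}H\backslash\Gamma$ has FDC.

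The main obstacle is the bookkeeping of indices: the fibering condition involves arbitrary families $\{Z_i\}_{i\in I}$, so the target-side and preimage-side coproducts contain many copies of the same quotient $\bar H\backslash Q$ or $H'\backslash H'K$. The trick, used twice, is that a family of subspaces of an FDC space is again FDC by \cref{3.1.3}, which is what makes it possible to feed ``many copies'' into the fqFDC assumption on $Q$ and the strong fqFDC assumption on $K$ without needing the subgroups to be indexed by $Q$ or by $K$ in the first place.
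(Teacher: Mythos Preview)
Your proposal is correct and is essentially the paper's proof, just unrolled one level: where the paper applies the fqFDC Fibering Lemma (\cref{fibering}) and fqFDC Finite Union (\cref{finunion}) directly to the equivariant map $\Gamma\to Q$, you work at the FDC level with \cref{3.1.4} and \cref{3.1.7}, carrying out the passage to quotients $H\backslash\Gamma\to\bar H\backslash Q$ by hand. The core computation---identifying preimages of balls with $H\backslash HqB_R(e)K$, writing this as a finite union over $\gamma\in B_R(e)$, and using the isometry $H\backslash Hq\gamma K\cong H^{q\gamma}\backslash H^{q\gamma}K$ to invoke strong fqFDC of $K$---is identical to the paper's.
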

\begin{proof}
$\Gamma\rightarrow Q$ is uniformly expansive and $Q$ has fqFDC. So by \cref{fibering} and \cref{lem:preimage} it suffices to show that for all $r>0$ the space
\[\coprod_{g\in\Gamma}gB_r(e)K\subseteq \bigcup_{\gamma\in B_r(e)}\coprod_{g\in\Gamma}g\gamma K\]
has fqFDC. This follows from \cref{finunion} and the fact that for every $k\in \bbN,\gamma\in B_r(e)$ the space \[\coprod_{|H|\leq k,g\in\Gamma}H\backslash Hg\gamma K=\coprod_{|H|\leq k,g\in\Gamma}g\gamma H^{g\gamma}\backslash H^{g\gamma}K\]
has FDC by assumption.
\end{proof}
\subsection*{Linear groups}
\begin{thm}
\label{lin}
Let $\Gamma$ be a finitely generated subgroup of $GL_n(F)$ where $F$ is a field. Then $\Gamma$ has fqFDC.
\end{thm}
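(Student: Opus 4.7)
The plan is to follow the strategy of \cite[Theorem 3.3]{fdc} establishing FDC for finitely generated linear groups, and to upgrade each step to fqFDC using the permanence results of \cref{perm}, the extension lemma \cref{ext2}, and the direct-union lemma preceding \cref{abelian}.

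First, I would reduce to the case where $F$ is a finitely generated field: the matrix entries of a finite generating set of $\Gamma$ lie in a finitely generated subfield $F_0 \subseteq F$, and $\Gamma \leq GL_n(F_0)$. If $\mathrm{char}\,F > 0$, then $\Gamma$ has finite asymptotic dimension by \cite[Theorem 3.1]{rigidity}, hence fqFDC by \cref{dimension}. It remains to treat $\mathrm{char}\,F = 0$, which I would handle by induction on the transcendence degree $d$ of $F$ over $\bbQ$.

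For the base case $d = 0$, $\Gamma$ is a finitely generated subgroup of $GL_n(K)$ for some number field $K$; up to finite index, $\Gamma$ is $S$-arithmetic and acts properly and cocompactly on a finite-dimensional $\mathrm{CAT}(0)$-space (a product of the symmetric space $GL_n(\bbR)/O(n)$ with finitely many Bruhat--Tits buildings), so it has finite asymptotic dimension and hence fqFDC. For the inductive step, realize $\Gamma \leq GL_n(R)$ for a finitely generated integral domain $R \subseteq F$ and choose a prime ideal $\mathfrak{p} \subseteq R$ such that $R/\mathfrak{p}$ has transcendence degree $d-1$. The reduction map $\rho\colon GL_n(R) \to GL_n(R/\mathfrak{p})$ sends $\Gamma$ onto a finitely generated linear group over a field of smaller transcendence degree, which has fqFDC by induction. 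The kernel $K := \Gamma \cap \ker \rho$ lies in a $\mathfrak{p}$-congruence subgroup of $GL_n(R)$; by localizing at $\mathfrak{p}$ and expressing $K$ as the direct union of its finitely generated subgroups (each virtually nilpotent, hence of finite asymptotic dimension by Gromov's polynomial growth theorem), the direct-union lemma preceding \cref{abelian} gives that $K$ has strong fqFDC. Applying \cref{ext2} to the extension $K \to \Gamma \to \rho(\Gamma)$ then concludes the induction.

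The main obstacle is verifying that the congruence kernel $K$ has \emph{strong} fqFDC and not merely fqFDC, since \cref{ext2} imposes the stronger hypothesis on the kernel. A general $\mathfrak{p}$-congruence subgroup of $GL_n(R)$ is only residually nilpotent, so some algebraic work is needed to present $K$ as a direct union of finitely generated (virtually) nilpotent subgroups; this relies on the Noetherianity of the localization $R_\mathfrak{p}$ together with the standard filtration of the congruence subgroup by its powers, whose successive quotients are abelian. Once this structural reduction is in place, the strong-fqFDC machinery from \cref{ext} and \cref{solv} applies without further subtlety.
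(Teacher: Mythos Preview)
Your strategy differs substantially from the paper's. The paper does not run an induction on transcendence degree via group extensions; it follows \cite[Theorem~3.1]{rigidity} verbatim and isolates the one step that needs upgrading, namely \cite[Lemma~3.9]{rigidity}. That lemma is a metric criterion: if a countable group $G$ carries a left-invariant metric $d'$ of finite asymptotic dimension, and for each $r>0$ another left-invariant metric $d_r$ of finite asymptotic dimension that is proper on $B_{r,d'}(e)$, then $G$ has FDC. The paper proves the fqFDC version of this criterion directly from \cref{fibering} and \cref{dimension}, and then cites \cite{rigidity} for the construction of $d'$ and the $d_r$ from discrete valuations on $F$. Neither \cref{ext2} nor any analysis of congruence subgroups enters the proof of \cref{lin}.

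More importantly, the resolution you propose for the obstacle you yourself flag does not work. The filtration of the $\mathfrak{p}$-congruence subgroup by powers of $\mathfrak{p}$, with abelian successive quotients, shows only that this group is \emph{residually} nilpotent; it does not force finitely generated subgroups to be virtually nilpotent. Concretely, take $R=\bbZ[t]$ (transcendence degree~$1$ over $\bbQ$), the height-one prime $\mathfrak{p}=(t)$, and let $\Gamma\leq SL_2(\bbZ[t])$ be generated by
\[
u=\begin{pmatrix}1&t\\0&1\end{pmatrix},\qquad v=\begin{pmatrix}1&0\\t&1\end{pmatrix}.
\]
Both generators lie in $I+tM_2(\bbZ[t])$, so here $K=\Gamma$. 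Specialising $t\mapsto 2$ maps $\Gamma$ onto the Sanov subgroup of $SL_2(\bbZ)$, which is free of rank~$2$; a two-generated group surjecting onto $F_2$ is itself $F_2$, so $\Gamma$ is free nonabelian---finitely generated and certainly not virtually nilpotent. Localising at $\mathfrak{p}$ changes nothing, since $\Gamma$ already sits inside $I+tM_2(R_\mathfrak{p})$. Thus the direct-union lemma preceding \cref{abelian} does not apply to $K$, strong fqFDC for $K$ is unproven, and the appeal to \cref{ext2} collapses.
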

This theorem is the fqFDC version of \cite[Theorem 3.1]{rigidity}. All steps in the proof hold for fqFDC as well, once we prove the following fqFDC version of \cite[Lemma 3.9]{rigidity}.
\begin{lemma}
Let $G$ be a countable discrete group. Suppose there exists a finite left invariant metric $d'$ on $G$ with the following properties:
\begin{enumerate}
\item $G$ has finite asymptotic dimension with respect to $d'$.
\item For all $r > 0$ there exists $d_r$, a finite left invariant metric on $G$, for which
\begin{enumerate}
\item $G$ has finite asymptotic dimension with respect to $d_r$,
\item $d_r$ is proper when restricted to $B_{r,d'}(e)$, the ball of radius $r$ around $e$ with respect to the metric $d'$.
\end{enumerate}
\end{enumerate}
Then $G$ has fqFDC.
\end{lemma}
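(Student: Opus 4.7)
The plan is to apply the Fibering lemma (\cref{fibering}) to the identity map from $(G,d)$ to $(G,d')$, where $d$ is any finite proper left invariant metric on $G$, so as to reduce fqFDC of $G$ to FDC of a single ball in $(G,d)$. Since $d$ is proper, every $d$-ball in $G$ is finite and hence $d'$-bounded, so the map is $G$-equivariant and uniformly expansive, and by hypothesis~(1) together with \cref{dimension} the target $(G,d')$ has fqFDC. Thus \cref{fibering} reduces the problem to showing, for each $R>0$, that the space $\coprod_{g\in G}(B_{R,d'}(g),d)\subseteq\coprod_{g\in G}(G,d)$, with the componentwise $G$-action $h\cdot(x,g)=(hx,hg)$, has fqFDC.

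A direct computation of the finite quotients is the next step. The $H$-action above is free on the index coordinate, so choosing coset representatives for $H\backslash G$ yields an isometry
\[ H\backslash\coprod_{g\in G}(B_{R,d'}(g),d)\;\cong\;\coprod_{[g]\in H\backslash G}(B_{R,d'}(g),d), \]
and by left invariance of $d$ each summand is isometric to $(B_{R,d'}(e),d)$. Hence the union over $H\leq G$ with $|H|\leq n$ is a disjoint union of isometric copies of the single space $(B_{R,d'}(e),d)$, and by Coarse Invariance (\cref{3.1.3}) it suffices to prove that $(B_{R,d'}(e),d)$ has FDC.

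Finally I would invoke hypothesis~(2) with parameter $r=2R$. The key claim is that the identity map $\id\colon(B_{R,d'}(e),d)\to(B_{R,d'}(e),d_{2R})$ is a coarse equivalence: for $x,y\in B_{R,d'}(e)$ the element $x^{-1}y$ lies in $B_{2R,d'}(e)$, and (a) any $d$-bounded subset of $G$ is finite by properness of $d$, hence $d_{2R}$-bounded, while (b) any $d_{2R}$-bounded subset of $B_{2R,d'}(e)$ is finite by properness of $d_{2R}$ there, hence $d$-bounded. The subspace $(B_{R,d'}(e),d_{2R})\subseteq(G,d_{2R})$ has finite asymptotic dimension by hypothesis~(2a), hence FDC by \cite[Theorem~4.1]{fdc}, and \cref{3.1.3} then transports FDC back to $(B_{R,d'}(e),d)$. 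The main subtlety lies in this final coarse equivalence: both parts of hypothesis~(2) must be used simultaneously to get the two directions of the coarse embedding out of a single identity map, and the enlargement from $d_R$ to $d_{2R}$ is forced by the fact that the differences $x^{-1}y$ appearing in the comparison live in the doubled ball.
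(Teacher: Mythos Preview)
Your proof is correct and follows essentially the same route as the paper: apply the fibering lemma to the identity $(G,d)\to(G,d')$, reduce the preimage condition to a single ball $B_{R,d'}(e)$ with the metric $d$, and then show this ball is coarsely equivalent to $(B_{R,d'}(e),d_{2R})$ via the same two-sided argument using properness of $d$ and of $d_{2R}|_{B_{2R,d'}(e)}$. The only cosmetic difference is that you compute the finite quotients $H\backslash\coprod_g B_{R,d'}(g)$ explicitly and invoke FDC, whereas the paper phrases the reduction via finite asymptotic dimension and \cref{dimension}.
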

Condition (b) in the lemma means precisely that $B_{s,d_r}(e)\cap B_{r,d'}(e)$ is finite for every $s > 0$. In the statement of \cite[Lemma 3.9]{rigidity} pseudo-metrics are used but if $(X,d)$ is a pseudo-metric space, then $(X,d')$ with
\[d'(x,y):=\left\{\begin{matrix}
0&\text{if }x=y\\
\max\{1,d(x,y)\}&\text{if }x\neq y
\end{matrix}\right.\]
is a metric space that is coarsely equivalent to $(X,d)$. Since finite asymptotic dimension is a coarse invariant, we can use this to pass to metrics instead of pseudo-metrics.

In the proof we will use that finite asymptotic dimension implies fqFDC\linebreak(see \cref{dimension}).
\begin{proof}
Fix a finite proper left invariant metric $d$ on $G$. By \cref{fibering}, applied to the identity map $(G,d)\rightarrow (G,d')$, it suffices to show that for every $r > 0$ the space $\coprod_{g\in G}B_{r,d'}(g)=\coprod_{g\in G}gB_{r,d'}(e)$ has finite asymptotic dimension when equipped with the metric $d$. Because all spaces $gB_{r,d'}(e)$ are isometric to $B_{r,d'}(e)$ it suffices to show that $B_{r,d'}(e)$ has finite asymptotic dimension.\\
Let $r > 0$. Pick $d_{2r}$ as in the assumptions. The ball $B_{r,d'}(e)\subseteq G$  has finite asymptotic dimension with respect to the metric $d_{2r}$.\\ 
Thus, it remains to show that the metrics $d$ and $d_{2r}$ on $B_{r,d'}(e)$ are coarsely equivalent.\\
Since balls in $G$ with respect to $d$ are finite, we easily see that for every $s$ there exists $s'$ such that if $d(g,h)\leq s$, then $d_{2r}(g,h)\leq s'$; this holds for every $g$ and $h$ in $G$. Conversely, for every $s$ the set $B_{2r,d'}(e)\cap B_{s,d_{2r}}(e)$ is finite by assumption, and we obtain $s'$ such that for every $g$ in this set $d(g,e)\leq s'$. If now $g,h\in B_{r,d'}(e)$ are such that $d_{2r}(g,h)\leq s$, then $g^{-1}h\in B_{s,d_{2r}}(e)$ and $d(g,h)= d(g^{-1}h,e)\leq s'$.
\end{proof}
To generalize this to arbitrary commutative rings we need Lemma 5.2.3 from \cite{fdc}:
\begin{lemma}
Let $R$ be a finitely generated commutative ring with unit and let $\mcn$ be the nilpotent radical of $R$,
\[\mcn=\{r\in R\mid \exists n:r^n=0\}.\]
The quotient ring $S=R/\mcn$ contains a finite number of prime ideals $\mcp_1,..,\mcp_n$ such that the diagonal map 
\[S\rightarrow S/\mcp_1\oplus\ldots\oplus S/\mcp_n\]
embeds $S$ into a finite direct sum of domains.
\end{lemma}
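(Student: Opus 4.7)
The plan is to invoke two standard facts from commutative algebra: every finitely generated commutative ring with unit is Noetherian (by Hilbert's basis theorem), and in any Noetherian ring the set of minimal prime ideals is finite. The embedding will then drop out of the identification of the nilradical with the intersection of all prime ideals.

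First I would observe that $R$ is a quotient of a polynomial ring $\bbZ[x_1,\ldots,x_k]$ for some $k$, hence Noetherian. Consequently $S = R/\mcn$ is Noetherian as well, and by construction $S$ is reduced, i.e.\ its nilradical is zero. Next I would apply the classical fact that in any commutative ring the nilradical equals the intersection $\bigcap_{\mcp\in\mathrm{Spec}(S)}\mcp$ of all prime ideals. Since $S$ is reduced, this intersection vanishes in $S$.

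Using the Noetherian property, I would then deduce that $S$ has only finitely many minimal prime ideals $\mcp_1,\ldots,\mcp_n$ (for instance via a primary decomposition of the zero ideal, or by a standard Noetherian induction argument on the set of ideals which fail to contain a finite intersection of primes). Because every prime contains a minimal one, we still have $\bigcap_{i=1}^{n}\mcp_i = 0$ in $S$. Finally I would consider the diagonal ring homomorphism
\[
\phi\colon S \longrightarrow S/\mcp_1 \oplus \cdots \oplus S/\mcp_n,\qquad s\mapsto (s+\mcp_1,\ldots,s+\mcp_n).
\]
Its kernel is exactly $\bigcap_{i=1}^n\mcp_i = 0$, so $\phi$ is injective, and each summand $S/\mcp_i$ is a domain since $\mcp_i$ is prime.

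There is no substantive obstacle here; the argument is bookkeeping of classical results. The only point that deserves emphasis is that the hypothesis ``finitely generated'' is used through Noetherianity, and is precisely what guarantees that \emph{finitely many} primes suffice to witness $\bigcap\mcp = 0$; for a general reduced commutative ring one still has $\bigcap_{\mcp}\mcp = 0$, but no finite subfamily of primes need have trivial intersection, and the conclusion would fail.
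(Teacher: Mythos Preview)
Your argument is correct and is the standard proof: Noetherianity of $R$ (hence of $S$) via Hilbert's basis theorem, finiteness of the set of minimal primes in a Noetherian ring, and the identification of the nilradical with the intersection of all (minimal) primes together give the injectivity of the diagonal map. There is nothing to compare against, since the paper does not prove this lemma but merely quotes it as \cite[Lemma~5.2.3]{fdc}; your proof supplies exactly the details one would expect behind that citation.
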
 
The next theorem is the fqFDC version of \cite[Theorem 5.2.2]{fdc}. We need to assume that $\Gamma$ is finitely generated because we do not know if fqFDC is closed under unions.
\begin{thm}
Let $R$ be a commutative ring with unit and let $\Gamma$ be a finitely generated subgroup of $GL(n,R)$, then $\Gamma$ has fqFDC.
\end{thm}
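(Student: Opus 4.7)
The plan is to adapt the proof of \cite[Theorem~5.2.2]{fdc} to the fqFDC setting, relying on the extension principle \cref{ext2}, the strong fqFDC of solvable groups (\cref{solv}), and \cref{lin}.

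First I would reduce to the case where $R$ is finitely generated: since $\Gamma$ is generated by finitely many matrices, the entries of these generators and of their inverses all lie in a finitely generated subring $R_0\subseteq R$, so $\Gamma\leq GL_n(R_0)$ and we may assume $R$ itself is finitely generated commutative, hence Noetherian. Then the nilradical $\mcn$ is nilpotent, $\mcn^m=0$ for some $m$, and a routine commutator calculation shows that the congruence kernel $\{I+N\mid N\in M_n(\mcn)\}$ of the reduction $\pi\colon GL_n(R)\to GL_n(R/\mcn)$ is a nilpotent group, with its $k$-th term of the lower central series contained in $\{I+N\mid N\in M_n(\mcn^k)\}$. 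Consequently $K:=\Gamma\cap\ker\pi$ is a finitely generated solvable group, so by \cref{solv} it has strong fqFDC. By \cref{ext2} applied to $K\to\Gamma\to\Gamma/K$, it suffices to prove fqFDC of $\bar\Gamma:=\Gamma/K$.

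Applying the preceding lemma to $R$, we embed $R/\mcn\hookrightarrow\bigoplus_{i=1}^k R/\mcp_i\hookrightarrow\bigoplus_{i=1}^k F_i$ with $F_i=\mathrm{Frac}(R/\mcp_i)$, which presents $\bar\Gamma$ as a finitely generated subgroup of $\prod_{i=1}^k GL_n(F_i)$. So the problem reduces to showing that any such subgroup has fqFDC, which I would attack by induction on $k$: the base case $k=1$ is exactly \cref{lin}, and in the induction step I would project a finitely generated $H\leq\prod_{i=1}^k GL_n(F_i)$ to the last factor to obtain an extension $\ker p_k\to H\to p_k(H)$ whose image is a finitely generated linear group over $F_k$ (fqFDC by \cref{lin}) and whose kernel is a finitely generated subgroup of $\prod_{i<k}GL_n(F_i)$ (fqFDC by the inductive hypothesis).

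The hard part will be that \cref{ext2} demands \emph{strong} fqFDC of the kernel, not merely fqFDC, so the naive iteration does not close. Two possible resolutions present themselves: (i) strengthen \cref{lin} (and the inductive hypothesis) to strong fqFDC, which is plausible because the proof of \cref{lin} rests on intrinsic metric data on the linear group and should be largely insensitive to the ambient normal-supergroup used to test strong fqFDC; or (ii) first partition the minimal primes $\mcp_i$ by the characteristic of $R/\mcp_i$ and, within each characteristic $c$, embed all the relevant $F_i$ into a single algebraically closed field $\Omega_c$, giving an embedding $\prod_{i\colon\mathrm{char}\,F_i=c}GL_n(F_i)\hookrightarrow GL_{nk_c}(\Omega_c)$. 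Since $R$ is finitely generated, only finitely many characteristics can occur, so after this regrouping we are reduced to a short final induction on the number of characteristics, each individual factor being handled by a single invocation of \cref{lin}.
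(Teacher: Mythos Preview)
Your reduction to a finitely generated ring, the use of the nilpotent congruence kernel together with \cref{solv} and \cref{ext2}, and the embedding of the quotient into a finite product $\prod_i GL_n(F_i)$ all match the paper's proof exactly. One small inaccuracy: you call $K=\Gamma\cap\ker\pi$ a \emph{finitely generated} solvable group, but a normal subgroup of a finitely generated group need not be finitely generated; fortunately \cref{solv} applies to all solvable groups, so this is harmless.

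The difficulty you flag at the product step is genuine, and the paper does not address it explicitly either: it simply writes ``So the quotient has fqFDC by \cref{lin}'', even though \cref{lin} as stated treats only a single field. Your option~(ii), however, does not close the gap on its own. After regrouping by characteristic you are left with a product $\prod_c GL_{nk_c}(\Omega_c)$ over finitely many distinct characteristics, and your ``short final induction'' on the number of characteristics again needs \cref{ext2}, hence \emph{strong} fqFDC of the kernel---which is a subgroup of a product of linear groups over the remaining characteristics. That is precisely the strengthening you were trying to avoid, so option~(ii) collapses back to option~(i). Moreover, the kernel of the projection to one characteristic block need not be finitely generated, so you cannot even invoke \cref{lin} inductively for it without further work.

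If you want a self-contained argument, option~(i) is the right direction: check that the abstract criterion replacing \cite[Lemma~3.9]{rigidity} (the lemma immediately after \cref{lin}) applies to a finitely generated subgroup of a finite product $\prod_i GL_n(F_i)$. This is plausible because the left-invariant metrics $d'$ and $d_r$ used in the proof of \cite[Theorem~3.1]{rigidity} are built from norms and valuations on the individual fields and can simply be summed over the factors, preserving both the finite-asymptotic-dimension conditions and properness on balls. This is presumably what the paper's bare citation of \cref{lin} is meant to convey.
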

\begin{proof}
Because $\Gamma$ is finitely generated we can assume that $R$ is finitely generated as well. With $\mcn$ and $S$ as in the previous lemma, we have an exact sequence
\[1\rightarrow I+M_n(\mcn)\rightarrow GL_n(R)\rightarrow GL_n(S)\to 1\]
in which $I+M_n(\mcn)$ is nilpotent and therefore has strong fqFDC by \cref{solv}. In the notation of the previous lemma we have embeddings
\[GL_n(S)\hookrightarrow GL_n(S/\mcp_1)\times\ldots\times GL_n(S/\mcp_n)\hookrightarrow GL_n(Q(S/\mcp_1))\times\ldots\times GL_n(Q(S/\mcp_n))\]
where $Q(S/\mcp_i)$ is the quotient field of $S/\mcp_i$.\\
So the quotient has fqFDC by \cref{lin}.\\
Now the theorem follows from \cref{ext2}.
\end{proof}
\section{Controlled Algebra}
Let $X$ be a proper metric space, $\mcA$ an additive category and $\Gamma$ a group. The definition of a geometric module in this article is a slight variation of the definitions in \cite{bartels} and \cite{k-theory}. The first definition of geometric groups appeared in \cite{MR0242151} and of geometric modules in \cite{MR549490} and \cite{MR669423}. The first definition of continuous control is in \cite{MR1277522}.
\begin{defi}
Let $Z:=\Gamma\times X\times[0,1)$. A \emph{geometric $\mcA$-module} $M$ over $X$ is given by a sequence of objects $(M_z)_{z \in Z}$ in $\mcA$, subject to the following conditions:
\begin{enumerate}
\item The image of $\supp(M)=\{z\in Z\mid M_z\neq 0\}$ under the projection \[Z\rightarrow X\times[0,1)\] is locally finite.
\item \label{objects2} For every $x\in X,t\in[0,1)$ the set $\supp(M)\cap(\Gamma\times\{x\}\times\{t\})$ is finite.
\end{enumerate}
A \emph{morphism} $\phi\colon M\rightarrow N$ between geometric modules $M,N$ is a sequence\linebreak$(\phi_{x,y}\colon M_y\rightarrow N_x)_{(x,y)\in Z^2}$ of morphisms in $\mcA$, subject to the following conditions:
\begin{enumerate}
\item $\phi$ is \emph{continuously controlled at 1}, i.e. for each $x\in X$ and each neighborhood $U$ of $(x,1)$ in $X\times[0,1]$ there exists a neighborhood $V$ of $(x,1)$ in $X\times [0,1]$ such that for all $\gamma,\gamma'\in\Gamma,v\in V,y\notin U,$ $\phi_{(\gamma,v),(\gamma',y)}=\phi_{(\gamma',y),(\gamma,v)}=0$.
\item For every $z\in Z$ the set $\{z'\in Z\mid \phi_{z,z'}\neq 0\text{ or }\phi_{z',z}\neq0\}$ is finite.
\item $\phi$ is \emph{R-bounded} for some $R>0$, i.e. $d(x,x')>R$ implies $\phi_{(\gamma,x,t),(\gamma',x',t')}=0$ for all $\gamma,\gamma'\in\Gamma,x,x'\in X,t,t'\in[0,1)$.
\end{enumerate}
Let $\mcA_\Gamma(X)$ denote the category of geometric $\mcA$-modules over $X$ and their morphisms. The composition of morphisms is given by matrix multiplication. $\mcA_\Gamma(X)$ is an additive category with pointwise addition.
\end{defi}
\begin{rem}
Let $\mcA_c(X)\subseteq\mcA_\Gamma(X)$ be the full additive subcategory with objects having support in $\{e\}\times X\times[0,1)$. This coincides with the definition of $\mcA_c(X)$ in \cite{k-theory}.\\
The inclusion $\mcA_c(X)\hookrightarrow\mcA_\Gamma(X)$ is an equivalence because of condition (2) on the objects of $\mcA_\Gamma(X)$.
\end{rem}
\begin{nota}
For $Y\subseteq X$ and $M\in\mcA_\Gamma(X)$ we define $M|_Y$ by
\[(M|_Y)_{(\gamma,x,t)}:=\left\{\begin{matrix}
M_{(\gamma,x,t)}&\text{if }x\in Y\\0&\text{else}
\end{matrix}\right.,\quad\forall \gamma\in\Gamma,x\in X,t\in[0,1)\]
\end{nota}
\begin{defi}
\label{def:kar}
Let $\mcU$ be a full additive subcategory of an additive category $\mcA$. We say that $\mcA$ is \emph{$\mcU$-filtered} if for all $A\in \mcA, U\in\mcU$ and all morphisms $f\colon A\to U,$ $g\colon U\to A$, we have a decomposition $A\cong A^-\oplus A^+$ with $A^-\in\mcU$ and factorizations
\[\xymatrix{U\ar[rr]^g\ar[rd]&&A\\&A^-\ar[ru]^\iota&
}\quad \text{and}\quad \xymatrix{A\ar[rr]^f\ar[rd]^p&&U\\&A^-\ar[ru]&
}\]
where $\iota,p$ is the inclusion resp. the projection. Note that we need one decomposition through which both maps factor.
\end{defi}
\begin{nota}
For two decompositions $A\cong A_1\oplus A_2$ and $A\cong A_1'\oplus A_2'$ we write $A_1\subseteq A_1'$ if the map
\[A_1\xrightarrow{ \iota}A\xrightarrow{p}A_2'\]
is zero, where $\iota,p$ is the inclusion resp. the projection.
\end{nota}
\begin{lemma}
If $\mcA$ is $\mcU$-filtered as defined above, then it is $\mcU$-filtered in the usual sense (\cite{karoubi}), i.e. there exists a partially ordered, directed index set $I$ and for each $A\in\mcA, i\in I$ there are decompositions $A\cong A_i^-\oplus A_i^+$ with $A^-_i\in\mcU$, such that
\begin{enumerate}
\item For every morphism $f\colon A\to U$ with $U\in\mcU$ there exists $i\in I$ such that $f$ factors as
\[\xymatrix{A\ar[rr]^f\ar[rd]^{p_i}&&U\\&A_i^-\ar[ru]&
},\]
where $p_i$ is the projection.
\item For every morphism $g\colon U\to A$ with $U\in\mcU$ there exists $i\in I$ such that $g$ factors as
\[\xymatrix{U\ar[rr]^g\ar[rd]&&A\\&A_i^-\ar[ru]^{\iota_i}&
},\]
where $\iota_i$ is the inclusion.
\item For all $i,j\in I$ we have
\[i\leq j\quad\Longleftrightarrow\quad \forall A\in\mcA:~A_j^+\subseteq A_i^+\text{ and }A_i^-\subseteq A_j^-.\]
\item For all $A,B\in\mcA$ and all $(i,j)\in I^2$ there exists an index $k\in I$ such that \mbox{$(A\oplus B)^+_k\subseteq A_i^+\oplus B_j^+$} and $A_i^-\oplus B_j^-\subseteq (A\oplus B)^-_k$. And for all $A,B\in\mcA$ and $k\in I$ there exists $(i,j)\in I^2$ such that $A_i^+\oplus B_j^+\subseteq (A\oplus B)_k^+$ and $(A\oplus B)_k^-\subseteq A_i^-\oplus B_j^-$.
\end{enumerate}
\end{lemma}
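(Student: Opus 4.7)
The plan is to construct $I$ by iterating the single-pair filtering hypothesis. Let $I$ be the set of finite sequences $i = ((B_1, f_1, g_1, U_1), \dots, (B_n, f_n, g_n, U_n))$ with $B_k \in \mcA$, $U_k \in \mcU$, $f_k \colon B_k \to U_k$ and $g_k \colon U_k \to B_k$, including the empty sequence. For every $A \in \mcA$ and every $i \in I$ I define a decomposition $A = A_i^- \oplus A_i^+$ inductively: set $A_0^- := 0$ and $A_0^+ := A$; at step $k$, if $B_k \neq A$ carry the decomposition through unchanged; if $B_k = A$, apply the lemma's hypothesis to the restricted pair $(f_k \circ \iota_{k-1},\, p_{k-1} \circ g_k)$ on $A_{k-1}^+$ to obtain $A_{k-1}^+ \cong V_k^- \oplus V_k^+$ with $V_k^- \in \mcU$, and set $A_k^- := A_{k-1}^- \oplus V_k^-$ and $A_k^+ := V_k^+$. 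Since $\mcU$ is a full additive subcategory of $\mcA$, $A_i^- := A_n^-$ lies in $\mcU$.

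Equip $I$ with the relation defined in (3), which is tautologically a preorder; taking the quotient by the equivalence of producing isomorphic decompositions of every object yields a genuine partial order. Conditions (1) and (2) follow at once by taking the singletons $i = ((A, f, 0, U))$ and $i = ((A, 0, g, U))$, since the first inductive step then factors the given morphism through $A_i^-$. For directedness, given $i, j \in I$, the concatenation $k := i \cdot j$ satisfies $i \leq k$ by construction, because processing the entries of $j$ after those of $i$ only refines the remaining summand $A_i^+$, enlarging the minus-part and shrinking the plus-part. To obtain $j \leq k$ I need that up to isomorphism the decomposition is independent of the order in which entries are processed, which reduces to a combinatorial swap lemma for two adjacent entries.

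For the direct-sum compatibility (4), given $i, j \in I$, I build $k$ from the concatenation $i \cdot j$ by appending, for each entry $(A', f, g, U)$ in $i$, the extra entry $(A' \oplus B,\, f \circ \pr_{A'},\, \mathrm{inc}_{A'} \circ g,\, U)$, and symmetrically for each entry of $j$; this forces the $(A \oplus B)_k^-$-summand to contain $A_i^- \oplus B_j^-$ in the sense of the notation preceding the lemma, and analogously for the plus-summands. The reverse part (given $k$, produce $(i,j)$) is obtained by restricting each entry of $k$ through the canonical split morphisms $A, B \rightleftarrows A \oplus B$. The principal obstacle I foresee is precisely the order-independence of the iterated splitting, needed both for directedness and for (4): it must be established either by a direct combinatorial swap argument — exploiting that once a pair $(f_k,g_k)$ has been absorbed into $V_k^-$, any subsequent refinement leaves $V_k^-$ untouched — or by explicitly passing to the quotient of $I$ by isomorphism of induced decompositions, so that the preorder becomes a partial order with all the desired properties.
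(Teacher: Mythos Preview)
Your framework differs from the paper's: the paper takes $I := \prod_{A \in \mcA} I_A$, where $I_A$ is the poset of decompositions $A \cong A^- \oplus A^+$ with $A^- \in \mcU$ (modulo the obvious equivalence), and proves each $I_A$ is directed by a single application of the hypothesis to the pair of maps $\iota_i + \iota_j \colon A_i^- \oplus A_j^- \to A$ and $(p_i,p_j) \colon A \to A_i^- \oplus A_j^-$. Axioms (1)--(4) then fall out almost formally from the product structure.

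Your sequence-based index set is a reasonable alternative idea, but the argument for directedness has a genuine gap, and you have correctly located it. The claim that concatenation $k = i \cdot j$ dominates $j$ rests on the ``order-independence'' of the iterated splitting, and this is \emph{not} a consequence of the hypothesis. The filtering assumption only asserts the \emph{existence} of some decomposition factoring a given pair $(f,g)$; it is not functorial or canonical. When you process the entries of $j$ after having already split off $A_i^-$, you are applying the hypothesis to the \emph{restricted} pair $(f_k \circ \iota_{A_i^+},\, p_{A_i^+} \circ g_k)$ on $A_i^+$, which is a different input from the unrestricted pair $(f_k,g_k)$ on $A$ used when processing $j$ alone. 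The two outputs need bear no relation, so there is no reason for $A_j^- \subseteq A_{i\cdot j}^-$ or $A_{i\cdot j}^+ \subseteq A_j^+$ to hold. Your proposed ``swap lemma'' would have to manufacture a comparison between two independent, non-canonical choices, and I do not see how to do this; passing to the quotient poset does not help, since the problem is the existence of an upper bound, not the antisymmetry of the order.

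Your index set can in fact be made directed, but not via concatenation: given $i,j$, let $k$ consist of one entry $(A,\,(p_i^A,p_j^A),\,\iota_i^A + \iota_j^A,\,A_i^- \oplus A_j^-)$ for each of the finitely many objects $A$ touched by $i$ or $j$. Processing this single entry is exactly the paper's one-shot application of the hypothesis and yields a decomposition dominating both $A_i$ and $A_j$. Once you do this, however, you have essentially rediscovered the paper's argument, and the sequence formalism becomes unnecessary overhead; the product-of-posets construction is both shorter and avoids the order-independence issue entirely. The same remark applies to your treatment of (4).
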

\begin{proof}
For $A\in \mcA$ let $I_A'$ denote the set of all possible decompositions $A\cong A^-\oplus A^+$ with $A^-\in\mcU$. We define a relation on $I_A'$ by $(A\cong A^-\oplus A^+)\leq (A\cong (A')^-\oplus (A')^+)$ if $A^-\subseteq (A')^-$ and $(A')^+\subseteq A^+$. Let $I_A$ be the quotient of $I_A'$ where we identify $i,j\in I_A'$ if $i\leq j$ and $j\leq i$. With this relation $I_A$ is a poset. For every $i\in I_A$ we choose a preimage $(A\cong A^-\oplus A^+)\in I_A'$ and define the decomposition $A\cong A_i^-\oplus A_i^+$ to be $A\cong A^-\oplus A^+$. This way for $i,j\in I_A$ we have
\[i\leq j\quad\Longleftrightarrow\quad A_j^+\subseteq A_i^+\text{ and }A_i^-\subseteq A_j^-.\]

Let $A\in\mcA,i,j\in I_A$. By assumption there exists a decomposition $A\cong V\oplus A'$  with $V\in\mcU$ such that the map $A_i^-\oplus A_j^-\xrightarrow{\iota_i+\iota_j}A$ factors as
\[\xymatrix{A_i^-\oplus A_j^-\ar[rr]^{\iota_i+\iota_j}\ar[rd]&&A\\&V\ar[ru]^\iota&}\]
and the map $A\xrightarrow{(p_i,p_j)}A_i^-\oplus A_j^-$ factors as
\[\xymatrix{A\ar[rr]^{(p_i,p_j)}\ar[rd]^p&&A_i^-\oplus A_j^-\\&V\ar[ru]&},\]
where $\iota,p$ is the inclusion resp. the projection.
This implies $A_i^-,A_j^-\subseteq V$ and $A'\subseteq A_i^+,A_j^+$. Therefore, $i,j\leq[A\cong V\oplus A']$ and $I_A$ is directed. Define $I:=\prod_{A\in\mcA}I_A$ with $(i_A)_A\leq (i'_A)_A$ if $i_A\leq i'_A$ for all $A\in\mcA$. For $(i_{A'})_{A'\in\mcA}\in I$ and $A\in\mcA$ define the decomposition $A\cong A_i^-\oplus A_i^+$ to be the decomposition $A\cong A_{i_A}^-\oplus A_{i_A}^+$. With this definition $I$ is a directed poset and axioms (1)-(3) are satisfied. It remains to prove axiom (4).

Let $A,B\in\mcA,(i,j)\in I$ then there exists $k\in I$ such that $A_i^-\oplus B_j^-\to A\oplus B$ and $A\oplus B\to A_i^-\oplus B_j^-$ factor over $(A\oplus B)_k^-$. As above this implies that we have $(A\oplus B)^+_k\subseteq A_i^+\oplus B_j^+$ and $A_i^-\oplus B_j^-\subseteq (A\oplus B)_k^-$. 
For $A,B\in\mcA,k\in I$ there exist $i,j\in I$ such that $(A\oplus B)_k^-\to A\oplus B\to A$ and $A\to A\oplus B\to (A\oplus B)_k^-$ factor over $A_i^-$ and $(A\oplus B)_k^-\to A\oplus B\to B$ and $B\to A\oplus B\to (A\oplus B)_k^-$ factor over $B_j^-$. Again this implies that $A_i^+\oplus B_j^+\subseteq (A\oplus B)_k^+$ and $(A\oplus B)_k^-\subseteq A_i^-\oplus B_j^-$. 
\end{proof}~
\begin{rem}~
\label{rem:kar}
\begin{enumerate}
\item Obviously also the other direction holds, i.e. \cref{def:kar} is equivalent to the usual definition of $\mcU$-filtered.
\item If $\mcU$ is a full additive subcategory of $\mcA$, then the quotient $\mcA/\mcU$ is defined as the category having the same objects as $\mcA$ and morphisms being equivalence classes of morphisms of $\mcA$, where $f,g\colon A\rightarrow B$ are equivalent if $f-g$ factors through some $U\in\mcU$.
\item Let $\mcA$ be an idempotent complete additive category and $\mcU$ a full subcategory. Suppose for all objects $A\in\mcA, U\in\mcU$ and every morphism $f\colon A\to \mcU$ there is a decomposition $A=A^-\oplus A^+$ with $A^-\in\mcU$ such that $f$ factors as
\[\xymatrix{A\ar[rr]^f\ar[rd]^p&&U\\&A^-\ar[ru]&
}.\]
In \cite[Theorem 2.1]{MR2079996} Schlichting proves that the sequence $\mcU\to\mcA\to\mcA/\mcU$ induces a homotopy fibration sequence in $K$-theory. 
\item With \cref{def:kar} it is easy to see, that if $\mcU_1\subseteq \mcU_2\subseteq \mcA$ are full subcategories such that $\mcU_1\subseteq\mcU_2$ is an equivalence of categories and $\mcA$ is $\mcU_2$-filtered, then $\mcA$ is also $\mcU_1$-filtered.
\item For controlled categories it is usually shown that for $Y\subseteq X$ the category $\mcA_c(X)$ is $\mcA_c^+(Y)$-filtered, where $\mcA_c^+(Y)$ is the full subcategory consisting of all objects supported "near" $Y$ and that the inclusion $\mcA_c(Y)\to\mcA_c^+(Y)$ is an equivalence of categories, see for example \cite[Lemma 3.6]{squeezing}. By the above this shows that $\mcA_c(X)$ is also $\mcA_c(Y)$-filtered.
\end{enumerate}
\end{rem}
By $K$-theory of an additive category we will always mean the non-connective \mbox{$K$-theory} spectrum \cite{MR802790}.
\begin{thm}[{\cite[Theorem 1.28]{MR1341817}}]
If $\mcA$ is $\mcU$-filtered, then 
\[\bbK(\mcU)\rightarrow \bbK(\mcA)\rightarrow\bbK(\mcA/\mcU)\]
is a homotopy fibration sequence.
\end{thm}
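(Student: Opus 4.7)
The plan is to follow the Cárdenas--Pedersen strategy for producing fibration sequences from Karoubi-filtered categories. First, I would verify that $\mcA/\mcU$ as described in \cref{rem:kar}(2) is a well-defined additive category: the morphisms factoring through some $U\in\mcU$ form an additive subgroup of $\Hom_{\mcA}(A,B)$ (closed under sums because direct sums of objects of $\mcU$ remain in $\mcU$), so the quotient inherits an additive structure. The functors $\iota\colon \mcU\hookrightarrow \mcA$ and $\pi\colon \mcA\to \mcA/\mcU$ are additive, and the composite $\pi\circ\iota$ sends every $U\in\mcU$ to an object isomorphic to zero in $\mcA/\mcU$, since $\id_U$ trivially factors through $U$. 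Consequently $\bbK(\pi)\circ\bbK(\iota)$ is null-homotopic, yielding a canonical map from $\bbK(\mcU)$ to the homotopy fiber of $\bbK(\pi)$.

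The main step is to show this map is a weak equivalence. Following the Pedersen--Weibel approach, I would construct an auxiliary ``germs'' category $\mcA^{\infty}$ whose objects are sequences $(A_n)_{n\in\bbN}$ in $\mcA$ equipped with a compatible ascending chain of decompositions $A_n\cong A_{n,i_n}^-\oplus A_{n,i_n}^+$ indexed by the directed poset $I$ from the preceding lemma, and whose morphisms satisfy a ``locally finite modulo $\mcU$'' condition. The key technical point is that the two-sided factorization in \cref{def:kar} (the \emph{same} decomposition handles both $f\colon A\to U$ and $g\colon U\to A$) propagates to $\mcA^{\infty}$ and ensures that $\mcA^{\infty}$ is \emph{flasque}: the shift $(A_n)_n\mapsto(A_{n+1})_n$ together with componentwise direct sum gives an Eilenberg swindle, so $\bbK(\mcA^{\infty})\simeq\ast$. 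The inherited filtration $\mcU^{\infty}\to \mcA^{\infty}\to \mcA^{\infty}/\mcU^{\infty}$, together with a cofinality argument identifying the germ quotient with a delooping of the original quotient, then forces $\bbK(\mcA^{\infty}/\mcU^{\infty})$ to be a delooping of $\bbK(\mcU)$; unwinding the resulting connecting map produces the desired fibration sequence.

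The principal obstacle is precisely this flasqueness-and-cofinality bookkeeping: one must verify that the two-sided filtered condition survives at the germ level, so that the Eilenberg swindle can be performed while preserving the factorizations needed to recognise the quotient spectrum on the nose rather than merely up to idempotent completion. It is here that the bidirectional hypothesis of \cref{def:kar} is used in a way that Schlichting's one-sided condition (cited in \cref{rem:kar}(3)) only achieves after idempotent completion; for a general additive $\mcA$, the full two-sided factorization is what allows the identification to be made in non-connective $K$-theory, and it is what the Cárdenas--Pedersen theorem crucially exploits.
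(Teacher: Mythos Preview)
The paper does not prove this theorem at all: it is quoted verbatim from C\'ardenas--Pedersen \cite[Theorem~1.28]{MR1341817} and used as a black box. So there is no ``paper's own proof'' to compare against; the only content the paper contributes around this statement is the preceding lemma showing that \cref{def:kar} implies the usual Karoubi-filtration axioms, after which the cited result applies directly.

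Your sketch is a plausible outline of how one might argue along the Pedersen--Weibel delooping philosophy, but as written it is too vague to count as a proof. The actual C\'ardenas--Pedersen argument does not proceed via an ad hoc ``germs'' category $\mcA^{\infty}$ of sequences as you describe; rather, it sets up a Waldhausen category structure in which the cofibrations are the admissible monomorphisms coming from the filtered decompositions, checks that the sequence $\mcU\to\mcA\to\mcA/\mcU$ satisfies the hypotheses of Waldhausen's fibration theorem (this is where the two-sided factorization is used, to ensure that every map in $\mcA$ whose image in $\mcA/\mcU$ is an isomorphism is a weak equivalence in the relevant Waldhausen structure), and then handles the non-connective extension via the Pedersen--Weibel suspension $\Sigma\mcA$. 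Your step ``cofinality argument identifying the germ quotient with a delooping of the original quotient'' is the place where real work happens, and you have not indicated how the identification is made or why the swindle respects the filtration in the way needed. If you want to supply a proof rather than cite one, you should either follow the Waldhausen-fibration route of \cite{MR1341817} or, if you prefer the idempotent-complete setting, invoke Schlichting's localization theorem as in \cref{rem:kar}(3) after passing to idempotent completions and then arguing that non-connective $K$-theory is insensitive to that passage.
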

\begin{defi}[{\cite[Definition 1.1]{MR802790}}]
An additive category $\mcA$ is said to be \emph{filtered} if there is an increasing filtration
\[F_0(A,B)\subseteq F_1(A,B)\subseteq\ldots\subseteq F_n(A,B)\subseteq\ldots\]
on $\hom(A,B)$ for every pair of objects $A,B\in\mcA$. Each $F_i(A,B)$ has to be an additive subgroup of $\hom(A,B)$ and we must have $\bigcup_{i\in\bbN} F_i(A,B)=\hom(A,B)$. We require the zero and identity maps to be in the zeroth filtration degree and for $f\in F_i(A,B)$ and $g\in F_j(B,C)$ the composition $g\circ f$ to be in $F_{i+j}(A,C)$. If $f\in F_i(A,B)$, we say that $f$ has \emph{(filtration) degree} $i$.
\end{defi}
\begin{rem}
We do not demand projections $A\oplus B\rightarrow A$ and inclusions $A\rightarrow A\oplus B$ to have degree zero because then we either have to specify choices for the sums or every isomorphism would have degree zero.
\end{rem}
\begin{defi}
For filtered additive categories $\{\mcA_i\}_{i\in I}$ we define $\prod_{i\in I}^{bd}\mcA_i$ to be the subcategory of $\prod_{i\in I}\mcA_i$ containing all objects and those morphisms $\phi=\{\phi_i\}_{i\in I}$ such that for some $n\in \bbN$, the morphism $\phi_i$ has degree $n$ for all $i\in I$.\\
For a metric space $X$ the categories $\mcA_\Gamma(X)$ and $\mcA_c(X)$ are filtered by defining a morphism $f$ to be of degree $n$ if it is $n$-bounded.
\end{defi}
\begin{prop}
\label{prop:5.8}
Let $\{Y_i\subseteq X_i\}_{i\in I}$ be a family of metric spaces with subspaces. Then the inclusion $\prod_{i\in I}^{bd}\mcA_\Gamma(Y_i)\hookrightarrow \prod_{i\in I}^{bd}\mcA_\Gamma(X_i)$ is a Karoubi filtration. We will denote the quotient by $\prod_{i\in I}^{bd}\mcA_\Gamma(X_i,Y_i)$. The same holds when $\mcA_\Gamma$ is replaced by $\mcA_c$.
\end{prop}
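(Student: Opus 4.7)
The plan is to verify \cref{def:kar} for the inclusion in question by exploiting the uniform-degree hypothesis built into the bounded product. Let $\mcU := \prod_{i\in I}^{bd}\mcA_\Gamma(Y_i)$ and $\mcB := \prod_{i\in I}^{bd}\mcA_\Gamma(X_i)$. Given $A = (A_i)_{i\in I}\in\mcB$, $U = (U_i)_{i\in I}\in\mcU$, and morphisms $f\colon A\to U$ and $g\colon U\to A$, the bounded-product condition furnishes a single $n\in\bbN$ bounding the filtration degree of both $f$ and $g$ in every factor simultaneously. For each $i$, I then split $A_i\cong A_i^-\oplus A_i^+$ in $\mcA_\Gamma(X_i)$ by taking $A_i^-$ to be the restriction of $A_i$ to the support points $(\gamma,x,t)$ with $d(x,Y_i)\leq n$ and $A_i^+$ the complementary restriction.

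Because $U_i$ is supported in $\Gamma\times Y_i\times[0,1)$ and $f_i,g_i$ are $n$-bounded, any matrix entry of $f_i$ or $g_i$ between $A_i^+$ and $U_i$ would connect a point with $d(x,Y_i)>n$ to a point of $Y_i$, contradicting the bound; hence $f_i$ factors as $A_i\twoheadrightarrow A_i^-\to U_i$ and $g_i$ as $U_i\to A_i^-\hookrightarrow A_i$. The uniformity of $n$ across $i$ ensures that these single-factor factorizations assemble into factorizations of $f$ and $g$ through $A^- := (A_i^-)_{i\in I}$ inside $\mcB$. The object $A^-$ is supported in a uniform $n$-neighborhood of each $Y_i$, so it lies in the slightly larger full subcategory $\mcU^+\subseteq\mcB$ of families whose supports are uniformly close to $Y$, rather than strictly in $\mcU$.

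To descend from $\mcU^+$ to $\mcU$, I invoke the last two items of \cref{rem:kar}: the inclusion $\mcU\hookrightarrow\mcU^+$ is an equivalence of categories, realized factorwise by transporting support along any chosen nearest-point retraction onto $Y_i$. This transport increases morphism bounds by at most $2n$, a change that is uniform in $i$ thanks to the uniform neighborhood size built into $\mcU^+$, so the equivalence respects the $\prod^{bd}$-structure. The item on equivalent subcategories in \cref{rem:kar} then upgrades the $\mcU^+$-Karoubi filtration of $\mcB$ established above to a $\mcU$-Karoubi filtration. The case of $\mcA_c$ in place of $\mcA_\Gamma$ is identical, since both the splitting and the retraction preserve the property of being supported in $\{e\}\times X\times[0,1)$.

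The main technical obstacle is maintaining uniformity across $i\in I$ in both the splitting and the equivalence. The single-factor version of this Karoubi filtration is standard in controlled algebra (see \cref{rem:kar} and~\cite{squeezing}); the substantive content here is that a single bound $n$ governs the decomposition simultaneously across all factors, and that the implementing retraction alters morphism degrees by a factor-independent constant. Both are direct consequences of the bounded-product hypothesis, so the proof is essentially a clean extension of the classical single-index argument to families.
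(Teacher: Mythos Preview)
Your argument is correct and rests on the same mechanism as the paper's proof, namely transporting support from a uniform neighbourhood of $Y_i$ into $Y_i$ itself via a bounded choice map. The paper, however, collapses your two steps into one: rather than first producing $A^-\in\mcU^+$ and then invoking the equivalence $\mcU\simeq\mcU^+$ from \cref{rem:kar}, it defines a map $\rho_i\colon V_i\to Y_i$ only on the subset $V_i\subseteq X_i$ actually hit by the given morphisms $\phi,\psi$, and pushes $A_i|_{V_i}$ forward along $\rho_i$ to obtain an object $B_i$ that already lies in $\mcA_\Gamma(Y_i)$. Your two-step version is exactly the general pattern of \cref{rem:kar}(5) and is arguably more transparent; the paper's one-step version is more economical and sidesteps the need to verify separately that the transport defines an equivalence on all of $\mcU^+$. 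One small imprecision: a literal nearest-point retraction need not exist in an arbitrary metric space, but all you need is a map $\rho_i$ with $d(x,\rho_i(x))$ bounded by (say) $n+1$, which always exists by the definition of $Y_i^n$ and is precisely what the paper uses.
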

\begin{proof}
Let $Z_i:=\Gamma\times X_i\times [0,1)$ and let  $\{A_i\}_{i\in I}\in\prod_{i\in I}^{bd}\mcA_\Gamma(X_i), \{U_i\}_{i\in I}\in\prod_{i\in I}^{bd}\mcA_\Gamma(Y_i)$. Let $\phi=\{\phi_i\colon A_i\rightarrow U_i\}_{i\in I}$ be a morphism from $\{A_i\}_{i\in I}$ to $\{U_i\}_{i\in I}$ and let \linebreak\mbox{$\psi=\{\psi_i\colon U_i\to A_i\}_{i\in I}$} be a morphism from $\{U_i\}_i$ to $\{A_i\}_i$. Under the projection \[p_i\colon Z_i\times Z_i\rightarrow X_i\times X_i\] the subspace $W_i:=\{(z,z')\in Z_i\times Z_i\mid \phi_{z',z}\neq 0\wedge \psi_{z,z'}\neq 0\}$ has image inside $X_i\times Y_i$. Let $V_i:=\{x\in X_i\mid \exists y:(x,y)\in p_i(W_i)\}$. Define $\rho_i\colon V_i\rightarrow Y_i$ by choosing for every $v\in V_i$ a $y$ with $(v,y)\in p_i(W_i)$. Furthermore, define $\{B_i\}_{i\in I}\in\prod^{bd}_{i\in I}\mcA_\Gamma(Y_i)$ by \[(B_i)_{(\gamma,y,t)}:=\bigoplus_{x\in\rho_i^{-1}(y)}(A_i)_{(\gamma,x,t)},\] 
for all $i\in I, \gamma\in\Gamma, y\in Y_i$ and $t\in[0,1)$.

Since $\phi,\psi$ are morphisms in $\prod_{i\in I}^{bd}\mcA_\Gamma(X_i)$ also the isomorphism $\{B_i\}_{i\in I}\cong \{A_i|_{V_i}\}_{i\in I}$ satisfies all control conditions and we get a decomposition $\{A_i\}_i\cong \{B_i\}_i\oplus \{A_i|_{X_i\setminus V_i}\}_i$. The morphisms $\phi$ and $\psi$ factor through $\{A_i|_{V_i}\}_i$ by definition, so they factor through $\{B_i\}_i$ as well. The same proof holds for $\mcA_c$ instead of $\mcA_\Gamma$.
\end{proof}
\begin{defi}
If $\Gamma$ acts on $X$ and $\mcA$, then $\Gamma$ acts on the category $\mcA_\Gamma(X)$ by $(\gamma M)_x:=\gamma (M_{\gamma^{-1}x})$ and the corresponding action on the morphisms. For a subgroup $G\leq\Gamma$ and $Y_i\subseteq X_i$ $G$-invariant let $\prod_{i\in I}^{bd}\mcA^G_\Gamma(X_i,Y_i)$ be the corresponding fixed point category of $\prod_{i\in I}^{bd}\mcA_\Gamma(X_i,Y_i)$. This is equivalent to the quotient of $\prod_{i\in I}^{bd}\mcA^G_\Gamma(X_i)$ by $\prod_{i\in I}^{bd}\mcA^G_\Gamma(Y_i)$.
\end{defi}
\begin{defi}
For a continuous map $p\colon X\rightarrow X'$ let $\mcA_{\Gamma,p}(X)$ be the full subcategory of $\mcA_\Gamma(X)$ with objects having support in $\Gamma\times p^{-1}(K)\times[0,1)$ for some compact subspace $K\subseteq X'$. For some $G\leq\Gamma$ acting on $X$ let $\mcA^G_{\Gamma,p}(X)$ be the fixed point category as above.\\
Furthermore, let $\mcA^G_{\Gamma,p}(X)_0$ be the full subcategory of $\mcA^G_{\Gamma,p}(X)$ with the following condition on the support of the objects:
\begin{itemize}
\item For every object $M$ the limit points of the image of $\supp(M)$ under\linebreak$Z\rightarrow X\times[0,1)$ are disjoint from $X\times\{1\}$.
\end{itemize}
The inclusion $\mcA^G_{\Gamma,p}(X)_0$ into $\mcA^G_{\Gamma,p}(X)$ is a Karoubi filtration.\\
Define $\mcA_{\Gamma,p}^G(X)^\infty$ to be the quotient of $\mcA^G_{\Gamma,p}(X)$ by $\mcA^G_{\Gamma,p}(X)_0$.
\end{defi}
\begin{rem}[{\cite[(5.15)]{bartels}}]
\label{rem:functorial}
The categories $\mcA_c(X) $defined above are functorial in $X$ for continuous metrically coarse maps $f\colon X\rightarrow Y$ if two such maps are metrically homotopic (see \cref{perm}) then they induce homotopic maps of the $K$-theory spectra.

The categories $\mcA_{\Gamma,p}(X)$ are functorial in $X$ for commutative diagrams of the form
\[\xymatrix{X\ar[r]^f\ar[d]^p&Y\ar[d]^{p'}\\X'\ar[r]^{f'}&Y'}\]
where $f$ is continuous metrically coarse and $f'$ is continuous. If two such diagrams are homotopic, i.e. we have a metric homotopy $H$ between $f_1,f_2$ and a homotopy $H'$ between $f_1',f_2'$ such that $p'H=H'p$, then $f_1,f_2$ induce homotopic maps
\[\bbK(\mcA_{\Gamma,p}(X))\to \bbK(\mcA_{\Gamma,p}(X)).\]
The same is true for $\mcA_{\Gamma,p}(X)_0$ and $\mcA_{\Gamma,p}(X)^\infty$.
\end{rem}
\begin{rem}
Let $p\colon\underbar E\Gamma\rightarrow \Gamma\backslash \underbar E\Gamma$ be the projection. The category $\mcA_{p,\Gamma}^\Gamma(\underbar E\Gamma)_0$ is equivalent to the category $\mcA[\Gamma]$ defined in \cite{MR2294225}. There the assembly map
\[H_*(\underbar E\Gamma;\bbK_\mcA)\rightarrow H_{*}(pt;\bbK_\mcA)=\pi_*\bbK(\mcA[\Gamma])\]
is also defined.\\
The controlled categories defined above can be used to describe that assembly map. More precise the boundary map
\[\pi_*\bbK\mcA_{p,\Gamma}^\Gamma(\underbar E\Gamma)^\infty\rightarrow \pi_{*-1}\bbK\mcA_{p,\Gamma}^\Gamma(\underbar E\Gamma)_0\]
is equivalent to the assembly map. Compare \cite[(5.17)]{bartels}.
\end{rem}
\section{The Rips complex}
Recall the following definitions.
\begin{defi}
A metric space $X$ has \emph{bounded geometry} if for each $R > 0$ there exists $N > 0$ such that for all $x\in X$ the ball $B_R(x)$ contains at most $N$ points.
\end{defi}
\begin{defi}
Given a metric space $X$ and a number $s > 0$, the \emph{Rips complex} $P_s(X)$ is the simplicial complex with vertex set $X$ and with a simplex
$\langle x_0,\ldots,x_n\rangle$ whenever $d(x_i,x_j)\leq s$ for all $i, j\in\{0, . . . ,n\}$.
\end{defi}
Note that if $X$ is a metric space with bounded geometry, then the Rips complex $P_s(X)$ is finite dimensional and locally finite.
We will always use the simplicial path metric on $P_s(X)$. For a definition of the simplicial path metric see \cite[Section 2]{k-theory}.\\
The following is similar to \cite[Lemma 4.10]{rigidity} and is proved in a special case in the proof of \cite[Theorem 7.8]{k-theory}
\begin{prop}
\label{compare}
Let $X$ be a metric CW-complex, $Y$ a subcomplex such that $X$ is uniformly contractible with respect to $Y$ and the cells in $Y$ have uniformly bounded diameter. Let $S\subseteq Y$ be a subspace with bounded geometry such that there exists \mbox{$R>0$} with $Y\subseteq S^R$. Then there exist continuous metrically coarse maps \mbox{$f_s\colon Y\rightarrow P_s(S)$}, $g_s\colon P_s(S)\rightarrow X$ for every $s>2R$ such that the following diagram commutes for $s'>s$
\[\xymatrix{Y\ar[r]^{f_s}\ar[rd]_{f_{s'}}&P_s(S)\ar[r]^{g_s}\ar[d]^{i_{ss'}}&X\\&P_{s'}(S)\ar[ru]_{g_{s'}}&}\]
where $i_{ss'}\colon P_s(S)\rightarrow P_{s'}(S)$ is the inclusion and such that $g_s\circ f_s$ is metrically homotopic to the inclusion $i\colon Y\hookrightarrow X$.
\end{prop}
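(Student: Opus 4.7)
The strategy is to construct $f_s$ from a partition of unity on $Y$ subordinate to the covering of $Y$ by $R$-balls around points of $S$, and to build a single map $g \colon P_\infty(S) \to X$, where $P_\infty(S) := \bigcup_{s>0} P_s(S)$, by induction on skeleta using uniform contractibility; $g_s$ is then obtained as the restriction of $g$ to $P_s(S)$, which automatically gives the compatibility $g_{s'} \circ i_{ss'} = g_s$.

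First I would pick a locally finite partition of unity $\{\phi_x\}_{x \in S}$ on $Y$ subordinate to $\{B_R(x) \cap Y\}_{x \in S}$; this cover is locally finite because $S$ has bounded geometry, and it covers $Y$ because $Y \subseteq S^R$. Setting
\[f_s(y) := \sum_{x \in S} \phi_x(y)\, x\]
as a formal convex combination, the active vertices at any $y$ all lie in $B_R(y) \cap S$, hence are pairwise within $2R < s$, so they span a simplex of $P_s(S)$ and $f_s$ is well-defined. Continuity is immediate, uniform expansiveness is a routine check from the bound $2R$ on the spread of active vertices, and $i_{ss'} \circ f_s = f_{s'}$ is automatic since the partition of unity is independent of $s$.

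To define $g$, set $g(x) := x$ for vertices $x \in S$ and maintain the invariant that for every $k$-simplex $\sigma$ of edge-diameter at most $s$ and every vertex $v$ of $\sigma$, the image $g(\sigma)$ is contained in $B_{T_k(s)}(v)$, for a function $T_k(s)$ to be defined inductively. For an $n$-simplex $\sigma = \langle x_0, \dots, x_n\rangle$ of edge-diameter at most $s$, the image of $\partial\sigma$ lies in $B_{T_{n-1}(s) + s}(x_0)$, which is contained in $Y^{T_{n-1}(s) + s}$ because $x_0 \in S \subseteq Y$. Uniform contractibility of $X$ with respect to $Y$ produces a null homotopy of the inclusion $B_{T_{n-1}(s) + s}(x_0) \cap Y^{T_{n-1}(s) + s} \hookrightarrow B_{T_n(s)}(x_0)$ for some $T_n(s)$; this null homotopy extends $g|_{\partial\sigma}$ continuously to $g|_\sigma \colon \sigma \to B_{T_n(s)}(x_0)$, and enlarging $T_n(s)$ by $s$ restores the invariant at every vertex of $\sigma$. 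Since each $P_s(S)$ is finite-dimensional (as $S$ has bounded geometry), the restriction $g_s := g|_{P_s(S)}$ is continuous and metrically coarse.

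For the required metric homotopy $g_s \circ f_s \simeq i$, I would construct $H \colon Y \times [0,1] \to X$ with $H(y,0) = y$ and $H(y,1) = g_s(f_s(y))$ by the same type of cell-by-cell inductive extension: the two endpoints agree up to a bounded displacement because $g_s(f_s(y))$ lies in a bounded neighborhood of every vertex supporting $\phi_x(y) \neq 0$, and one then extends over the cells of $Y \times [0,1]$ using uniform contractibility of $X$ with respect to $Y$, exploiting the hypothesis that the cells of $Y$ have uniformly bounded diameter. The main technical obstacle throughout is the bookkeeping of the dimension-indexed radii $T_n(s)$ and the choice of null homotopies that glue consistently on faces, so that $g$ and $H$ are continuous with control functions uniform in each dimension; this is handled by fixing one null homotopy per dimension and radius and applying it systematically to every simplex (respectively cell).
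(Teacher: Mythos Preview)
Your proposal is correct and follows essentially the same route as the paper: construct $f_s$ via a partition of unity subordinate to the $R$-ball cover of $Y$ by points of $S$, build $g$ on the Rips complex by skeleton-wise extension using relative uniform contractibility (the paper organizes this as a double induction on $s\in\bbN$ and then skeleta of $P_s(S)$, whereas you induct directly on the skeleta of $P_\infty(S)$, but the content is identical and your version makes $g_{s'}\circ i_{ss'}=g_s$ automatic), and obtain the metric homotopy $g_s\circ f_s\simeq i$ by the same cell-by-cell extension over $Y\times[0,1]$ using the uniform bound on cell diameters. One small wording issue: you cannot literally ``fix one null homotopy per dimension and radius'' since the contracting homotopy depends on the basepoint $x_0$; what is fixed uniformly is the radius bound $T_n(s)$, while the null homotopy is chosen per simplex---but since $g$ is already defined on $\partial\sigma$ from the previous step, these choices glue automatically.
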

\begin{proof}
Let $R>0$ be such that $Y\subseteq S^R$, then $\{U_{x'}:=Y\cap(B_R(x')-(S-\{x'\}))\}_{x'\in S}$ is an open covering of $Y$. Choose a partition of unity $\{\phi_{x'}\}$ subordinate to the cover $\{U_{x'}\}$ and define $f_s\colon Y\rightarrow P_s(S)$ by
\[f_s(y):=\sum_{x'\in S}\phi_{x'}(y)x'\]
for every $y\in Y$, $s>2R$.\\
Define maps $g_s\colon P_{s}(S)\rightarrow X$ by induction over $s\in\bbN$ and the simplices in $P_s(S)$. If $s=0$, $P_s(S)=S$ and $g_0$ is just the embedding $S\hookrightarrow X$.\\
Now assume $g_{s-1}$ has been defined. Let $P_s^{(k)}(S)$ be the $k$-skeleton of $P_s(S)$. Viewing $P_{s-1}(S)$ as a subcomplex of $P_s(S)$ we extend $g_{s-1}$ inductively over the subspaces $P_s^{(k)}(S)\cup P_{s-1}(S)$. Assuming $g_s$ has been defined on $P_s^{(k-1)}(S)\cup P_{s-1}(S)$ we extend over a $k$-simplex $\sigma\notin P_{s-1}(S)$ as follows:\\
Let $D=\diam(g_s(\partial \sigma))$. Since $g_s$ is metrically coarse $D$ is independent of $\sigma$ and since $S\subseteq Y$ and $g_s(S)=S$ we have $g_s(\partial\sigma)\cap Y\neq \emptyset$. Choose $y\in g_s(\partial \sigma)$. By uniform contractibility of $X$ relative to $Y$ there exists $D'$, depending only on $D$, and a null homotopy of $g_s|_{\partial\sigma}$ whose image lies inside $B_{D'}(y)$. We now extend $g_s$ over $\sigma$ using this null homotopy.\\
To construct a homotopy from $g_s\circ f_s$ to $i\colon Y\hookrightarrow X$ one does an induction over the simplices of $Y\times I$ (best using a cell structure such that all cells are of the form $\sigma\times\{0\},\sigma\times\{1\}$ or $\sigma\times I$) using the relative uniform contractibility of $X$ with respect to $Y$. To do this boundedly one uses the uniform bound on the diameter of the cells of $Y$.
\end{proof}
\begin{rem}
\label{compare1}
If $X=\coprod_{i\in I} X_i$ is a metric CW complex with FDC, is uniformly contractible with respect to a subcomplex $Y=\coprod_{i\in I} Y_i$ and there exists a subspace with bounded geometry $S=\coprod_{i\in I} S_i$ such that $Y\subseteq S^R$ for some $R$, then for any family $\mcA_i$ of additive categories the above theorem yields maps
\[(f_s)_*\colon\prod_{i\in I}^{bd}(\mcA_i)_c(Y_i)\rightarrow \prod_{i\in I}^{bd}(\mcA_i)_c(P_s(S_i)),~(g_s)_*\colon\prod_{i\in I}^{bd}(\mcA_i)_c(P_s(S_i))\rightarrow \prod_{i\in I}^{bd}(\mcA_i)_c(X_i)\]
and $(g_s)^*\circ(f_s)^*$ induces a map on $K$-theory that is homotopic to the inclusion $\prod_{i\in I}^{bd}(\mcA_i)_c(Y_i)\rightarrow\prod_{i\in I}^{bd}(\mcA_i)_c(X_i)$ by \cref{rem:functorial}.
\end{rem}
\begin{thm}[{\cite[Theorem 6.4]{k-theory}}]
If $X=\coprod_{i\in I}X_i$ is a bounded geometry metric space with FDC, $\mcA$ an additive category, then for each $n\in\bbZ$ we have
\[\colim_s\pi_n\bbK\prod^{bd}_{i\in I}\mcA_c(P_sX_i)=\colim_s\pi_n\bbK(\mcA_c(P_sX))=0.\]
\end{thm}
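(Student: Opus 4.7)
My plan is to prove both the equality and the vanishing by induction on the FDC complexity. The first equality follows by direct inspection: because the $X_i$ lie at pairwise infinite distance in $X=\coprod X_i$, every object of $\mcA_c(P_sX)$ automatically splits as a family $(M_i)_i$ indexed by $i$ (by local finiteness), and every morphism, being $R$-bounded for some single $R$, splits as a morphism of families of common propagation bound, so $\mcA_c(P_sX)\cong\prod^{bd}_i\mcA_c(P_sX_i)$. Hence the main content is the vanishing statement.

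To prove it, let $\mcC$ be the class of bounded-geometry metric families $Y=\coprod_i Y_i$ such that
\[
\colim_s \pi_n \bbK{\prod}^{bd}_{i} \mcA_c(P_s Y_i)=0\qquad\text{for all }n\in\bbZ.
\]
Since $\mcD$ is the minimal class containing semi-bounded families and closed under decomposition, it suffices to verify that $\mcC$ enjoys both properties.

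For the base case, if $Y$ is semi-bounded at some scale $R$ and has bounded geometry, then its components have diameter $<R$ and uniformly bounded cardinality. For $s\geq R$ the Rips complex $P_s(Y_i)$ is a disjoint union of uniformly bounded finite simplices, and the category $\prod^{bd}_i \mcA_c(P_s Y_i)$ admits an Eilenberg swindle (shift supports towards $0$ in the $[0,1)$-factor and take a countable direct sum componentwise, which is legal because diameters and propagations are uniformly bounded), so its $K$-theory is already trivial before taking the colimit.

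For the inductive step, suppose $Y$ decomposes at scale $r$ as $Y=U\cup V$ with $U=\coprod_\alpha U_\alpha$ and $V=\coprod_\beta V_\beta$ pairwise $r$-disjoint, and $U,V\in\mcC$. Fix $s<r$. Because any simplex of $P_s(Y)$ has diameter $\leq s<r$, it meets at most one $U_\alpha$ and at most one $V_\beta$, so $P_s U=\coprod_\alpha P_s U_\alpha$ and likewise for $V$ and $U\cap V$. Consequently
\[
\mcA_c(P_s U)\simeq {\prod}^{bd}_\alpha \mcA_c(P_s U_\alpha),\qquad \mcA_c(P_s V)\simeq {\prod}^{bd}_\beta \mcA_c(P_s V_\beta),
\]
and similarly for $U\cap V$. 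Applying \cref{prop:5.8} twice yields Karoubi fibrations
\[
\bbK\mcA_c(P_s U)\to \bbK\mcA_c(P_s Y)\to \bbK\mcA_c(P_s Y,P_s U),
\]
\[
\bbK\mcA_c(P_s(U\cap V))\to \bbK\mcA_c(P_s V)\to \bbK\mcA_c(P_s V,P_s(U\cap V)).
\]
A standard excision argument (morphisms have bounded propagation, hence, when that bound is less than $r-s$, cannot bridge the distance-$r$ gap separating $Y\setminus U$ from the complement of the $r$-neighborhood of $U\cap V$) identifies the two quotient categories. Splicing the two fibrations produces a Mayer--Vietoris long exact sequence relating $K_*(\mcA_c(P_sY))$, $K_*(\mcA_c(P_sU))$, $K_*(\mcA_c(P_sV))$, and $K_*(\mcA_c(P_s(U\cap V)))$. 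Passing to $\colim_s$, the hypothesis $U,V\in\mcC$ together with \cref{3.1.3} applied to the coarse embedding $U\cap V\hookrightarrow U$ (giving $U\cap V\in\mcC$) annihilates all three outer groups, forcing the middle one to vanish, which shows $Y\in\mcC$.

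The main obstacle I expect is the excision identification, and more generally the need to coordinate the propagation scale of morphisms with the decomposition scale $r$ uniformly inside $\prod^{bd}$. The right bookkeeping is to choose, for each $s$, a decomposition scale $r=r(s)>2s$, and to note that the colimit $\colim_s$ can be computed along any cofinal system, so one is free to replace the indexing $s\mapsto s+1$ by $s\mapsto r(s)$; with this reorganization both the bounded-propagation excision and the Karoubi filtration given by \cref{prop:5.8} apply on the nose, and the rest is a formal diagram chase through the Mayer--Vietoris long exact sequence.
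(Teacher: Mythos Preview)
The paper does not prove this theorem; it is quoted verbatim from \cite[Theorem~6.4]{k-theory} without argument, so there is no in-paper proof to compare against, only the cited one. Your sketch correctly identifies the overall architecture of that argument --- transfinite induction along the FDC hierarchy, a controlled Mayer--Vietoris, and an Eilenberg swindle in the semi-bounded base case --- and your identification $\mcA_c(P_sX)\cong\prod^{bd}_i\mcA_c(P_sX_i)$ is fine.

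There is, however, a genuine gap at the excision step. The FDC decomposition $Y=U\cup V$ carries \emph{no} separation between $U$ and $V$: the $r$-disjointness is only among the pieces $U_\alpha$ inside $U$ (and among the $V_\beta$ inside $V$), not between $U$ and $V$, nor between $Y\setminus U$ and anything built from $U\cap V$. Your parenthetical ``distance-$r$ gap separating $Y\setminus U$ from the complement of the $r$-neighbourhood of $U\cap V$'' simply does not exist. Worse, $P_sY$ is in general strictly larger than $P_sU\cup P_sV$: an edge with one endpoint in $U\setminus V$ and the other in $V\setminus U$ (these can be arbitrarily close) lies in neither subcomplex. Hence your two Karoubi sequences do not splice into a Mayer--Vietoris for $P_sY$, and the identification $\mcA_c(P_sY,P_sU)\simeq\mcA_c(P_sV,P_s(U\cap V))$ fails as stated. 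The proof in \cite{k-theory} gets around this by working with \emph{thickenings} rather than the naive subcomplexes and by using that a fixed $K$-theory class has a definite propagation bound $p$, choosing $r\gg p,s$ only afterwards; the excision is then set up for metric neighbourhoods, and one tracks how the (varying!) pieces $U_r,V_r$ interact with the colimit in $s$. Your final paragraph gestures at this bookkeeping, but a mere reindexing of the colimit does not repair the missing excision. A smaller point: your appeal to \cref{3.1.3} to get $U\cap V\in\mcC$ is circular --- that lemma concerns $\mcD$, not your class $\mcC$; one needs the transfinite stratification $\mcD_\alpha$ of $\mcD$ and the fact that each $\mcD_\alpha$ is subspace-closed.
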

The same proof as in \cite{k-theory} also yields a more general result.
\begin{thm}
\label{vanishing}
If $X=\coprod_{i\in I}X_i$ is a bounded geometry metric space with FDC, $\{A_i\}_{i\in I}$ a family of additive categories, then for each $n\in\bbZ$ we have
\[\colim_s\pi_n\bbK\prod^{bd}_{i\in I}(\mcA_i)_c(P_sX_i)=0.\]
\end{thm}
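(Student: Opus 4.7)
The plan is to mimic the proof of Theorem 6.4 in \cite{k-theory} verbatim, replacing every occurrence of $\mcA_c(P_sX_i)$ with the corresponding component of the bounded product $\prod_i^{bd}(\mcA_i)_c(P_sX_i)$. Concretely, I would define the class $\mcE$ of metric families $\{X_i\}_{i\in I}$ of bounded geometry such that $\colim_s\pi_n\bbK\prod_i^{bd}(\mcA_i)_c(P_sX_i)=0$ for every family $\{\mcA_i\}_{i\in I}$ of additive categories and every $n\in\bbZ$, and then verify that $\mcE$ contains all semi-bounded families and is stable under decomposition. Since $\mcD$ is the minimal such class, this forces $\mcD\subseteq\mcE$.

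For the base case, suppose $X=\coprod_i X_i$ is semi-bounded with bound $R$. For $s\geq R$, each finite-distance component of $X_i$ sits inside a single simplex of $P_s(X_i)$, so $P_s(X_i)$ is a disjoint union of simplices. One then constructs an Eilenberg swindle on $\prod_i^{bd}(\mcA_i)_c(P_sX_i)$ by telescoping each simplex onto a single vertex; the swindle is bounded uniformly in $i$ because the diameters of the relevant simplices are uniformly bounded by $s$, and hence the resulting flasque structure kills all K-theory groups. This is exactly the uniform-bound argument from \cite{k-theory}, and it is insensitive to the choice of category in each component.

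For the inductive step, assume $\{X_i\}\in\mcE$ decomposes over $\mcE$: for every $r>0$ we have $X_i=U_r^{(i)}\cup V_r^{(i)}$ with $r$-disjoint sub-decompositions whose totalizations lie in $\mcE$. Given $s>0$ and choosing $r>2s$, the $r$-disjointness implies that the pieces $P_s(U_{r,\alpha}^{(i)})$ are pairwise disjoint and similarly for the $V$'s, so \cref{prop:5.8} produces Karoubi filtrations whose quotients fit into a Mayer--Vietoris style homotopy cartesian square relating $\prod_i^{bd}(\mcA_i)_c(P_sX_i)$ to the corresponding categories over $U_r$, $V_r$, and their intersection. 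Reindexing by the decomposition indices and pulling back the category family $\{\mcA_i\}$ componentwise, the inductive hypothesis gives that each of the other three corners has vanishing K-theory in the colimit over $s$, hence so does the fourth by the long exact sequence.

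The main obstacle will be verifying that the Mayer--Vietoris argument is compatible with the $\prod^{bd}$ construction as the decomposition indices $\alpha$ are absorbed into the product: one must check that reindexing a bounded product over $i\in I$ by the finer set $\{(i,\alpha)\}$ of component pieces produces exactly the bounded product suitable for applying the inductive hypothesis. This is a direct unwinding of the definitions, relying on the fact that a morphism in $\prod_i^{bd}(\mcA_i)_c(P_sX_i)$ has a single propagation bound valid across all $i$, which matches the single propagation bound on the reindexed product over $(i,\alpha)$ since $r$-disjointness forces morphisms of propagation $<r/2$ to respect the decomposition piecewise. Once this bookkeeping is checked, every step from \cite{k-theory} transfers without modification, and the transfinite induction on decomposition complexity closes.
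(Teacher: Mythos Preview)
Your proposal is correct and matches the paper's own treatment: the paper simply asserts that ``the same proof as in \cite{k-theory} also yields a more general result'' and gives no further details, so your plan to rerun the RTY argument with the single category $\mcA$ replaced by a family $\{\mcA_i\}$ is exactly what is intended. The bookkeeping point you flag---that reindexing the bounded product over the finer decomposition index set is harmless because a single propagation bound governs all components---is the only place where one must pause, and your explanation of why it goes through is accurate.
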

\section{The Descent Principle}
\label{des}
Let $Z$ be a simplicial $\Gamma$-CW complex and $\mcA$ a filtered, additive category with \mbox{$\Gamma$-action.} Let $J_k$ be the set of $k$-simplices in the barycentric subdivision of $Z$. Since the vertices of every simplex in the barycentric subdivision are naturally ordered by the inclusion of the corresponding simplices in $Z$, we get maps \[s_i\colon J_k\rightarrow J_{k-1},\sigma\mapsto \partial_i\sigma\quad\textnormal{for }0\leq i \leq k.\] Define for each $n\in\bbN$ \[A_k^n:=\Map_\Gamma(\Delta^k,(\bbK\prod_{J_k}^{bd}\mcA)_n)\cong\Map(\Delta^k,(\bbK\prod_{J_k}^{bd}\mcA)_n^\Gamma)\] and \[B_k^n:=\prod_{i=0}^k\Map_\Gamma(\Delta^{k-1},(\bbK\prod_{J_k}^{bd}\mcA)_n),\] where $(\bbK\prod_{J_k}^{bd}\mcA)_n$ is the n-th space of the spectrum $\bbK\prod_{J_k}^{bd}\mcA$. The maps $s_i$ induce maps $f_{k}^n:=(s_i^*)_i\colon A^n_{k-1}\rightarrow B_k^n$ and the inclusions $d_i\colon\Delta^{k-1}\rightarrow \Delta^k$ induce maps $g_k^n:=(d_i^*)_i\colon A_k^n\rightarrow B_k^n$.
\begin{defi}
\label{mappingspace}
The \emph{bounded mapping space} $\Map^{bd}_\Gamma(Z,\bbK\mcA)$ is defined as the spectrum whose $n$-th space is the subspace of $\prod_{k\in \bbN}A_k^n$ consisting of all $(h_k)_k\in\prod_{k\in\bbN}A_k^n$ with $f_{k}^n(h_{k-1})=g_k^n(h_k)$ for all $k\geq1$. The structure maps are induced by the structure maps of the spectra $\bbK(\prod_{J_k}^{bd}\mcA)$.
\end{defi}
\begin{rem}
\label{maps}
The inclusion $\prod_{J_k}^{bd}\mcA\hookrightarrow\prod_{J_k}\mcA$ together with the map $\bbK\big(\prod_{J_k}\mcA\big)_n\to \prod_{J_k}(\bbK\mcA)_n$ induces a map 
\[F_k\colon\Map_\Gamma\Bigg(\Delta^k,\bigg(\bbK\prod^{bd}_{J_k}\mcA\bigg)_n\Bigg)\rightarrow\Map_\Gamma\Bigg(\Delta^k,\prod_{J_k}(\bbK\mcA)_n\Bigg)\cong\Map_\Gamma \Bigg(\coprod_{J_k}\Delta^k,(\bbK\mcA)_n\Bigg).\]
For $\sigma\in J_k$ let $F_k(h_k)(\sigma)$ denote the restriction of $F_k(h_k)$ to the $\sigma$-component. Since $f_{k}^n(h_{k-1})=g_k^n(h_k)$ for every $(h_k)_k\in\left(\Map^{bd}_\Gamma(Z,\bbK\mcA)\right)_n$, we get
\[F_k(h_k)(\sigma)|_{\partial_i}=d_i^*F_k(h_k)(\sigma)=s_i^*F_{k-1}(h_{k-1})(\sigma)=F_{k-1}(h_{k-1})(\partial_i\sigma).\]
For $\sigma_1,\sigma_2\in J_k$ with $\partial_i\sigma_1=\partial_j\sigma_2$ this implies
\[F_k(h_k)(\sigma_1)|_{\partial_i}=F_{k-1}(h_{k-1})(\partial_i\sigma_1)=F_{k-1}(h_{k-1})(\partial_j\sigma_2)=F_k(h_k)(\sigma_2)|_{\partial_j}.\]
This shows that the maps 
\[F_k(h_k)\in \Map_\Gamma \bigg(\coprod_{J_k}\Delta^k,(\bbK\mcA)_n\bigg)\] fit together to a map $h\in\left(\Map_\Gamma(Z,\bbK\mcA)\right)_n$. Therefore, the inclusions \mbox{$\prod_{J_k}^{bd}\mcA\hookrightarrow\prod_{J_k}\mcA$} induce a map
\[F\colon\Map^{bd}_\Gamma(Z,\bbK\mcA)\rightarrow\Map_\Gamma(Z,\bbK\mcA).\]
Furthermore, the diagonal map $\Delta\colon\mcA\rightarrow\prod^{bd}_{J_k}\mcA$ induces a map 
\[G\colon\bbK(\mcA^\Gamma)\rightarrow\Map_\Gamma^{bd}(Z,\bbK\mcA)\]
by sending $x\in(\bbK\mcA^\Gamma)_n$ to $(h_k)_k\in\Map_\Gamma^{bd}(Z,\bbK\mcA)_n$ with $h_k\equiv \bbK(\Delta)(x)$ for all $k$. The composition $F\circ G\colon \bbK(\mcA^\Gamma)\cong\Map_\Gamma(*,\bbK\mcA)\to \Map_\Gamma(Z,\bbK\mcA)$ is induced by the map $Z\to*$.
\end{rem}
Next we will show that $\Map^{bd}_\Gamma(Z,\bbK\mcA)$ can be characterized as a homotopy limit. We will need this later on to see that it commutes with other homotopy limits.
\begin{prop}
\label{holim}
Let $Z$ be a simplicial $\Gamma$-CW complex and $\mcA$ a filtered, additive category with $\Gamma$-action. Fix $n\in\bbN$ and  let $(A_k^n,B_k^n,f_k^n,g_k^n)$ be as above. Then $\left(\Map_\Gamma^{bd}(Z,\bbK\mcA)\right)_n$ is a model for the limit as well as the homotopy limit of the diagram $(A_k^n,B_k^n,f_k^n,g_k^n)$.
\end{prop}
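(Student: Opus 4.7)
\emph{Plan.} First, the identification with the strict limit is a direct unraveling. The diagram is indexed by a small category $I$ with objects $\{a_k\}_{k\ge 0}$ and $\{b_k\}_{k\ge 1}$, whose only non-identity morphisms are $f_k^n\colon a_{k-1}\to b_k$ and $g_k^n\colon a_k\to b_k$. Since every non-identity morphism lands in a $b_k$, the limit of the diagram is the equalizer of the two natural maps $\prod_{k\ge 0} A_k^n\rightrightarrows \prod_{k\ge 1} B_k^n$ given by $(h_k)_k\mapsto (g_k^n(h_k))_k$ and $(h_k)_k\mapsto (f_k^n(h_{k-1}))_k$. Unwinding this equalizer reproduces verbatim the defining subspace of $(\Map_\Gamma^{bd}(Z,\bbK\mcA))_n$ from \cref{mappingspace}.

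For the homotopy limit I would rewrite the limit as the limit of a tower of fibrations. Set
\[L_0:=A_0^n,\qquad L_k:=L_{k-1}\times_{B_k^n}A_k^n\quad(k\ge 1),\]
where the pullback uses $L_{k-1}\to A_{k-1}^n\xrightarrow{f_k^n}B_k^n$ on one side and $g_k^n$ on the other. By construction $\lim_k L_k=(\Map_\Gamma^{bd}(Z,\bbK\mcA))_n$. Setting $Y_k:=(\bbK\prod_{J_k}^{bd}\mcA)_n$, one identifies $A_k^n\cong \Map(\Delta^k, Y_k^\Gamma)$ and $B_k^n\cong \Map(\coprod_{i=0}^k \Delta^{k-1}, Y_k^\Gamma)$, and $g_k^n$ becomes the restriction along the cofibration $\coprod_{i=0}^k\Delta^{k-1}\hookrightarrow \Delta^k$. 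Since $Y_k$ is a space of a non-connective $K$-theory $\Omega$-spectrum, $Y_k^\Gamma$ is fibrant, and hence $g_k^n$ is a Serre fibration.

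Consequently each square $L_k = L_{k-1}\times_{B_k^n}A_k^n$ is already a homotopy pullback, and by induction $L_k$ models the homotopy limit $\mathrm{holim}_{I_{\le k}}D$ of the diagram truncated to indices $\le k$. The transition maps $L_k\to L_{k-1}$ are pullbacks of the fibrations $g_k^n$ and hence themselves fibrations, so the strict limit $\lim_k L_k$ already computes $\mathrm{holim}_k L_k$. A standard cofinality/Fubini argument, using that $I$ is the filtered union of its truncations $I_{\le k}$, then identifies $\mathrm{holim}_k \mathrm{holim}_{I_{\le k}}D$ with $\mathrm{holim}_I D$, finishing the proof. The two points requiring care are verifying the Serre fibrancy of $g_k^n$ and the compatibility of homotopy limits with the truncation tower; everything else is formal unraveling of \cref{mappingspace}.
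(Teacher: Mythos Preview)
Your approach mirrors the paper's exactly: build the tower $L_k=L_{k-1}\times_{B_k^n}A_k^n$ (the paper writes $M_m$ for this), argue that $g_k^n$ is a fibration so that each pullback is already a homotopy pullback and the transition maps $L_k\to L_{k-1}$ are fibrations, and conclude that the inverse limit of the tower computes the homotopy limit. The paper simply asserts ``since the maps $g_k^n$ are fibrations'' and cites Bousfield--Kan for the two standard facts about pullbacks and towers; you go one step further and try to justify the fibrancy of $g_k^n$.

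That extra justification, however, does not go through as written. The map $\coprod_{i=0}^k\Delta^{k-1}\to\Delta^k$ given by the face inclusions $d_i$ is \emph{not} a cofibration for $k\ge 2$: it is not even injective, since distinct faces of $\Delta^k$ meet along their common boundary simplices (for $k=2$ the three edges meet pairwise at the vertices). So the standard ``restriction along a cofibration is a fibration'' argument does not apply, and indeed $g_k^n$ fails to be a Serre fibration for $k\ge 2$: a path in $B_k^n=\prod_i\Map(\Delta^{k-1},Y_k^\Gamma)$ has independent components which need not agree on the overlaps inside $\partial\Delta^k$, and such incompatible data admit no lift to $A_k^n=\Map(\Delta^k,Y_k^\Gamma)$. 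Thus this step---which the paper states without proof---needs more care than either argument supplies; your overall strategy is the paper's strategy, but the sentence identifying $\coprod_i\Delta^{k-1}\hookrightarrow\Delta^k$ as a cofibration is the point that would fail.
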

We will use that pullbacks where one of the two maps is a fibration are homotopy pullbacks and that the limit of a tower of fibrations is a homotopy limit of that tower. These facts are well known and the analogous statements in the category of simplicial sets instead of topological spaces can be found in \cite[Chapter XI, Examples 4.1(iv)\&(v)]{bk}.
\begin{proof}
Let $M_m\subseteq \prod_{k\leq m}A_k^n$ denote the subspace with $f_k^n(h_{k-1})=g_k^n(h_k)$. $M_m$ is a limit of the diagram $(A_k^n,B_k^n,f_k^n,g_k^n)_{k\leq m}$. The limit arises from taking finitely many pullbacks. Since the maps $g_k^n$ are fibrations, the space $M_m$ is a also a homotopy limit of this diagram and the induced maps $M_m\to M_{m-1}$ are fibrations as well. $\left(\Map_\Gamma^{bd}(Z,\bbK\mcA)\right)_n$ is a limit of the tower
\[\ldots\rightarrow M_m\rightarrow M_{m-1}\rightarrow\ldots \rightarrow M_1\rightarrow M_0=A_0,\]
and since all these arrows are fibrations, it is also a homotopy limit of the tower. Therefore, $\left(\Map_\Gamma^{bd}(Z,\bbK\mcA)\right)_n$ is a model for the limit and the homotopy limit of $(A_k^n,B_k^n,f_k^n,g_k^n)$.
\end{proof}
\begin{prop}
\label{bounded}
Let $Y$ be a finite dimensional, simplicial $\Gamma$-CW complex with finite stabilizers and let $X$ be a $\Gamma$-CW complex such that for every $\Gamma$-set $J$ with finite stabilizers and every $n\in\bbN$
\[\colim_{K\subseteq X}\pi_n\bbK\left(\prod^{bd}_{J}\mcA_\Gamma(\Gamma K))\right)^\Gamma=0,\]
where the colimit is taken over all finite subcomplexes $K\subseteq X$. Then also
\[\colim_{K\subseteq X}\pi_n(\Map_\Gamma^{bd}(Y,\bbK\mcA_\Gamma(\Gamma K)))=0,~~\forall n\in\bbN.\]
\end{prop}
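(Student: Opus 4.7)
The plan is to exploit the homotopy-limit description of the bounded mapping space from \cref{holim} and reduce the statement, stage by stage, to the hypothesis. Fix a finite subcomplex $K \subseteq X$. By \cref{holim}, for each $n$, the $n$-th space of $\Map_\Gamma^{bd}(Y, \bbK\mcA_\Gamma(\Gamma K))$ is the homotopy limit of the diagram $(A_k^n, B_k^n, f_k^n, g_k^n)$. Because $\dim Y = d < \infty$, the $\Gamma$-set $J_k$ of $k$-simplices of the barycentric subdivision is empty for $k > d$, so $A_k^n$ and $B_k^n$ are trivial in that range and the homotopy limit becomes a finite iterated homotopy pullback over $k \in \{0, 1, \ldots, d\}$.

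Next I would identify each stage with an object controlled by the hypothesis. Since $Y$ has finite stabilizers, so does each $J_k$: the stabilizer of any simplex in the barycentric subdivision is a subgroup of the stabilizer of a simplex of $Y$, and hence finite. Because $\Delta^k$ is contractible and carries the trivial $\Gamma$-action, we have
\[A_k^n \simeq \left(\bbK\prod_{J_k}^{bd}\mcA_\Gamma(\Gamma K)\right)_n^\Gamma,\]
and an analogous identification holds for $B_k^n$. Applying the hypothesis of the proposition to the $\Gamma$-set $J = J_k$ yields
\[\colim_{K \subseteq X} \pi_m \bbK\left(\prod_{J_k}^{bd}\mcA_\Gamma(\Gamma K)\right)^\Gamma = 0\]
for every $k \leq d$ and every $m \in \bbZ$. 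In particular, the homotopy groups of each $A_k^n$ and $B_k^n$ vanish after passing to the colimit over $K$.

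The final step is to propagate this vanishing through the finite tower of homotopy pullbacks defining the partial homotopy limits $M_j$ of \cref{holim}. Each pullback square gives a Mayer--Vietoris long exact sequence on homotopy groups, and filtered colimits of abelian groups are exact and commute with finite products. Hence an induction on $j \leq d$ shows that
\[\colim_{K \subseteq X} \pi_n \Map_\Gamma^{bd}(Y, \bbK\mcA_\Gamma(\Gamma K)) = 0\]
for every $n$. The key obstacle is justifying the interchange of the filtered colimit over finite subcomplexes $K \subseteq X$ with the finite homotopy limit; finite dimensionality of $Y$ is essential precisely because it keeps the tower finite, eliminating any $\lim^1$ contribution and letting the inductive argument close after finitely many steps.
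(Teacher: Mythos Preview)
Your proposal is correct but proceeds quite differently from the paper. The paper argues by explicit obstruction theory: it takes a representative $(h_k)_k$ of a class in $\pi_n(\Map_\Gamma^{bd}(Y,\bbK\mcA_\Gamma(\Gamma K)))$ and constructs a null homotopy $(H_k)_k$ by induction on $k$. At stage $k$ the already-built homotopies glue to a map $\tilde H_k\colon S^n\wedge\partial\Delta^{k+1}\to\bbK(\prod_{J_k}^{bd}\mcA_\Gamma(\Gamma K'))^\Gamma$, and since $S^n\wedge\partial\Delta^{k+1}\cong S^{n+k}$ this is a class in $\pi_{n+k}$; the hypothesis, applied to $J=J_k$, provides a larger finite subcomplex $K''\supseteq K'$ over which this obstruction dies, allowing the extension over $\Delta^{k+1}$. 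Finite dimensionality of $Y$ means the induction terminates after $\dim Y$ steps.

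Your route is more structural: you invoke \cref{holim} to write the bounded mapping space as a finite tower of homotopy pullbacks $M_j$, observe that $A_k^n\simeq(\bbK\prod_{J_k}^{bd}\mcA_\Gamma(\Gamma K))_n^\Gamma$ by contractibility of $\Delta^k$, and then push the vanishing up the tower using the Mayer--Vietoris long exact sequence for each pullback together with exactness of the filtered colimit over $K$. Both arguments ultimately consume the hypothesis in the same range of degrees (from $n$ up to roughly $n+\dim Y$), and both rely on finite dimensionality for the same reason---to keep the induction finite and avoid $\lim^1$ issues. Your version is cleaner and more conceptual; the paper's version is more explicit about exactly which enlargement $K\subseteq K'\subseteq K''\subseteq\cdots$ kills which obstruction, which makes the role of the colimit visible at each step rather than only at the end.
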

\begin{proof}
Let $x_0\in S^n$ be the base point and let $K\subseteq X$ be a finite subcomplex. As above let $J_k$ be the set of $k$-simplices in the barycentric subdivision of $Y$ and let $s_i\colon J_k\rightarrow J_{k-1}$ be defined by $\sigma\mapsto \partial_i\sigma$.\\
An element in $\pi_n(\Map_\Gamma^{bd}(Y,\bbK\mcA_\Gamma(\Gamma K)))$ is represented by a system of maps
\[h_k\in\Map_*(S^n,\Map(\Delta^k,\bbK(\prod_{J_k}^{bd}\mcA_\Gamma(\Gamma K))^\Gamma))\cong\Map_*(S^n\wedge \Delta^k_+,\bbK(\prod_{J_k}^{bd}\mcA_\Gamma(\Gamma K))^\Gamma)\]
such that
\begin{itemize}
\item $h_k|_{S^n\wedge(\partial_i\Delta^k)_+}=(s_i)^*\circ h_{k-1}$.
\end{itemize}
We will produce a null homotopy $\{H_k\}$ by induction on $k$. Since $h_0$ represents an element in $\pi_n\bbK(\prod_{J_0}^{bd}\mcA_\Gamma(\Gamma K))^\Gamma$, there exists a finite subcomplex $K'\supseteq K$ such that $h_0$ is null homotopic in $\bbK(\prod_{J_0}^{bd}\mcA_\Gamma(\Gamma K'))^\Gamma$ by assumption. Every such null homotopy gives a map \[H_0\in\Map_*(S^n\wedge \Delta^1,\bbK(\prod_{J_0}^{bd}\mcA_\Gamma(\Gamma K'))^\Gamma)\] with 
\begin{itemize}
\item $H_0|_{S^n\wedge(\partial_1\Delta^1\cup\{1\})}=h_0$.
\end{itemize}
where $\{1\}\in\Delta^1$ is the base point.
Now assume we already have constructed maps \[H_j\in\Map_*(S^n\wedge\Delta^{j+1},\bbK(\prod_{J_j}^{bd}\mcA_\Gamma(\Gamma K'))^\Gamma)\] ($\{j+1\}\in\Delta^{j+1}$ being the base point) for $j< k$ such that
\begin{itemize}
\item $H_j|_{S^n\wedge \partial_i\Delta^{j+1}}=(s_i)^*\circ H_{j-1},~0\leq i\leq j$ and
\item $H_j|_{S^n\wedge(\partial_{j+1}\Delta^{j+1}\cup\{j+1\})}=h_j$.
\end{itemize}
These glue together to a map \[\tilde H_k\in\Map_*(S^n\wedge\partial\Delta^{k+1},\bbK(\prod_{J_k}^{bd}\mcA_\Gamma(\Gamma K'))^\Gamma)\] such that
\begin{itemize}
\item $\tilde H_k|_{S^n\wedge \partial_i\Delta^{k+1}}=(s_i)^*\circ H_{k-1},~0\leq i\leq k$ and
\item $\tilde H_k|_{S^n\wedge(\partial_{k+1}\Delta^{k+1}\cup\{k+1\})}=h_k$.
\end{itemize}
Since \[S^n\wedge \partial\Delta^{k+1}\cong S^{n+k}\] the element $\tilde H_k$ gives an element in $\Map_*(S^{n+k},\bbK(\prod_{J_k}^{bd}\mcA_\Gamma(\Gamma K'))^\Gamma)$. By assumption there exists a finite subcomplex $K''\supseteq K'$ such that $\tilde H_k$ is null homotopic in $\Map_*(S^{n+k},\bbK(\prod_{J_k}^{bd}\mcA_\Gamma(\Gamma K''))^\Gamma)$. Any such null homotopic can be used to extend $\tilde H_k$ to a map \[H_k\in \Map_*(S^n\wedge \Delta^{k+1},\bbK(\prod_{J_k}^{bd}\mcA_\Gamma(\Gamma K''))^\Gamma)\] with the properties stated above.\\
Since $Y$ was assumed to be finite dimensional, after finitely many steps we have constructed the required null homotopy $\{H_k\}$.
\end{proof}
\begin{lemma}
Let $X$ be a metric space. Then 
\[\prod_{I}^{bd}\mcA_\Gamma(X)_0\rightarrow \prod_I^{bd}\mcA_\Gamma(X)\rightarrow \prod_I\mcA_\Gamma(X)^\infty\]
is a Karoubi filtration.
\end{lemma}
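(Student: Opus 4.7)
The plan is to verify the $\prod_I^{bd}\mcA_\Gamma(X)_0$-filtered condition from \cref{def:kar}. Given objects $A=\{A_i\}_{i\in I}\in \prod_I^{bd}\mcA_\Gamma(X)$ and $U=\{U_i\}_{i\in I}\in \prod_I^{bd}\mcA_\Gamma(X)_0$, together with morphisms $f\colon A\to U$ of degree $R_f$ and $g\colon U\to A$ of degree $R_g$, I will produce a single decomposition $A\cong A^-\oplus A^+$ with $A^-\in \prod_I^{bd}\mcA_\Gamma(X)_0$ through which both $f$ and $g$ factor. The construction closely mirrors the support-shrinking step in the proof of \cref{prop:5.8}.

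For each $i\in I$ let $V_i\subseteq \supp(A_i)\subseteq Z=\Gamma\times X\times[0,1)$ be the set of those $z$ for which there exists $z'\in\supp(U_i)$ with $(f_i)_{z',z}\neq 0$ or $(g_i)_{z,z'}\neq 0$. Define $A_i^-$ by $(A_i^-)_z=(A_i)_z$ if $z\in V_i$ and $0$ otherwise, and $A_i^+$ by the complementary formula. Both inherit the finiteness conditions on objects from $A_i$, and the tautological isomorphism $A_i\cong A_i^-\oplus A_i^+$ has components of degree $0$, so it assembles into an isomorphism in $\prod_I^{bd}\mcA_\Gamma(X)$. By the very definition of $V_i$, the morphism $f_i$ vanishes on $A_i^+$ and the image of $g_i$ sits inside $A_i^-$, so $f$ and $g$ factor through $A^-:=\{A_i^-\}$ via the canonical projection and inclusion; the factorising arrows retain the degrees $R_f$ and $R_g$, hence live in $\prod_I^{bd}$.

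The heart of the argument is to show that $A^-\in \prod_I^{bd}\mcA_\Gamma(X)_0$, i.e.\ that for every $i$ the image of $V_i$ in $X\times[0,1)$ has no limit points on $X\times\{1\}$. Assume for contradiction that a sequence $(\gamma_n,x_n,t_n)\in V_i$ satisfies $(x_n,t_n)\to(x,1)$ for some $x\in X$, and pick witnesses $(\gamma_n',x_n',t_n')\in\supp(U_i)$. Since $U_i\in \mcA_\Gamma(X)_0$, the closure in $X\times[0,1]$ of the image of $\supp(U_i)$ is disjoint from $X\times\{1\}$, and its complement $\mcU$ is therefore an open neighborhood of $(x,1)$. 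Continuous control of $f_i$ and $g_i$ at $1$ supplies a neighborhood $\mcV$ of $(x,1)$ such that no matrix coefficient of $f_i$ or $g_i$ can connect a point with $X\times[0,1)$-coordinate in $\mcV$ to one with coordinate outside $\mcU$. For sufficiently large $n$ one has $(x_n,t_n)\in \mcV$ while $(x_n',t_n')\in\supp(U_i)\subseteq X\times[0,1)\setminus\mcU$, contradicting the choice of the witness.

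I expect this last step to be the only non-routine one: it requires combining the support condition defining $\mcA_\Gamma(X)_0$ (applied to $U_i$) with the continuous-control-at-$1$ condition (applied to $f_i$ and $g_i$) in the right order. Everything else, namely compatibility of the decomposition with the bounded product category and the actual factorisations of $f$ and~$g$, unwinds directly from the definitions; in particular no uniformity in $i\in I$ is required in the third paragraph because the $_0$-condition is imposed component-wise.
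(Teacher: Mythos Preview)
Your argument correctly verifies that $\prod_I^{bd}\mcA_\Gamma(X)$ is $\prod_I^{bd}\mcA_\Gamma(X)_0$-filtered in the sense of \cref{def:kar}, and the continuous-control step in your third paragraph is exactly the right mechanism. However, the lemma is asserting something further: the third term of the displayed sequence is the \emph{unrestricted} product $\prod_I\mcA_\Gamma(X)^\infty$, not $\prod_I^{bd}\mcA_\Gamma(X)^\infty$, and part of the claim is that this really is the quotient. Your proof does not address why the natural bounded-product quotient coincides with the unrestricted one; as it stands you have only established the first arrow as a Karoubi filtration, with quotient $\prod_I^{bd}\mcA_\Gamma(X)^\infty$.

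This identification of the quotient is precisely what the paper's own proof isolates. The paper shows that every morphism in $\mcA_\Gamma(X)^\infty$ admits an $R$-bounded representative in $\mcA_\Gamma(X)$ for \emph{every} $R>0$, so the induced degree filtration on $\mcA_\Gamma(X)^\infty$ is trivial and hence $\prod_I^{bd}\mcA_\Gamma(X)^\infty=\prod_I\mcA_\Gamma(X)^\infty$. The argument again uses continuous control at $1$, but in the opposite direction from yours: given a representative $\phi$ of a morphism in the quotient, one chooses for each $x\in X$ a neighbourhood $V_x\subseteq B_{R/2}(x)\times[0,1]$ of $(x,1)$ on which $\phi$ already only reaches into $B_{R/2}(x)\times[0,1]$, and then replaces $\phi$ by its restriction to $V:=\bigcup_x V_x$; the discarded piece is supported away from $X\times\{1\}$ and hence factors through $\mcA_\Gamma(X)_0$. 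To complete your proof you should add this step. Your verification of the filtered condition is correct and is essentially what the paper takes for granted in its opening reduction.
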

\begin{proof}
To show that the sequence is Karoubi it suffices to show that for every morphism $f\colon M\rightarrow N$ in $\mcA_\Gamma(X)^\infty$ and every $R>0$ there exists a morphism $\phi'$ in $\mcA_\Gamma(X)$ that is $R$-bounded and represents $f$.\\
Let $\phi$ be a representative of $f$. For every $x\in X$ let $U_x:=B_{R/2}(x)\times [0,1]\subseteq X\times [0,1]$. Since $\phi$ is continuously controlled at 1, there exists a neighborhood $V_x\subseteq U_x$ of $(x,1)\in X\times[0,1]$ such that $\phi_{(\gamma',y),(\gamma,v)}=0$ for all $\gamma,\gamma'\in\Gamma,v\in V_x,y\notin U_x$. Define $V:=\bigcup_{x\in X}V_x$. Then $M|_{\Gamma\times X\times[0,1]\backslash \Gamma\times V}$ is an object in $\mcA_\Gamma(X)_0$ and therefore the morphism $\phi'\colon M\rightarrow N$ defined by
\[\phi'_{(\gamma',y),(\gamma,v)}=\left\{\begin{matrix}\phi_{(\gamma',y),(\gamma,v)}&v\in V\\0&\text{else}\end{matrix}\right.\]
also represents $f$.\\
$\phi'$ is $R$-bounded, since $\phi'_{(\gamma',y),(\gamma,v)}\neq 0$ implies $v\in V_x\subseteq B_{R/2}(x)\times[0,1]$ and\linebreak$y\in U_x\subseteq B_{R/2}(x)\times[0,1]$ for some $x\in X$. Therefore, $d(\pr_X(v),\pr_X(y))<R$, where $\pr_X\colon X\times[0,1]\rightarrow X$ is the projection.
\end{proof}
By formally defining $\prod_I^{bd}\mcA_\Gamma(X)^\infty:=\prod_I\mcA_\Gamma(X)^\infty$ the above proposition and \cref{holim} imply that we get the following homotopy fibration sequence:
\[\Map^{bd}_\Gamma(Z,\bbK(\mcA_\Gamma(X)_0))\rightarrow \Map^{bd}_\Gamma(Z,\bbK(\mcA_\Gamma(X)))\rightarrow \Map^{bd}_\Gamma(Z,\bbK(\mcA_\Gamma(X)^\infty))\]
This can be used to prove the following version of the Descent Principle:
\begin{thm}
\label{descent}
Let $\Gamma$ be a discrete group admitting a finite dimensional model for $\underbar E\Gamma$, and let $X$ be a $\Gamma$-CW complex such that for every $\Gamma$-set $J$ with finite stabilizers, every finite subcomplex $K\subseteq X$ and every $x\in\pi_n(\bbK(\prod_{j\in J}^{bd}\mcA_\Gamma(\Gamma K))^\Gamma)$ there exists a finite subcomplex $K'\subseteq X$ containing $K$ such that under the map induced by the inclusion
\[\prod_{j\in J}^{bd}\mcA_\Gamma(\Gamma K)\rightarrow \prod_{j\in J}^{bd}\mcA_\Gamma(\Gamma K'),\]
$x$ maps to zero. Then the map
\[H_*^\Gamma(X;\bbK_\mcA)\rightarrow K_*(\mcA[\Gamma])\]
is a split injection.
\end{thm}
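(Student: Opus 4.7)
The plan is to compare the $K$-theory fibration from the preceding Karoubi filtration with its image under $\Map_\Gamma^{bd}(E,-)$, where by \cref{metric} I may take $E := \underbar E\Gamma$ to be a finite-dimensional simplicial $\Gamma$-CW complex with a proper $\Gamma$-invariant metric. For each finite subcomplex $K\subseteq X$, passing to $\Gamma$-fixed points on the top row and using that $\Map_\Gamma^{bd}(E,-)$ preserves fibrations (as established immediately before the theorem via \cref{holim}) on the bottom row produces a commutative diagram of homotopy fibration sequences
\[\xymatrix@C=.9em{
\bbK(\mcA_\Gamma^\Gamma(\Gamma K)_0) \ar[r] \ar[d]_{G_0} & \bbK(\mcA_\Gamma^\Gamma(\Gamma K)) \ar[r] \ar[d]_{G} & \bbK(\mcA_\Gamma^\Gamma(\Gamma K)^\infty) \ar[d]_{G_\infty} \\
\Map^{bd}_\Gamma(E, \bbK\mcA_\Gamma(\Gamma K)_0) \ar[r] & \Map^{bd}_\Gamma(E, \bbK\mcA_\Gamma(\Gamma K)) \ar[r] & \Map^{bd}_\Gamma(E, \bbK\mcA_\Gamma(\Gamma K)^\infty)
}\]
with vertical maps induced by the diagonal $G$ of \cref{maps}.

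Next I pass to the colimit over finite subcomplexes $K\subseteq X$. The hypothesis of the theorem is exactly that $\colim_K\pi_n\bbK(\prod_J^{bd}\mcA_\Gamma(\Gamma K))^\Gamma = 0$ for every $n$ and every $\Gamma$-set $J$ with finite stabilizers, and since $E$ has finite stabilizers and is finite dimensional, \cref{bounded} applied with $Y:=E$ yields $\colim_K\pi_n\Map^{bd}_\Gamma(E,\bbK\mcA_\Gamma(\Gamma K))=0$. Consequently the middle of the bottom fibration becomes contractible in the colimit, so the connecting map $\partial^{\mathrm{bot}}$ is an isomorphism there. Cocompactness of each $\Gamma K$ identifies $\colim_K\bbK(\mcA_\Gamma^\Gamma(\Gamma K)_0)\simeq\bbK(\mcA[\Gamma])$ (as in the remark following the definition of $\mcA_{\Gamma,p}^G(X)_0$), while $\colim_K\pi_*\bbK(\mcA_\Gamma^\Gamma(\Gamma K)^\infty)$ computes $H^\Gamma_{*-1}(X;\bbK_\mcA)$, and under these identifications the top connecting map is the assembly map $\alpha$.

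The splitting of $\alpha$ is extracted from the commutative square of connecting maps, which reads $G_0\circ\alpha = \partial^{\mathrm{bot}}\circ G_\infty$ in the colimit, together with the isomorphism $\partial^{\mathrm{bot}}$: the composite $(\partial^{\mathrm{bot}})^{-1}\circ G_0$ already transports $K_*(\mcA[\Gamma])$ back to the bounded-mapping-space side, and pre-composing with a section of $G_\infty$ (in the colimit) then yields a retract of $\alpha$ valued in $H^\Gamma_*(X;\bbK_\mcA)$. I expect the main obstacle to be exactly this final step — producing the section of $G_\infty$, i.e.\ an honest descent from the bounded mapping space down to the fixed-point spectrum. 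This is where finite-dimensionality of $\underbar E\Gamma$ enters essentially, through the inductive null-homotopy construction in the proof of \cref{bounded}, in the spirit of classical Carlsson--Pedersen descent; everything else is a relatively formal consequence of the Karoubi filtration and the vanishing supplied by the hypothesis.
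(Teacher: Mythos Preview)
Your overall architecture matches the paper's: set up the fibration sequences for fixed points and for $\Map_\Gamma^{bd}(E,-)$, use the hypothesis together with \cref{bounded} to make the bottom connecting map an isomorphism in the colimit, and then deduce split injectivity of the assembly map from split injectivity of $G_\infty$. The identifications you make with the assembly map and with $\bbK(\mcA[\Gamma])$ are the same as in the paper.

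The genuine gap is exactly where you flag it, but your proposed source for the missing ingredient is wrong. You need a \emph{retraction} of $G_\infty$ (a left inverse, since from $G_0\circ\alpha=\partial^{\mathrm{bot}}\circ G_\infty$ and $\partial^{\mathrm{bot}}$ invertible one gets $\rho\circ(\partial^{\mathrm{bot}})^{-1}\circ G_0$ as a retraction of $\alpha$ once $\rho\circ G_\infty=\id$), not a section; and this retraction does \emph{not} come from \cref{bounded} or from the inductive null-homotopy argument there. That proposition concerns the middle term $\bbK\mcA_\Gamma(\Gamma K)$, not the germ category $\bbK\mcA_\Gamma(\Gamma K)^\infty$, and it produces vanishing, not a comparison with fixed points. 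The paper instead introduces a \emph{third} row, the ordinary (unbounded) mapping space $\Map_\Gamma(E,\bbK\mcA_\Gamma(\Gamma K)^\infty)$, with vertical maps $f=G_\infty$ and $g=F$ from \cref{maps}, and invokes \cite[Theorem~6.3]{rosenthal}: the composite $g\circ f$ from $\bbK(\mcA_\Gamma(\Gamma K)^\infty)^\Gamma$ to the homotopy fixed points is a weak equivalence. This immediately gives a retraction of $G_\infty$, and in fact (using that $K$-theory commutes with products and the convention $\prod^{bd}_I\mcA_\Gamma(X)^\infty=\prod_I\mcA_\Gamma(X)^\infty$) both $f$ and $g$ are weak equivalences. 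So the descent input you are looking for is an external theorem about the $(\_)^\infty$ categories, not an internal consequence of the bounded-product machinery; without that citation your argument is incomplete.
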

\begin{proof}
Let $Y$ be a finite dimensional, simplicial $\Gamma$-CW-model for $\underbar E\Gamma$.\\
Consider the following commutative diagram:
\[\xymatrix{
\bbK(\mcA_\Gamma(\Gamma K)_0)^\Gamma\ar[d]\ar[r]&\bbK(\mcA_\Gamma(\Gamma K))^\Gamma\ar[r]\ar[d]&\bbK(\mcA_\Gamma(\Gamma K)^\infty)^\Gamma\ar[d]^f\\
\Map_\Gamma^{bd}(Y,\bbK(\mcA_\Gamma(\Gamma K)_0))\ar[r]\ar[d]&\Map_\Gamma^{bd}(Y,\bbK(\mcA_\Gamma(\Gamma K)))\ar[r]\ar[d]&\Map_\Gamma^{bd}(Y,\bbK(\mcA_\Gamma(\Gamma K)^\infty))\ar[d]^g\\
\Map_\Gamma(Y,\bbK(\mcA_\Gamma(\Gamma K)_0))\ar[r]&\Map_\Gamma(Y,\bbK(\mcA_\Gamma(\Gamma K)))\ar[r]&\Map_\Gamma(Y,\bbK(\mcA_\Gamma(\Gamma K)^\infty))}\]
All three rows in this diagram are induced by Karoubi filtrations and are, therefore, homotopy fibrations. The vertical maps are those from \cref{maps}. The composition $g\circ f$ is a weak homotopy equivalence by \cite[Theorem 6.3]{rosenthal}. Therefore, $f$ induces a split injection on homotopy groups.

Let $p\colon X\to \Gamma\backslash X$ be the projection map. Then we have the following equivalence
\[\mcA_{p,\Gamma}(X)\simeq \colim_{A\subseteq X}\mcA_\Gamma(A)\simeq\colim_{K\subseteq X}\mcA_\Gamma(\Gamma K),\]
where the first colimit is taken over all cocompact subsets $A\subseteq X$ and the second over all finite subcomplexes $K\subseteq X$.\\
Thus, after taking homotopy groups and colimits over finite subcomplexes $K\subseteq X$ we get the following:
\[\xymatrix{
\pi_{n+1}\bbK(\mcA_{p,\Gamma}(X)^\infty)^\Gamma\ar[d]^{f_*}\ar[r]^\partial&\pi_n\bbK(\mcA_{p,\Gamma}(X)_0)\ar[d]\\
\colim_K\pi_{n+1}(\Map^{bd}_\Gamma(Y,\bbK(\mcA_\Gamma(\Gamma K)^\infty)))\ar[r]^\partial&\colim_K\pi_n(\Map_\Gamma^{bd}(Y,\bbK(\mcA_\Gamma(\Gamma K)_0)))}\]
The lower horizontal map is an isomorphism by \cref{bounded} and $f_*$ is split injective as stated above. So the upper horizontal map is split injective. This map is equivalent to the map in the theorem.
\end{proof}
\begin{rem}
By \cite{MR1351941} $K$-theory commutes with products. Since we defined $\prod^{bd}_I\mcA_\Gamma(X)^\infty=\prod_I\mcA_\Gamma(X)^\infty$ this implies that the maps
\[F_k\colon\Map_G\Bigg(\Delta^k,\bigg(\bbK\prod^{bd}_{J_k}\mcA_\Gamma(X)^\infty\bigg)_n\Bigg)\rightarrow\Map_G \Bigg(\coprod_{J_k}\Delta^k,(\bbK\mcA_\Gamma(X)^\infty)_n\Bigg).\]
from \cref{maps} are weak equivalences. Therefore, also the map
\[g\colon \Map_G^{bd}(Y,\bbK(\mcA_\Gamma(\Gamma K)^\infty))\to\Map_G(Y,\bbK(\mcA_\Gamma(\Gamma K)^\infty))\]
from the proof of \cref{descent} is a weak equivalence. Since $g\circ f$ is a weak equivalence by \cite[Theorem 6.3]{rosenthal} as mentioned also the map
\[f\colon \bbK(\mcA_\Gamma(\Gamma K)^\infty)^\Gamma\to \Map_G^{bd}(Y,\bbK(\mcA_\Gamma(\Gamma K)^\infty))\]
is a weak equivalence.
\end{rem}
\section{Proof of the main theorem}
\label{proof}
\begin{thm}
\label{main}
Let $\Gamma$ be a discrete group with fqFDC and with an upper bound on the order of its finite subgroups and let $\mcA$ be an additive $\Gamma$-category. Assume that there is a finite dimensional $\Gamma$-CW-model for the universal space for proper \mbox{$\Gamma$-actions $\underbar E\Gamma$}.\\
Then the assembly map in algebraic $K$-theory $H_*^\Gamma(\underbar E\Gamma;\bbK_\mcA)\rightarrow K_*(\mcA[\Gamma])$ is a split injection.
\end{thm}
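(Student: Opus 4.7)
The plan is to deduce \cref{main} from the Descent Principle \cref{descent} applied to $X=\underbar E\Gamma$. By \cref{metric} we may assume that $\underbar E\Gamma$ is a finite-dimensional simplicial $\Gamma$-CW-complex equipped with a proper $\Gamma$-invariant metric. What must be verified is therefore the vanishing hypothesis of \cref{descent}: given any $\Gamma$-set $J$ with finite stabilizers, any finite subcomplex $K\subseteq \underbar E\Gamma$ and any class $x\in\pi_n\bbK(\prod_J^{bd}\mcA_\Gamma(\Gamma K))^\Gamma$, produce a finite $K'\supseteq K$ in which $x$ dies.

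Because the order of finite subgroups of $\Gamma$ is bounded by some $N$, the point stabilizers of $J$ have order at most $N$. Decomposing $J$ into $\Gamma$-orbits $\coprod_\alpha \Gamma/H_\alpha$ with $|H_\alpha|\leq N$ identifies the $\Gamma$-fixed point category with $\prod_\alpha^{bd}\mcA_\Gamma(\Gamma K)^{H_\alpha}$. For each finite $H\leq \Gamma$, I would then filter $\mcA_\Gamma(\Gamma K)^H$ by the $H$-invariant families $\mcS\in J(H)$ of subgroups of $H$, taking the full subcategory of objects supported over $\Gamma\times (\Gamma K)^\mcS\times[0,1)$ in the sense of \cref{nota:1.9}. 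Enlarging a family $\mcS'$ by adding one $H$-conjugacy class $[H_0]$ gives a Karoubi filtration (in the sense of \cref{def:kar}) whose quotient is equivalent to a controlled category of the form $(\mcA^{H_0})_c(H\backslash((\Gamma K)^\mcS\setminus (\Gamma K)^{\mcS'}))$. Assembling over all pairs $(H,\mcS)\in J_N$ and all $\alpha$, the task reduces to proving vanishing (after enlarging $K$) of the $K$-theory of products of the form $\prod_{(H,\mcS)\in J_N}^{bd}(\mcA^{H_\mcS})_c(H\backslash(\Gamma K)^\mcS)$, with $J_N$ as in \cref{uni2}.

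To handle this, pick a basepoint $k_0\in K$ and set $S:=\Gamma k_0\subseteq \underbar E\Gamma$. Because stabilizers have order at most $N$, $S$ is a discrete space of bounded geometry and is $\Gamma$-equivariantly coarsely equivalent to $\Gamma$ with its proper left-invariant metric; since $K$ is bounded, $\Gamma K\subseteq S^R$ for some $R>0$. By \cref{uni1}, $\underbar E\Gamma$ is uniformly $\mcF in$-contractible with respect to $\Gamma K$, hence by \cref{uni2} the space $\coprod_{(H,\mcS)\in J_N}H\backslash \underbar E\Gamma^\mcS$ is uniformly contractible with respect to $\coprod_{(H,\mcS)\in J_N}H\backslash(\Gamma K\cap \underbar E\Gamma^\mcS)$. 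Since $\Gamma$ has fqFDC and since fqFDC is preserved under $\Gamma$-equivariant coarse embeddings (\cref{coarse invariance}), the bounded-geometry subspace $\coprod_{(H,\mcS)\in J_N}H\backslash(S\cap \underbar E\Gamma^\mcS)$ has FDC. Applying \cref{compare} and \cref{compare1} produces continuous metrically coarse maps comparing the controlled categories over $H\backslash(\Gamma K)^\mcS$ with those over the Rips complexes of this discrete subspace, whose composite realizes the inclusion on $K$-theory after enlarging $K$ to accommodate the image of the constructed maps.

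By the vanishing \cref{vanishing}, the colimit of the $K$-theory of these Rips complexes over the scale $s$ vanishes. Feeding this through the Karoubi-filtration long exact sequences, induction on the families $\mcS$ (there are only finitely many conjugacy classes of subgroups to traverse since each $|H|\leq N$) yields that every class in $\pi_n\bbK(\prod_J^{bd}\mcA_\Gamma(\Gamma K))^\Gamma$ dies in $\pi_n\bbK(\prod_J^{bd}\mcA_\Gamma(\Gamma K'))^\Gamma$ for a sufficiently large finite subcomplex $K'$. The hypothesis of \cref{descent} is therefore met, and the assembly map splits. The main obstacle I foresee is the bookkeeping in the reduction step: correctly identifying the Karoubi subquotients along the filtration by families $\mcS$ as geometric-module categories over $H\backslash((\Gamma K)^\mcS\setminus (\Gamma K)^{\mcS'})$, and arranging the maps of \cref{compare1} uniformly across all $(H,\mcS)\in J_N$ so that a single enlargement $K\rightsquigarrow K'$ works simultaneously, which is precisely the role of the uniform bound $N$ on the order of finite subgroups.
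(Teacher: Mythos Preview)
Your proposal is correct and follows essentially the same architecture as the paper: reduce via the Descent Principle to the vanishing of $\colim_K\pi_n\bbK(\prod_J^{bd}\mcA_\Gamma(\Gamma K))^\Gamma$, decompose $J$ into orbits to obtain a bounded product over finite stabilizers, filter each factor by the $G$-invariant subspaces $Z_k^G=(\underbar E\Gamma)^{\mcS_k^G}$, identify the Karoubi subquotients, and kill each stratum using uniform $\mcF in$-contractibility (\cref{uni1}, \cref{uni2}), the Rips-complex comparison (\cref{compare}, \cref{compare1}) and the FDC vanishing theorem (\cref{vanishing}). Two small points to tighten: the coefficient category appearing on a subquotient is $\mcA[H_0]$ (the twisted group category), not the fixed-point category $\mcA^{H_0}$; and for the identification of the subquotient to go through you must add conjugacy classes in order of \emph{decreasing} cardinality (so that the stabilizer on $Z_k\setminus Z_{k-1}$ is exactly a conjugate of $H_k$), which is precisely the ordering the paper imposes and is the content of \cref{x}.
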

Let $X$ be a finite dimensional, simplicial $\Gamma$-CW-model for $\underbar E\Gamma$ with a proper \mbox{$\Gamma$-invariant} metric. Such a model exists by the assumptions of \cref{main} and \cref{metric}.\\
Let $G$ be a finite subgroup of $\Gamma$. Let $G=H_0^G,H_1^G,\ldots,H_{m_G}^G=\{e\}$ be a representing system for the conjugacy classes of subgroups of $G$ ordered by cardinality, that is $|H_i^G|\geq |H_{i+1}^G|$.\\
Let $m:=\max_{G}m_G$ and define $H_l^G:=\{e\}$ for all $m_G\leq l\leq m$.\\
\\
For each $k,~0\leq k\leq m$, define $\mcS_k^G:=\{(H_i^G)^g\mid 0\leq i\leq k,g\in G\}$ and $Z_k^G:=X^{\mcS_k^G}$, see \cref{nota:1.9}. Since $\mcS_k^G$ is closed under conjugation by $G$, the space $Z_k^G$ is $G$-invariant for every $k$.
\begin{lemma}
\label{xx}
Under the assumptions of \cref{main}, for every $k$ ($0\leq k\leq m$), every $n\in\bbZ$ and every family of finite subgroups $\{G_i\}_{i\in I}$ of $\Gamma$, we have
\[\colim_K\pi_n\bbK\left(\prod_{i\in I}^{bd}\mcA[H_k^{G_i}]_c(G_i\backslash (Z_k^{G_i}\cap\Gamma K))\right)=0,\]
where the colimit is taken over all finite subcomplexes $K\subseteq X$.
\end{lemma}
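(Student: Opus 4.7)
My plan is to produce, for each element $x$ in the $n$th homotopy group of $\bbK\bigl(\prod_{i\in I}^{bd}\mcA[H_k^{G_i}]_c(G_i\backslash(Z_k^{G_i}\cap\Gamma K))\bigr)$, a finite enlargement $K'\supseteq K$ on which $x$ vanishes, by factoring the inclusion into the $K$-theory of $\prod_{i\in I}^{bd}\mcA[H_k^{G_i}]_c(G_i\backslash X^{\mcS_k^{G_i}})$ through a Rips complex and then invoking the vanishing theorem \cref{vanishing}. The upper bound $n_0$ on the order of finite subgroups of $\Gamma$ allows every $G_i$ to sit inside a single index set $J_{n_0}$, and fqFDC of $\Gamma$ provides the FDC of the bounded geometry subspace that feeds \cref{vanishing}.

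For the setup, \cref{uni1} shows $X$ is uniformly $\mcF in$-contractible with respect to $\Gamma K$, so by \cref{uni2} the space
\[W := \coprod_{(G,\mcS)\in J_{n_0}} G\backslash X^\mcS\]
is uniformly contractible with respect to
\[V := \coprod_{(G,\mcS)\in J_{n_0}} G\backslash(\Gamma K\cap X^\mcS).\]
Since $X$ is locally finite (\cref{metric}), the vertex set $S$ of $\Gamma K$ is a $\Gamma$-invariant locally finite subset that is a finite union of $\Gamma$-orbits $\Gamma v_1\cup\cdots\cup\Gamma v_N$ with finite stabilizers, and $\Gamma K\subseteq S^R$ where $R$ is an upper bound on simplex diameters. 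Each quotient $G\backslash(S\cap X^\mcS)$ has bounded geometry; because each $\Gamma v_j$ is coarsely equivalent to $\Gamma$ (as the stabilizer $\Gamma_{v_j}$ is finite), fqFDC of $\Gamma$ together with \cref{coarse invariance} and \cref{finunion} yields that
\[T := \coprod_{(G,\mcS)\in J_{n_0}} G\backslash(S\cap X^\mcS)\]
has FDC.

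Restricting the index set to $\{(G_i,\mcS_k^{G_i})\}_{i\in I}$ gives a subfamily which, together with the family of additive categories $\mcA[H_k^{G_i}]$, satisfies the hypotheses of \cref{compare1}. Setting $S_i := G_i\backslash(S\cap X^{\mcS_k^{G_i}})$, we obtain for each $s>2R$ induced maps $(f_s)_*$ and $(g_s)_*$ on $K$-theory spectra whose composition realizes the map induced by the inclusion
\[\prod_{i\in I}^{bd}\mcA[H_k^{G_i}]_c(G_i\backslash(Z_k^{G_i}\cap\Gamma K))\hookrightarrow\prod_{i\in I}^{bd}\mcA[H_k^{G_i}]_c(G_i\backslash X^{\mcS_k^{G_i}})\]
and which factors through $\bbK\prod_{i\in I}^{bd}\mcA[H_k^{G_i}]_c(P_s(S_i))$. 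By \cref{vanishing}, the image of $x$ in $\colim_s \pi_n\bbK\prod_{i\in I}^{bd}\mcA[H_k^{G_i}]_c(P_s(S_i))$ vanishes; hence $x$ becomes zero in $\pi_n\bbK\prod_{i\in I}^{bd}\mcA[H_k^{G_i}]_c(G_i\backslash X^{\mcS_k^{G_i}})$. Since the compactly supported controlled categories over the ambient $G_i\backslash X^{\mcS_k^{G_i}}$ are the filtered colimit over $K'\supseteq K$ of those over $G_i\backslash(Z_k^{G_i}\cap\Gamma K')$, and $K$-theory commutes with such filtered colimits, there exists a finite $K'\supseteq K$ in which $x$ already vanishes.

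The principal obstacle is establishing that the disjoint union $T$ has FDC; this is precisely where the strong hypothesis of fqFDC of $\Gamma$ and the global bound $n_0$ on orders of finite subgroups are essential, and it is the place where \cref{uni2} must be set up at the correct level of generality (so that uniform contractibility is available across all pairs $(G,\mcS)\in J_{n_0}$ simultaneously). Beyond this, the argument is formal: \cref{vanishing} is already stated for arbitrary families of additive categories, so it applies to $\mcA[H_k^{G_i}]$ without modification, and the interleaving of the $s$- and $K'$-colimits is routine.
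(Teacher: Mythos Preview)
Your overall strategy matches the paper's: use \cref{uni1} and \cref{uni2} to get uniform contractibility of the family of quotients, produce a bounded-geometry subset with FDC inside each quotient, apply \cref{compare}/\cref{compare1} to factor through Rips complexes, and invoke \cref{vanishing}. The paper uses a single orbit $\Gamma x$ (for one $x\in K$) and a projection $\rho_i$ to build $S_i$, while you use the whole vertex set of $\Gamma K$; both choices work and the FDC argument is the same in spirit.

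There is, however, a genuine gap in your last paragraph. The category $\mcA_c(G_i\backslash X^{\mcS_k^{G_i}})$ carries no compact-support condition on objects, so it is \emph{not} the filtered colimit of the categories $\mcA_c\bigl(G_i\backslash(Z_k^{G_i}\cap\Gamma K')\bigr)$; an object of the ambient category may have support meeting infinitely many $\Gamma$-translates of any finite complex. Even worse, the bounded product $\prod_i^{bd}$ does not commute with filtered colimits, so knowing that $x$ dies in $\pi_n\bbK\prod_i^{bd}\mcA[H_k^{G_i}]_c(G_i\backslash X^{\mcS_k^{G_i}})$ does not by itself produce a finite $K'$ at which $x$ dies.

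The fix is exactly what the paper does: do not pass to the ambient space at all. Since the maps $f_s,g_s$ of \cref{compare} are metrically coarse, there is an $R'>0$ (depending on $s$) with
\[
g_s\circ f_s\bigl(G_i\backslash(Z_k^{G_i}\cap\Gamma K)\bigr)\subseteq\bigl(G_i\backslash(Z_k^{G_i}\cap\Gamma K)\bigr)^{R'}
\]
uniformly in $i$. By properness of the metric on $X$ there is a finite subcomplex $K'\supseteq K$ with $K^{R'}\subseteq K'$, and hence $g_s$ already lands in $G_i\backslash(Z_k^{G_i}\cap\Gamma K')$. Thus the map on $K$-theory induced by the inclusion $K\subseteq K'$ factors through $\bbK\prod_i^{bd}\mcA[H_k^{G_i}]_c(P_s(S_i))$, and \cref{vanishing} finishes the argument.
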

\begin{proof}
Let $K\subseteq X$ be a finite subcomplex. By \cref{uni1} and \cref{uni2} the space $\coprod_{i\in I}G_i\backslash Z_k^{G_i}$ is uniformly contractible with respect to $\coprod_{i\in I}G_i\backslash(Z_k^{G_i}\cap \Gamma K)$.\\
Let $x\in K$ and choose $R>0$ such that $K\subseteq B_R(x)$. Since $Z_k^{G_i}\cap\Gamma K$ is $G_i$-invariant, $(Z_k^{G_i}\cap\Gamma K)^R$ is $G_i$-invariant as well. Choose maps \[\rho_i\colon G_i\backslash (\Gamma x\cap (Z_k^{G_i}\cap\Gamma K)^R)\rightarrow G_i\backslash (Z_k^{G_i}\cap\Gamma K)\] with $d(y,\rho_i(y))\leq R$ for all $y\in G_i\backslash (\Gamma x\cap (Z_k^{G_i}\cap\Gamma K)^R)$ and define $S_i:=\im(\rho_i)$.\\
We have $G_i\backslash (\Gamma K\cap Z_k^{G_i})\subseteq S_i^{2R}$, since $(Z_k^{G_i}\cap\Gamma K)\subseteq(\Gamma x\cap (Z_k^{G_i}\cap\Gamma K)^R)^R$. Furthermore, $\coprod_{i\in I}S_i\subseteq \coprod_{i\in I}G_i\backslash (Z_k^{G_i}\cap\Gamma K)$ is a subspace with bounded geometry because $\Gamma x$ has bounded geometry. So by \cref{compare} and \cref{compare1} for every $s>4R$ the inclusion \[\bbK\left(\prod_{i\in I}^{bd}\mcA[H_k^{G_i}]_c(G_i\backslash (Z_k^{G_i}\cap\Gamma K))\right)\rightarrow\bbK\left(\prod_{i\in I}^{bd}\mcA[H_k^{G_i}]_c(G_i\backslash Z_k^{G_i})\right)\] factors through $\bbK\left(\prod_{i\in I}^{bd}\mcA[H_k^{G_i}]_c(P_s(S_i))\right)$. Since the maps $f_s,g_s$ constructed in \cref{compare} are metrically coarse, there exits $R'>0$ such that \[g_s\circ f_s(G_i\backslash (Z_k^{G_i}\cap\Gamma K))\subseteq (G_i\backslash (Z_k^{G_i}\cap\Gamma K))^{R'}\] and since $X$ is proper there exists a finite subcomplex $K'$ containing $K^{R'}$. This shows that already the inclusion
\[\bbK\left(\prod_{i\in I}^{bd}\mcA[H_k^{G_i}]_c(G_i\backslash (Z_k^{G_i}\cap\Gamma K))\right)\rightarrow\bbK\left(\prod_{i\in I}^{bd}\mcA[H_k^{G_i}]_c(G_i\backslash (Z_k^{G_i}\cap\Gamma K'))\right)\] factors through $\bbK\left(\prod_{i\in I}^{bd}\mcA[H_k^{G_i}]_c(P_s(S_i))\right)$. \\
Define a metric on $\Gamma$ by 
\[d(\gamma,\gamma')=\left\{\begin{matrix}0&\gamma=\gamma'\\2+d(\gamma x,\gamma' x)&\gamma\neq\gamma'\end{matrix}\right..\]
Since the stabilizer of $x$ is finite, this metric is proper. The map $\Gamma\rightarrow\Gamma x,\gamma\mapsto \gamma x$ is a coarse equivalence and therefore $\Gamma x$ has fqFDC by \cref{coarse invariance}.\\
$\coprod_{i\in I}S_i$ is coarsely equivalent to a subspace of $\coprod_{i\in I}G_i\backslash\Gamma x$ and so $\coprod_{i\in I}S_i$ has FDC.\\
It follows that $\colim_s\pi_n\bbK\left(\prod_{i\in I}^{bd}\mcA[H_k^{G_i}]_c(P_s(S_i))\right)=0$ by \cref{vanishing}. Therefore, the colimit in the statement of the proposition also vanishes.
\end{proof}
The following lemma is essentially the same as \cite[8.4]{bartels}.
\begin{lemma}
\label{x}
For each $k$, $1\leq k\leq m$, every finite subgroup $G\leq \Gamma$ and every finite subcomplex $K\subseteq X$ we have the following equivalence:
\[\mcA^{G}_\Gamma(Z_k^G\cap \Gamma K,Z^G_{k-1}\cap \Gamma K)\simeq \mcA[H_k^G]_c(G\backslash (Z_k^G\cap\Gamma K),G\backslash(Z^G_{k-1}\cap\Gamma K)).\]
\end{lemma}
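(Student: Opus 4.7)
The equivalence will be constructed in close analogy with \cite[Lemma 8.4]{bartels}, resting on two geometric observations. First, every point $x\in Z_k^G\setminus Z_{k-1}^G$ has $G$-stabilizer equal to some $G$-conjugate of $H_k^G$: it is fixed by some such conjugate by definition of $Z_k^G$, and if it were fixed by a strictly larger subgroup of $G$, then by our ordering of the $H_i^G$ by cardinality that subgroup would be $G$-conjugate to some $H_i^G$ with $i<k$, placing $x$ in $Z_{k-1}^G$. Second, under the global bound on the order of finite subgroups of $\Gamma$ (the standing hypothesis of \cref{main}) and the finiteness of $K$, the diameters of orbits $Gx$ with $x\in\Gamma K$ and $G\leq\Gamma$ finite are uniformly bounded; this is the same quotient-finiteness argument as in the proof of \cref{uni1}.

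First I would apply \cref{prop:5.8} to both sides to reduce to modules supported away from $Z_{k-1}^G\cap\Gamma K$, i.e.\ on $Y:=(Z_k^G\setminus Z_{k-1}^G)\cap\Gamma K$ on the left and on $G\backslash Y$ on the right. Next I would choose a set-theoretic section $s\colon G\backslash Y\to Y$ of the quotient map such that the stabilizer of every $s([x])$ is exactly $H_k^G$; by the first observation this is possible after translating within each orbit. The equivalence is then the pair of mutually inverse functors defined as follows. From right to left, an $\mcA[H_k^G]_c$-module $M$ is sent to the $G$-equivariant $\mcA_\Gamma$-module whose value at $g\cdot s([x])$ is a copy of the underlying $\mcA$-object of $M_{[x]}$, with the $G$-action determined by the given $H_k^G$-structure on $M_{[x]}$ and by translation of coset representatives; this is well defined because different choices of $g$ differ by an element of $H_k^G$. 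From left to right, a $G$-equivariant module $N$ is restricted along $s$, and each $N_{s([x])}$ is equipped with the $H_k^G$-action inherited from the stabilizer.

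The main obstacle is verifying that both functors respect the control conditions so that they descend to functors of the quotient categories of \cref{prop:5.8}. The $R$-boundedness of morphisms is preserved up to an additive constant bounded by the uniform orbit diameter supplied by the second geometric observation, and continuous control at $1$ is immediate from the continuity and finiteness of the $G$-action (it carries a neighborhood basis of $(x,1)$ to one of $(gx,1)$). The two compositions are naturally isomorphic to the identity: one is the identity on the nose, while the other uses translation from $s([gx])$ to $g\cdot s([x])$ by an appropriate element of $G$. Together with the additivity of the constructions, this yields the required equivalence of quotient categories.
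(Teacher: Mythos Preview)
Your proposal takes essentially the same approach as the paper: both simply invoke \cite[Lemma~8.4]{bartels}, and the paper supplies no argument beyond that citation. Your sketch of the induction--restriction equivalence via a section $s$ with stabilizer exactly $H_k^G$ is the right picture and your stabilizer computation (observation one) is correct.

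One point worth tightening in your write-up: you describe the functors in terms of ``values at $g\cdot s([x])$'', i.e.\ at points of $Y$, but objects of $\mcA_\Gamma$ are indexed by $\Gamma\times Y\times[0,1)$, not by $Y$. This is not merely cosmetic: the extra $\Gamma$-factor is what makes the $G$-action on the index set \emph{free}, and it is precisely this freeness that makes the descent to $G\backslash(\Gamma\times Y)$ straightforward and identifies the fibers over $G\backslash Y$ with copies of $H_k^G\backslash\Gamma$, matching the morphism structure of $\mcA[H_k^G]$. (Relatedly, an object of $\mcA[H_k^G]$ is just an $\mcA$-object and carries no $H_k^G$-action; the $H_k^G$ enters only through the morphisms, which correspond on the other side to the components $\psi_{(e,s([x])),(h,s([x]))}$ for $h\in H_k^G$.) Once this is made precise, the orbit-diameter bound you invoke is not needed for the equivalence for a single $G$; it becomes relevant only later when one passes to bounded products over families $\{G_i\}$.
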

~\\
~\\
\textit{Proof of \cref{main}~~}\\
By the \hyperref[descent]{Descent Principle \ref*{descent}} it suffices to show that for every integer $n$ and every $\Gamma$-set~$J$ with finite stabilizers the following holds
\[\colim_K\pi_n\bbK\left(\prod_{j\in J}^{bd}\mcA_\Gamma(\Gamma K)\right)^\Gamma=0\]
where the colimit is taken over all finite subcomplexes $K\subseteq X$.\\
But since $\left(\prod_{j\in J}^{bd}\mcA_\Gamma(\Gamma K)\right)^\Gamma$ is equivalent to $\prod_{\Gamma j\in\Gamma\backslash J}^{bd}\mcA_\Gamma^{\Gamma_j}(\Gamma K)$, where $\Gamma_j$ is the stabilizer of $j\in J$, this is equivalent to showing that for every family of finite subgroups~$\{G_i\}_{i\in I}$ over some index set $I$ the following holds
\[\colim_K\pi_n\bbK\left(\prod_{i\in I}^{bd}\mcA^{G_i}_\Gamma(\Gamma K))\right)=0.\]
We will proceed by induction on the filtrations
\[\underbar E\Gamma^{G_i} = Z_0^{G_i}\subseteq Z_1^{G_i}\subseteq\ldots\subseteq Z_{m-1}^{G_i}\subseteq Z_m^{G_i} = \underbar E\Gamma\]
defined above.\\
Since $G_i$ acts trivially on $\underbar E\Gamma^{G_i}$, $\mcA^{G_i}_\Gamma(\underbar E\Gamma^{G_i}\cap\Gamma K)$ is equivalent to $\mcA[G_i]_c(\underbar E\Gamma^{G_i}\cap\Gamma K)$ and by \cref{xx} we have
\[\colim_K\pi_n\bbK\prod_{i\in I}^{bd}\mcA[G_i]_c(\underbar E\Gamma^{G_i}\cap\Gamma K)=0.\]
This completes the base case of the induction.\\
\\
Assume now \[\colim_K\pi_n\bbK\prod_{i\in I}^{bd}\mcA^{G_i}_\Gamma(Z_k^{G_i}\cap\Gamma K)=0.\] We must show that also \[\colim_K\pi_n\bbK\prod_{i\in I}^{bd}\mcA^{G_i}_\Gamma(Z_{k+1}^{G_i}\cap\Gamma K)=0.\]
Consider the following Karoubi filtration coming from \cref{prop:5.8}
\[\prod_{i\in I}^{bd}\mcA^{G_i}_\Gamma(Z_{k}^{G_i}\cap\Gamma K)\rightarrow \prod_{i\in I}^{bd}\mcA^{G_i}_\Gamma(Z_{k+1}^{G_i}\cap\Gamma K)\rightarrow \prod_{i\in I}^{bd}\mcA^{G_i}_\Gamma(Z_{k+1}^{G_i}\cap\Gamma K,Z_k^{G_i}\cap\Gamma K))\]
which yields a homotopy fibration of spectra after applying $\bbK$. By using the induction hypothesis, we only need to show that \[\colim_K\pi_n\bbK\prod_{i\in I}^{bd}\mcA^{G_i}_\Gamma(Z_{k+1}^{G_i}\cap\Gamma K,Z_k^{G_i}\cap\Gamma K)=0.\]
By \cref{x} \[\prod_{i\in I}^{bd}\mcA^{G_i}_\Gamma(Z_{k+1}^{G_i}\cap\Gamma K,Z_k^{G_i}\cap\Gamma K)\] is equivalent to \[\prod_{i\in I}^{bd}\mcA[H_k^{G_i}]_c(G_i\backslash(Z_{k+1}^{G_i}\cap\Gamma K),G_i\backslash(Z_k^{G_i}\cap\Gamma K)),\] which fits into the Karoubi sequence
\[\prod_{i\in I}^{bd}\mcA[H_k^{G_i}]_c(G_i\backslash(Z_k^{G_i}\cap\Gamma K))\rightarrow \prod_{i\in I}^{bd}\mcA[H_k^{G_i}]_c(G_i\backslash(Z_{k+1}^{G_i}\cap\Gamma K))\]
\[\rightarrow \prod_{i\in I}^{bd}\mcA[H_k^{G_i}]_c(G_i\backslash(Z_{k+1}^{G_i}\cap\Gamma K),G_i\backslash(Z_k^{G_i}\cap\Gamma K)).\]
By \cref{xx} we have
\[\colim_K\pi_n\bbK\prod_{i\in I}^{bd}\mcA[H_k^{G_i}]_c(G_i\backslash(Z_k^{G_i}\cap\Gamma K))=\colim_K\pi_n\bbK\prod_{i\in I}^{bd}\mcA[H_k^{G_i}]_c(G_i\backslash(Z_{k+1}^{G_i}\cap\Gamma K))=0.\]
Therefore, also \[\colim_K\pi_n\bbK\prod_{i\in I}^{bd}\mcA[H_k^{G_i}]_c(G_i\backslash(Z_{k+1}^{G_i}\cap\Gamma K),G_i\backslash(Z_k^{G_i}\cap\Gamma K))=0.\]\qed
\section{L-theory}
\label{ltheory}
As in \cite{bartels} we get the following $L$-theoretic version of \cref{main}.
\begin{thm}
Let $\Gamma$ be a discrete group with fqFDC and $\mcA$ an additive\linebreak \mbox{$\Gamma$-category} with involution. Assume that there is a finite dimensional $\Gamma$-CW-model for the universal space for proper $\Gamma$-actions $\underbar E\Gamma$ and that there is an upper bound on the order of finite subgroups of $\Gamma$. Assume further that for every finite subgroup $G\leq\Gamma$ there is an $i_0\in\bbN$ such that for $i\geq i_0$, $K_{-i}(\mcA[G])=0$, where $\mcA$ is considered only as an additive category.\\
Then the assembly map in $L$-theory $H_*^\Gamma(\underbar E\Gamma;\bbL^{\langle-\infty\rangle}_\mcA)\rightarrow L^{\langle-\infty\rangle}_*(\mcA[\Gamma])$ is a split  injection.
\end{thm}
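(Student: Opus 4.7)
The plan is to mimic the proof of \cref{main} step by step, replacing the $K$-theory spectrum $\bbK$ by the ultimate lower $L$-theory spectrum $\bbL^{\langle-\infty\rangle}$ throughout, and to check that each of the three ingredients in that argument -- the Descent Principle, the vanishing theorem on the Rips complex, and the Karoubi-filtration machinery for the stratification $Z_0^G\subseteq\ldots\subseteq Z_m^G=\underbar E\Gamma$ -- continues to work in the $L$-theoretic setting.

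First I would establish an $L$-theoretic analogue of \cref{descent}. The bounded mapping space $\Map_\Gamma^{bd}(Y,\bbL^{\langle-\infty\rangle}\mcA)$ is defined verbatim as in \cref{mappingspace}, its characterisation as a homotopy limit (\cref{holim}) only used formal properties of $\bbK$ and applies equally to $\bbL^{\langle-\infty\rangle}$. Carlsson--Pedersen style arguments show that under the hypothesis of a finite dimensional model for $\underbar E\Gamma$ together with an upper bound on orders of finite subgroups, the analogue of the Rosenthal weak equivalence $g\circ f$ used in the $K$-theoretic Descent Principle goes through also for $\bbL^{\langle-\infty\rangle}$; here the vanishing of $K_{-i}(\mcA[G])$ for $i\gg 0$ is exactly what is needed to apply Ranicki-style Rothenberg sequences and conclude that the $\bbL^{\langle-\infty\rangle}$-theory Karoubi sequences are indeed homotopy fibrations (so that the three rows of the main diagram of \cref{descent} remain fibration sequences).

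Next I would carry out the same induction as in the proof of \cref{main} along the stratification by the sets $\mcS_k^G$, using the equivalence of \cref{x} (which is purely categorical and involves no $K$-theory) to reduce at each step to vanishing statements of the form
\[
\colim_K \pi_n\bbL^{\langle-\infty\rangle}\Bigl(\prod_{i\in I}^{bd}\mcA[H_k^{G_i}]_c\bigl(G_i\backslash(Z_k^{G_i}\cap\Gamma K)\bigr)\Bigr)=0.
\]
To obtain these I would replay the proof of \cref{xx}: the relative uniform contractibility of $\coprod_i G_i\backslash Z_k^{G_i}$ with respect to $\coprod_i G_i\backslash(Z_k^{G_i}\cap\Gamma K)$, produced from \cref{uni1} and \cref{uni2}, together with \cref{compare} and \cref{compare1}, factors the relevant inclusion through the Rips complex of a bounded geometry subspace $\coprod_i S_i$ with FDC. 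Thus the problem is reduced to an $L$-theoretic analogue of \cref{vanishing}, i.e.\ to
\[
\colim_s \pi_n \bbL^{\langle-\infty\rangle}\Bigl(\prod_{i\in I}^{bd}(\mcA_i)_c(P_s S_i)\Bigr)=0
\]
for any bounded geometry space with FDC. The proof of this vanishing in the $K$-theoretic case uses the squeezing/transfer argument of Ramras--Tessera--Yu; the same argument, combined with the standard fact that squeezing-type theorems pass from $\bbK$ to $\bbL^{\langle-\infty\rangle}$ once negative $K$-groups are controlled, gives the required $L$-theoretic statement.

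The main obstacle, and the place where the hypothesis on vanishing of negative $K$-groups really enters, is checking that the two key fibration sequences of spectra used throughout -- the ones coming from Karoubi filtrations $\mcU\to\mcA\to\mcA/\mcU$ and the one from the $\mcA_\Gamma(X)_0\to\mcA_\Gamma(X)\to\mcA_\Gamma(X)^\infty$ construction -- remain homotopy fibrations after applying $\bbL^{\langle-\infty\rangle}$ rather than $\bbK$. For $\bbK$ this is the theorem of Cardenas--Pedersen used implicitly in \cref{prop:5.8}; for $\bbL^{\langle-\infty\rangle}$ the corresponding statement is due to Ranicki and requires one to pass to the $\langle-\infty\rangle$ decoration, which is stable only if one knows the negative $K$-groups of the relevant coefficient categories vanish from some point on. This is precisely why the hypothesis ``$K_{-i}(\mcA[G])=0$ for $i\geq i_0$'' is imposed. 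Once these fibration sequences are in place, the rest of the proof is a line-by-line translation of the proof of \cref{main}.
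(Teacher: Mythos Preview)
Your overall strategy is exactly what the paper does: rerun the proof of \cref{main} with $\bbL^{\langle-\infty\rangle}$ in place of $\bbK$, and isolate the one step that needs an extra hypothesis. However, you have misidentified \emph{where} the vanishing of negative $K$-theory is actually used.

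You claim that the hypothesis $K_{-i}(\mcA[G])=0$ for $i\gg 0$ is needed so that Karoubi filtrations $\mcU\to\mcA\to\mcA/\mcU$ induce homotopy fibration sequences in $\bbL^{\langle-\infty\rangle}$. This is not correct: the decoration $\langle-\infty\rangle$ is designed precisely so that Karoubi filtrations always yield fibration sequences, with no assumption on negative $K$-groups (this is in \cite[Section~4]{MR1341817}). Consequently the three rows of the descent diagram, the sequences coming from \cref{prop:5.8}, and the $\mcA_\Gamma(X)_0\to\mcA_\Gamma(X)\to\mcA_\Gamma(X)^\infty$ sequence are all fibrations in $\bbL^{\langle-\infty\rangle}$ for free; likewise the $L$-theoretic analogue of \cref{vanishing} goes through without this hypothesis.

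The place where the hypothesis genuinely enters is the $L$-theoretic analogue of \cite[Theorem~6.3]{rosenthal}, i.e.\ the statement that the composite $g\circ f$ in the proof of \cref{descent} is a weak equivalence. That result, which for $K$-theory ultimately rests on Carlsson's theorem that $\bbK$ commutes with products, does not carry over to $\bbL^{\langle-\infty\rangle}$ in general; one needs the vanishing of $K_{-i}(\mcA[G])$ for large $i$ to obtain it. So the extra assumption lives entirely inside the Descent Principle, not in the Karoubi-filtration machinery or in the Rips-complex vanishing. Once you relocate the hypothesis there, your outline matches the paper's proof.
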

\begin{proof}
Everything we did works for $L$-theory as it works for $K$-theory with exception of the \hyperref[descent]{Descent Principle \ref*{descent}}. Here the additional assumption about the vanishing of $K_{-i}(\mcA[G])$ for large $i$ is needed because only then the $L$-theoretic analogue of \cite[Theorem 6.3]{rosenthal} holds. For more details on this see \cite[Section 4]{MR1341817}.
\end{proof}
\bibliographystyle{amsalpha}
\bibliography{fqFDC}

\providecommand{\bysame}{\leavevmode\hbox to3em{\hrulefill}\thinspace}
\providecommand{\MR}{\relax\ifhmode\unskip\space\fi MR }
\providecommand{\MRhref}[2]{%
  \href{http://www.ams.org/mathscinet-getitem?mr=#1}{#2}
}
\providecommand{\href}[2]{#2}
\begin{thebibliography}{ACFP94}

\bibitem[ACFP94]{MR1277522}
Douglas~R. Anderson, Francis~X. Connolly, Steven~C. Ferry, and Erik~K.
  Pedersen, \emph{Algebraic {$K$}-theory with continuous control at infinity},
  J. Pure Appl. Algebra \textbf{94} (1994), no.~1, 25--47. \MR{1277522
  (95b:19003)}

\bibitem[AS81]{alperin}
Roger~C. Alperin and Peter~B. Shalen, \emph{Linear groups of finite
  cohomological dimension}, Bull. Amer. Math. Soc. (N.S.) \textbf{4} (1981),
  no.~3, 339--341. \MR{609046 (82c:20087)}

\bibitem[Bar03]{squeezing}
Arthur~C. Bartels, \emph{Squeezing and higher algebraic {$K$}-theory},
  $K$-Theory \textbf{28} (2003), no.~1, 19--37. \MR{1988817 (2004f:19006)}

\bibitem[BK72]{bk}
A.~K. Bousfield and D.~M. Kan, \emph{Homotopy limits, completions and
  localizations}, Lecture Notes in Mathematics, Vol. 304, Springer-Verlag,
  Berlin, 1972. \MR{0365573 (51 \#1825)}

\bibitem[BR07a]{MR2294225}
Arthur Bartels and Holger Reich, \emph{Coefficients for the {F}arrell-{J}ones
  conjecture}, Adv. Math. \textbf{209} (2007), no.~1, 337--362. \MR{2294225
  (2008a:19002)}

\bibitem[BR07b]{bartels}
Arthur Bartels and David Rosenthal, \emph{On the {$K$}-theory of groups with
  finite asymptotic dimension}, J. Reine Angew. Math. \textbf{612} (2007),
  35--57. \MR{2364073 (2009a:19004)}

\bibitem[Car95]{MR1351941}
Gunnar Carlsson, \emph{On the algebraic {$K$}-theory of infinite product
  categories}, $K$-Theory \textbf{9} (1995), no.~4, 305--322. \MR{1351941
  (96m:19008)}

\bibitem[CH69]{MR0242151}
E.~H. Connell and John Hollingsworth, \emph{Geometric groups and {W}hitehead
  torsion}, Trans. Amer. Math. Soc. \textbf{140} (1969), 161--181. \MR{0242151
  (39 \#3485)}

\bibitem[CP95]{MR1341817}
Gunnar Carlsson and Erik~Kj{\ae}r Pedersen, \emph{Controlled algebra and the
  {N}ovikov conjectures for {$K$}- and {$L$}-theory}, Topology \textbf{34}
  (1995), no.~3, 731--758. \MR{1341817 (96f:19006)}

\bibitem[DL98]{davislueck}
James~F. Davis and Wolfgang L{\"u}ck, \emph{Spaces over a category and assembly
  maps in isomorphism conjectures in {$K$}- and {$L$}-theory}, $K$-Theory
  \textbf{15} (1998), no.~3, 201--252. \MR{1659969 (99m:55004)}

\bibitem[DP]{poschar}
Dieter Degrijse and Nansen Petrosyan, \emph{{Bredon cohomological dimensions
  for groups acting on {CAT(0)}-spaces}}, arXiv:1208.3884, to appear in Groups,
  Geometry, and Dynamics.

\bibitem[Gro93]{gromov}
M.~Gromov, \emph{Asymptotic invariants of infinite groups}, Geometric group
  theory, {V}ol.\ 2 ({S}ussex, 1991), London Math. Soc. Lecture Note Ser., vol.
  182, Cambridge Univ. Press, Cambridge, 1993, pp.~1--295. \MR{1253544
  (95m:20041)}

\bibitem[GTY12]{rigidity}
Erik Guentner, Romain Tessera, and Guoliang Yu, \emph{A notion of geometric
  complexity and its application to topological rigidity}, Invent. Math.
  \textbf{189} (2012), no.~2, 315--357. \MR{2947546}

\bibitem[GTY13]{fdc}
\bysame, \emph{Discrete groups with finite decomposition complexity}, Groups
  Geom. Dyn. \textbf{7} (2013), no.~2, 377--402. \MR{3054574}

\bibitem[Hat02]{hatcher}
Allen Hatcher, \emph{Algebraic topology}, Cambridge University Press,
  Cambridge, 2002. \MR{1867354 (2002k:55001)}

\bibitem[Kar70]{karoubi}
Max Karoubi, \emph{Foncteurs d\'eriv\'es et {$K$}-th\'eorie}, S\'eminaire
  {H}eidelberg-{S}aarbr\"ucken-{S}trasbourg sur la {K}th\'eorie (1967/68),
  Lecture Notes in Mathematics, Vol. 136, Springer, Berlin, 1970, pp.~107--186.
  \MR{0265435 (42 \#344)}

\bibitem[L{\"u}c00]{MR1757730}
Wolfgang L{\"u}ck, \emph{The type of the classifying space for a family of
  subgroups}, J. Pure Appl. Algebra \textbf{149} (2000), no.~2, 177--203.
  \MR{1757730 (2001i:55018)}

\bibitem[Mis01]{mislin}
Guido Mislin, \emph{On the classifying space for proper actions}, Cohomological
  methods in homotopy theory ({B}ellaterra, 1998), Progr. Math., vol. 196,
  Birkh\"auser, Basel, 2001, pp.~263--269. \MR{1851258 (2002f:55032)}

\bibitem[PW85]{MR802790}
Erik~K. Pedersen and Charles~A. Weibel, \emph{A nonconnective delooping of
  algebraic {$K$}-theory}, Algebraic and geometric topology ({N}ew {B}runswick,
  {N}.{J}., 1983), Lecture Notes in Math., vol. 1126, Springer, Berlin, 1985,
  pp.~166--181. \MR{802790 (87b:18012)}

\bibitem[Qui79]{MR549490}
Frank Quinn, \emph{Ends of maps. {I}}, Ann. of Math. (2) \textbf{110} (1979),
  no.~2, 275--331. \MR{549490 (82k:57009)}

\bibitem[Qui82]{MR669423}
\bysame, \emph{Ends of maps. {II}}, Invent. Math. \textbf{68} (1982), no.~3,
  353--424. \MR{669423 (84j:57011)}

\bibitem[Roe03]{roe}
John Roe, \emph{Lectures on coarse geometry}, University Lecture Series,
  vol.~31, American Mathematical Society, Providence, RI, 2003. \MR{2007488
  (2004g:53050)}

\bibitem[Ros04]{rosenthal}
David Rosenthal, \emph{Splitting with continuous control in algebraic
  {$K$}-theory}, $K$-Theory \textbf{32} (2004), no.~2, 139--166. \MR{2083578
  (2005g:19003)}

\bibitem[RTY14]{k-theory}
Daniel~A. Ramras, Romain Tessera, and Guoliang Yu, \emph{Finite decomposition
  complexity and the integral {N}ovikov conjecture for higher algebraic
  {$K$}-theory}, J. Reine Angew. Math. \textbf{694} (2014), 129--178.
  \MR{3259041}

\bibitem[Sch04]{MR2079996}
Marco Schlichting, \emph{Delooping the {$K$}-theory of exact categories},
  Topology \textbf{43} (2004), no.~5, 1089--1103. \MR{2079996 (2005k:18023)}

\bibitem[Sel60]{selberg}
Atle Selberg, \emph{On discontinuous groups in higher-dimensional symmetric
  spaces}, Contributions to function theory (internat. {C}olloq. {F}unction
  {T}heory, {B}ombay, 1960), Tata Institute of Fundamental Research, Bombay,
  1960, pp.~147--164. \MR{0130324 (24 \#A188)}

\bibitem[tD87]{tomdieck}
Tammo tom Dieck, \emph{Transformation groups}, de Gruyter Studies in
  Mathematics, vol.~8, Walter de Gruyter \& Co., Berlin, 1987. \MR{889050
  (89c:57048)}

\end{thebibliography}
\end{document}